\numberwithin{equation}{section}
\journal{Journal of \LaTeX\ Templates}
\newtheorem{theorem}{Theorem}[section]
\newtheorem{lemma}{Lemma}[section]
\newtheorem{property}[theorem]{Property}
\newtheorem{rem}{Remark}[section]
\newtheorem{thm}{Theorem}[section]
\newtheorem{lem}{Lemma}[section]
\newtheorem{prop}{Proposition}[section]
\newtheorem{definition}{Definition}[section]
\def\<{\left\langle} \def\>{\right\rangle}
\def\\left({\left\left(} \def\\right){\right\right)}
\begin{document}
	
	\begin{frontmatter}

		\title{	{Dynamics and spreading speeds of a   
		nonlocal diffusion model with advection and free boundaries }
			 \tnoteref{mytitlenote}}

		\author{Chengcheng Cheng}
		\ead{chengchengcheng@amss.ac.cn}

		\address{School of Mathematical Sciences, 
			Laboratory 
			of 
			Mathematics 
			and 
			Complex Systems, Ministry of Education, Beijing Normal University, Beijing 
			100875, China}

	
\begin{abstract}
In this paper, we investigate a Fisher-KPP nonlocal diffusion model incorporating the effect of advection and free boundaries, aiming to explore the propagation dynamics of the nonlocal diffusion-advection model. Considering the effects of the advection, the existence, uniqueness, and regularity of the global solution are obtained.  We introduce the principal eigenvalue of the nonlocal operator with the advection term and discuss the asymptotic properties influencing the long-time behaviors of the solution for this model. Moreover, we give several sufficient conditions determining the occurrences of spreading or vanishing and obtain the spreading-vanishing dichotomy. Most of all, applying the semi-wave solution and constructing the upper and the lower solution,  we give an explicit description of the finite asymptotic spreading speeds for the double free boundaries on the effects of the nonlocal diffusion and advection compared with the corresponding problem without an advection term.
\end{abstract}
	\begin{keyword}
				{Nonlocal diffusion\sep Advection\sep 
			Free boundary\sep 
		Principal eigenvalue\sep  Asymptotic 
		behavior\sep Spreading speed
		\MSC[2010] 35R35\sep35B40\sep 35R09\sep35K57}
	\end{keyword}
\end{frontmatter}

\section{Introduction}\label{s1}
In the past years, the nonlocal diffusion equation has been recognized to better describe the long-distance dispersal of species and propagation of epidemics,  such as \cite{furter1989local, andreu2010nonlocal, kao2010random, massaccesi2017nonlocal} and so on. The researches on nonlocal problems have attracted widespread attention.  Berestycki et al.~\cite{berestycki2009non}  considered the Fisher-KPP equation with a nonlocal saturation effect and mainly studied the existence of the steady
state and the traveling waves. Later, the persistence criteria for populations with nonlocal
diffusion were analyzed in \cite{berestycki2016persistence}. In the natural environment, the migrations of the species and epidemic spreading usually change with time. 
The free boundary used to describe the migration and spreading frontiers is more reasonable for dynamical studies in reality.   The investigations of the nonlocal diffusion model with free boundaries met great developments (see \cite{zhao2020dynamics, du2020analysis, li2020systems, pu2021west, li2022free, du2022two, li2022dynamics, du2022high} and the references therein).

The advection movements (e.g., the wind direction and the water flow) play a significant role in epidemic dispersal \cite{gu2014long, ge2015sis, cui2016spatial,  tian2018, cheng2021dynamics} and species survival in the river environment \cite{lutscher2005effect, lutscher2006effects, JIANG2021103350, yan2022competition}. Especially, Maidana and Yang found out that the West Nile virus spread from New York to California, which was recognized as a consequence of the diffusion and advection movements of birds~\cite{maidana2009spatial}. However, few studies have formally explored the important effects of the advection on nonlocal diffusion problems over the past decades.
It is increasingly reasonable to introduce the advection to study the propagation dynamics of the nonlocal diffusion model. As is well known,
the spreading speed is one of the most concernings in the research for epidemic propagation. 
The classical work on studying the spreading speed of the reaction-diffusion equation  is that Fisher~\cite{fisher1937wave} and Kolmogoroff, Petrovsky,
and Piscounoff \cite{kolmogorov1937etude} in 1937 considered the following reaction-diffusion equation
\begin{equation}\label{ff}
\begin{cases}
u_t=u_{xx}+f(u),&t>0, ~x\in\mathbb R,\\
u(0,x)=u_0(x),&x\in \mathbb R.
\end{cases}
\end{equation}
They proved the existence of
 minimal wave speed $c_0^*>0$ such that $(\ref{ff})$ possessed a  traveling
wave solution  $$u(x,t)=\omega (x-ct) \text{ if and only if } c>c_0^*.$$
Weinberger, Lewis, and their collaborators investigated the traveling waves and spreading speeds of cooperative models \cite{li2005spreading, weinberger2007anomalous}  and competition models \cite{lewis2002spreading} in a series of meaningful works.  Recently, Liang and Zhou~\cite{liang2022propagation} discussed the spreading speed of the positive and negative directions for local KPP equations with advection in an almost periodic environment.  For the nonlocal Fisher-KPP model with free boundaries,  Du et al. in \cite{du2021semi} first investigated the boundary spreading speeds by applying the semi-wave and traveling wave and they found the threshold condition on the kernel function which determines when the spreading speed is finite.  A more interesting question that attracts us is the difference in the asymptotic spreading speed between the leftward front and the rightward front when the spreading occurs.

Many of these previous studies have made great progress, whereas the dynamics and spreading speeds for the nonlocal diffusion problem with the advection and free boundaries have not been systematically explored yet.  In this paper, we first investigate the propagation dynamics and spreading speeds of the following nonlocal diffusion problem with the advection term and free boundaries.
\begin{equation}\label{eq1-1}
	\left\{\begin{array}{ll}
		u_{ t}=d \int_{g\left(t\right)}^{h\left(t\right)} 
		J\left(x-y\right) u\left(t, y\right) \mathrm{d} y-d~ 
		u-\nu u_{x}+f(t,x,u), & t>0,~
		g\left(t\right)<x<h\left(t\right), \\
		h^{\prime}\left(t\right)= \mu
		\int_{g\left(t\right)}^{h\left(t\right)} 
		\int_{h\left(t\right)}^{\infty} J\left(x-y\right) u\left(t, 
		x\right) \mathrm{d} y\mathrm{d} x, 
		& t 
		\geq 0, \\
		g^{\prime}\left(t\right)=-\mu
		\int_{g\left(t\right)}^{h\left(t\right)} 
		\int_{-\infty}^{g\left(t\right)} J\left(x-y\right) u\left(t, 
		x\right) \mathrm{d} y\mathrm{d} 
		x, & 
		t \geq 0,\\
		u\left(t, g\left(t\right)\right)=u\left(t, 
		h\left(t\right)\right)=0, & t \geq 0, \\
		 u\left(0, x\right)=u_{ 0}\left(x\right),
		 & x\in[-h_{0},h_0],\\
   h\left(0\right)=h_{0},~g\left(0\right)=-h_{0}.
	\end{array}\right.
\end{equation}
The model $(\ref{eq1-1})$ can be seen as a nonlocal extension for the diffusion-advection model with free boundaries in~\cite{gu2014long}.    
Here, $u(t, x)$ represents the population density; $d$ is the diffusion rate; $\nu$ represents the advection rate; $\mu$ denotes the boundary expanding ability. 
$g(t)$ and $h(t)$ denote the leftward spreading front and rightward spreading front, respectively. 
 $\int_{g\left(t\right)}^{h\left(t\right)} 
		\int_{-\infty}^{g\left(t\right)} J\left(x-y\right) u\left(t, 
		x\right) \mathrm{d} y\mathrm{d} 
		x$ and $\int_{g\left(t\right)}^{h\left(t\right)} 
		\int_{h\left(t\right)}^{\infty} J\left(x-y\right) u\left(t, 
		x\right) \mathrm{d} y\mathrm{d} x$  are the outward flux of the double fronts.
We always assume that $u(t, x)=0, \text{ for } x\notin [g(t), h(t)],$ and the boundary expanding ratio is proportional to the outward flux.

Further, assume that the initial function $u_0(x)$ satisfies 
\begin{equation}\label{eq1-03}
	u_{0}(x) \in C^1\left(\left[-h_{0}, h_{0}\right]\right),~ u_{0}\left(-h_{0}\right)=u_{0}\left(h_{0}\right)=0,~ u_{0}(x)>0,~ x\in\left(-h_{0}, h_{0}\right).
\end{equation}

The kernel function $J(x): \mathbb R\rightarrow \mathbb R$ satisfies the following conditions.
\newline $(\mathbf{J})$  $J \in C^1(\mathbb{R}) \cap L^{\infty}(\mathbb{R}), $    $J \geq 
0,$   $J(x)=J(-x),$  $ 
J(0)>0,$ and $\int_{\mathbb{R}} J(x) d x=1.$

Assumptions on the reaction term $f: \mathbb{R}^{+} \times\mathbb{R}\times\mathbb{R}^{+}\rightarrow \mathbb{R}.$
\newline $(\textbf{f1})$ $f(t,x,0)=0$ and $f(t,x,u)$ is locally Lipschitz continuous in $u \in 
\mathbb{R}^{+}$. That is, for any $U^*>0$, there is a constant $k=k(U^*)>0$ such 
that
$
\left|f\left(t,x,u_{1}\right)-f\left(t,x,u_{2}\right)\right| \leq 
k\left|u_{1}-u_{2}\right| \text { for } u_{1}, u_{2} \in[0, U^*].
$
\newline $(\textbf{f2})$ There is $k_{0}>0$ such that $f(t,x,u)<0$ for $u \geq 
k_{0}$.
\newline$(\textbf{f3})$ For any given $\tau,$ $l,$ $u^*>0$, there is a constant $\tilde {k}\left(\tau, l, u^*\right)$ and $\alpha\in(0,1)$ such that
$$
\left\|f_{}(\cdot, x, u)\right\|_{C^{\alpha}([0, \tau])} \leq \tilde {k}\left(\tau, l, u^*\right)
$$
~~~~~~for  $x \in[-l, l], ~u \in\left[0, u^*\right]$.

 Assume that $f$ is of Fisher-KPP type as exploring the long-time asymptotic behaviors,  that is, the following additional conditions for  $f$ hold.
\newline$(\textbf{f4})$ 
$f =f(u) \in  C^1$ independent of $(t,x)$.  $f(0)=f(1)=0<f(x), \text{ for } x\in (0,1).$ 
\newline $\qquad~~~~~~~ f^{\prime}(0):=f_0>0>f^{\prime}(1)$,  and $\dfrac{f(u)}{u}$ is nonincreasing in 
$u>0$.

Additionally, considering the small advection rate and the spreading propagation criterion in Section \ref{s5}, we always assume that 
\begin{equation}\label{eq1-2-2-2}
0<\nu<\tilde c,
\end{equation}
where 
$\tilde c$ is exactly the minimal spreading speed 
of 
the traveling wave solution of the following equation
\begin{equation}\label{eq1-2004}
	\begin{cases}
		u_{t}=d \int_{\mathbb{R}} J\left(x-y\right) u(t, y){\rm d} y-d~ u(t, x)
		+f(t,x, u), & t>0,~x 
		\in \mathbb{R}, 
		\\ u(0, x)=u_{0}(x), & x \in \mathbb{R}.
	\end{cases}
\end{equation}

 In this paper, we mainly investigate the propagation dynamics and the spreading speed of a nonlocal diffusion model $(\ref{eq1-1})$ with the advection and free boundaries which are firstly systematically discussed. 
The existence, uniqueness, and regularity, the long-time dynamical behavior, and the asymptotic spreading speeds for $(\ref{eq1-1})$ are investigated.  In general, the solution $u(t, x )$ for a nonlocal model may not be differentiable in $x$. In our model, considering the effect of the advection described by a gradient term, we can obtain the regularity of the solution $u(t, x)$ in $x$. Further, we define the corresponding principal eigenvalue with the advection term and show that the advection can influence the spreading or vanishing by affecting the value of the principal eigenvalue. Especially, the spreading speed plays an important role in predicting the epidemic propagation speed and scale. Under the effects of advection and nonlocal diffusion, we construct sharp estimates of the spreading speeds of the double free boundaries. We show that the leftward front and the rightward front move at different spreading speeds compared with the non-advection case. 

The remainder of the paper is organized as follows.  In Section \ref{s2},  based on the contraction mapping theorem and the associated Maximum principle, we prove that $(\ref{eq1-1})$ admits a
unique global solution defined for all $t > 0$.  In Section \ref{s3}, we define the principal eigenvalue associated with the corresponding nonlocal operators and then obtain the asymptotic properties.  In Section \ref{s4}, under the conditions $(\mathbf{J})$ and $(\mathbf{f1})-(\mathbf{f4})$, we study the long-time dynamical behavior of the solution to $(\ref{eq1-1})$, and obtain the spreading-vanishing dichotomy regimes. In Section \ref{s5}, we investigate the effects of the advection on the spreading speeds and get sharp estimates by constructing the upper solution and the lower solution and applying the corresponding semi-wave solution.
\section{Existence and uniqueness}\label{s2}
For given $h_0>0,$ and any $0<T<\infty,$  denote 
\begin{equation} 
   \begin{aligned}
&\mathscr D_{h_0}:=[0, T] \times\left[-h_{0}, h_{0}\right],  
\\&\mathbb{H}_{h_0, T}:=\left\{h \in C([0, T])\mid h(0)=h_0, \inf _{0 \leq t_1<t_2 \leq T} \frac{h\left(t_2\right)-h\left(t_1\right)}{t_2-t_1}>0\right\},
\\&\mathbb{G}_{h_0, T}:=\left\{g \in C([0, T])\mid -g \in \mathbb{H}_{h_0, T}\right\},
\\&\mathcal D_{T}:=D_{g, h}^{T}=\left\{(t, x) \in \mathbb{R}^{2}\mid 0<t \leq T, ~g(t)<x<h(t)\right\}.
\end{aligned}
\end{equation}
Now we present the existence and uniqueness of the solution for $(\ref{eq1-1})$ in this section.
\begin{theorem}\label{th2-1}
Assume that $(\mathbf{J})$  and $(\mathbf{f1})-(\mathbf{f3})$ hold, for any initial function satisfies $(\ref{eq1-03})$ and for  $T\in(0,\infty)$, the problem $(\ref{eq1-1})$ admits a unique  solution 
$$(u,g,h)\in C^{1+\alpha, 1}(\overline D^T_{g,h})\times C^{1+\alpha}([0,T])\times C^{1+\alpha}([0,T]).$$
\end{theorem}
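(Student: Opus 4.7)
The plan is a Banach fixed-point argument producing a local-in-time solution, followed by extension to arbitrary $T<\infty$ via uniform a priori bounds; the stated regularity is then read off the characteristic representation used in the fixed point. For a small $T_0\in(0,T]$ and a constant $K$ to be determined, I would work on a closed, Lipschitz-bounded subset $\mathcal{X}_{T_0,K}\subset C([0,T_0])\times C([0,T_0])$ of admissible pairs $(g,h)$ with $g(0)=-h_0$, $h(0)=h_0$, and $\|g'\|_\infty,\|h'\|_\infty\leq K$, equipped with the uniform metric.

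The decisive step exploits the transport structure produced by the advection. For $(g,h)\in\mathcal{X}_{T_0,K}$, writing the first equation of~(\ref{eq1-1}) as $\partial_t u+\nu\partial_x u=G[u](t,x)$ with
$$
G[u](t,x) := d\int_{g(t)}^{h(t)} J(x-y)\,u(t,y)\,dy - d\,u(t,x) + f(t,x,u),
$$
and extending $u$ by zero outside $[g(t),h(t)]$, integration along the characteristics $x=z+\nu t$ yields the Duhamel representation
$$
u(t,z+\nu t) = u_0(z)\mathbf{1}_{[-h_0,h_0]}(z) + \int_0^t G[u](s,z+\nu s)\,ds.
$$
Thanks to $(\mathbf{f1})$ and the $L^\infty$ barrier $M:=\max(\|u_0\|_\infty,k_0)$ furnished by $(\mathbf{f2})$, the right-hand side defines a contraction $\mathcal{T}[g,h]$ on $\{u\in C(\overline{\mathcal{D}_{T_0}}):0\leq u\leq M\}$ for $T_0$ small, whose unique fixed point $u[g,h]$ inherits $C^1$ regularity in $x$ directly from $u_0, J, f$: the transport operator turns spatial smoothness into a parameter dependence along characteristics. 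This is the crucial payoff of the advection.

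Substituting $u[g,h]$ into the free-boundary equations defines $\Gamma(g,h):=(\tilde g,\tilde h)$ via
$$
\tilde h(t) := h_0+\mu\int_0^t\!\int_{g(s)}^{h(s)}\!\int_{h(s)}^\infty J(x-y)\,u[g,h](s,x)\,dy\,dx\,ds
$$
and the analogous expression for $\tilde g$. Gronwall-type estimates on the Duhamel representation give $\|u[g_1,h_1]-u[g_2,h_2]\|_\infty\leq C(T_0)(\|g_1-g_2\|_\infty+\|h_1-h_2\|_\infty)$, from which one checks that $\Gamma$ maps $\mathcal{X}_{T_0,K}$ into itself once $K$ is chosen large enough (depending only on $\mu,M,h_0,J$) and is a contraction in the uniform norm after further shrinking $T_0$. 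Its fixed point is the unique local solution in the class.

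For global extension, the barrier $M$ persists on any existence interval by the comparison principle built on $(\mathbf{f2})$; then $h'(t)\leq\mu M(h(t)-g(t))$ together with the symmetric bound for $-g'$ forces at most exponential growth of $h-g$, so no finite-time breakdown occurs and iterating local existence gives a solution on $[0,T]$. The stated $C^{1+\alpha,1}$ regularity is read off the characteristic ODE: $(\mathbf{f3})$ supplies H\"older-in-$t$ control of $G[u]$, upgrading $u_t$ to $C^\alpha$ in $t$, while differentiating the Duhamel formula in the initial position $z$ (using $J\in C^1$ and $u_0\in C^1$) yields continuity of $u_x$; the boundary integrals then inherit $C^\alpha$ in $t$, giving $g,h\in C^{1+\alpha}([0,T])$. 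The main obstacle is to keep the contraction estimate for $\Gamma$ honest despite the fact that the comparison domain shifts with $(g,h)$; this is handled by comparing $u[g_1,h_1]$ and $u[g_2,h_2]$ on the common cylinder $[-h_0-Kt,h_0+Kt]\times[0,T_0]$ after zero extension, where the uniform characteristic slope $\nu$ keeps the bookkeeping manageable.
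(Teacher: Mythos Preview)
Your route is genuinely different from the paper's. The paper straightens the moving domain via $y=\frac{2x}{h(t)-g(t)}-\frac{h(t)+g(t)}{h(t)-g(t)}$, obtaining a problem for $w(t,y)$ on the fixed cylinder $[0,T]\times[-1,1]$ with $w(t,\pm1)=0$ and variable transport coefficient $(\nu\mathscr A+\mathscr B)$; it then cites $L^p$ theory and Sobolev embedding for existence of $w\in C^{1+\alpha,1}$, and runs the contraction on pairs $(g,h)\in C^1([0,T])$ with $\|h'-h_*\|,\|g'-g_*\|$ small. You instead keep the moving domain and exploit the constant-speed characteristics of the advection.

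The characteristic idea is attractive, but as written there is a real gap in the handling of the Dirichlet conditions $u(t,g(t))=u(t,h(t))=0$. Your Duhamel formula
\[
u(t,z+\nu t)=u_0(z)\mathbf 1_{[-h_0,h_0]}(z)+\int_0^t G[u](s,z+\nu s)\,ds
\]
is the solution of the Cauchy problem on $\mathbb R$ for the zero-extended data, not of the initial--boundary-value problem on $D_{g,h}^T$. Since $g'<0<\nu$, every characteristic with foot $z<-h_0$ enters the domain through $x=g(\cdot)$ at some $t_0>0$, and the correct representation integrates $G[u]$ from $t_0$ (with inflow value $0$), not from $0$; on the exterior segment $s\in(0,t_0)$ the source $G[u](s,z+\nu s)=d\int_{g(s)}^{h(s)}J(z+\nu s-y)u(s,y)\,dy$ is generically positive, so your fixed point will not satisfy $u(t,g(t))=0$. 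The right boundary is worse: whether $x=h(t)$ is inflow or outflow depends on the sign of $h'(t)-\nu$, and for small $\mu$ (hence small $h'$) it is outflow, in which case $u(t,h(t))$ is determined by the equation and imposing $u(t,h(t))=0$ over-determines the sub-problem ``solve for $u$ given $(g,h)$''. Differentiating $u(t,h(t))=0$ along the boundary and using the equation gives $(h'(t)-\nu)\,u_x(t,h(t))=-d\int_{g(t)}^{h(t)}J(h(t)-y)u(t,y)\,dy<0$, which forces $u_x(t,h(t))>0$ whenever $h'(t)<\nu$, contradicting $u>0$ in the interior with $u=0$ at $h(t)$; so for generic $(g,h)\in\mathcal X_{T_0,K}$ no $u[g,h]$ with the required boundary values exists, and the map $\Gamma$ is not well defined on your space. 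The paper sidesteps this by working on the fixed interval and deferring to references; to repair your scheme you would have to either build the contraction on a class where both boundaries are guaranteed inflow, or abandon the decoupled ``first solve for $u$, then update $(g,h)$'' structure.
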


We first show the following  Maximum principle, which plays a vital part in proving the positivity of $u(t, x).$
\begin{lemma} [Maximum principle]\label{lemm2.1} 
 Assume that ${(\mathbf J)}$ holds, and $g \in \mathbb{G}_{h_{0}, T}, $ $h \in \mathbb{H}_{h_{0}, T}$ for some $h_{0},$ $T>0$. Suppose that  
 $ u_x$, $ u_t \in C (\overline D_{{g}, {h}}^T )$  and  $c(t,x) \in L^{\infty}(D_{g, h}^{T})$, satisfy that
\begin{equation}\label{eq3-8}
\begin{cases}u_{t}(t, x) \geq d \int_{g(t)}^{h(t)} J\left(x-y\right) u(t, y) {\rm d} y-d ~u-\nu u_x+c(t, x) u, & t \in(0, T], ~x \in(g(t), h(t)), \\ u(t, g(t)) \geq 0, ~u(t, h(t)) \geq 0, & t>0, \\ u(0, x) \geq 0, & x \in[-h_0,h_0],
\end{cases}
\end{equation}
Then $u(t, x) \geq 0$ for all $0 \leq t \leq T$ and $x \in[g(t), h(t)]$. Moreover, if  $u(0, x) \not \equiv 0$ in $\left[-h_{0}, h_{0}\right]$, it follows that $u(t, x)>0$ in $\mathcal D_{T}$.
\end{lemma}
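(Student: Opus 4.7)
The plan is to prove the two assertions in turn: nonnegativity of $u$ by a perturbation-plus-first-touching-time argument adapted to the nonlocal-advection setting, and then strict positivity by running the characteristic ODE for the advective part and picking up a contribution from the nonlocal source.

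For the nonnegativity, I would introduce the auxiliary function $v(t,x):=u(t,x)+\delta e^{Kt}$ with $\delta>0$ arbitrary and $K>\|c\|_{L^{\infty}(D_{g,h}^T)}$. Substituting $u=v-\delta e^{Kt}$ into the hypothesis, using $(\delta e^{Kt})_x=0$ and $\int_{g(t)}^{h(t)} J(x-y)\,dy\le\int_{\mathbb{R}}J(z)\,dz=1$, yields
\[
v_t - d\int_{g(t)}^{h(t)} J(x-y)\,v(t,y)\,dy + d v + \nu v_x - c\,v \;\ge\; \delta e^{Kt}(K-c)\;>\;0
\]
on $D_{g,h}^T$. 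Since $v(0,\cdot)\ge\delta>0$ on $[-h_0,h_0]$ and $v(t,g(t)),\,v(t,h(t))\ge\delta e^{Kt}>0$, any vanishing of $v$ would occur first at some $(t_1,x_1)$ with $t_1\in(0,T]$ and $x_1\in(g(t_1),h(t_1))$, with $v\ge 0$ on $[0,t_1]$. The regularity hypothesis $u_t,u_x\in C(\overline D_{g,h}^T)$ delivers $v_t(t_1,x_1)\le 0$ and $v_x(t_1,x_1)=0$, while $v(t_1,\cdot)\ge 0$ makes $\int J(x_1-y)v(t_1,y)\,dy\ge 0$; feeding these values into the strict inequality above gives $0\ge v_t(t_1,x_1)-d\!\int J(x_1-y)v(t_1,y)\,dy>0$, a contradiction. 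Letting $\delta\to 0^+$ yields $u\ge 0$.

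For the strict positivity under $u_0\not\equiv 0$, the already obtained $u\ge 0$ makes the nonlocal integral nonnegative, so the hypothesis reduces to the first-order inequality
\[
u_t + \nu u_x + (d-c)u \;\ge\; d\int_{g(t)}^{h(t)} J(x-y)u(t,y)\,dy \;\ge\; 0.
\]
Along the characteristic $X(s)=x_0+\nu s$, the integrating factor $\exp\bigl(\int_0^s(d-c)\,d\tau\bigr)$ converts this into the Duhamel-type lower bound
\[
u(t,X(t)) \;\ge\; e^{-\int_0^t(d-c)\,ds}\,u_0(x_0) + d\int_0^t e^{-\int_\tau^t(d-c)\,ds}\!\int_{g(\tau)}^{h(\tau)} J(X(\tau)-y)\,u(\tau,y)\,dy\,d\tau,
\]
valid while the characteristic remains inside $\overline D_{g,h}^T$. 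The main obstacle is to arrange strict positivity of at least one term on the right at every target $(t,x)\in\mathcal{D}_T$. Continuity of $u_0$ together with $u_0\not\equiv 0$ supplies an open subinterval $(a,b)\subset(-h_0,h_0)$ on which $u_0>0$, so the first Duhamel term is already strictly positive along the forward characteristic image of $(a,b)$. Then, using $J(0)>0$ and continuity of $J$ to obtain $J>0$ on some $(-r,r)$, the nonlocal integral becomes strictly positive at every point of $\mathcal{D}_T$ within distance $r$ of the already-positive set, expanding the positive region by one kernel radius per time slice; finitely many iterations cover all of $\mathcal{D}_T$. Characteristics that leave $\overline D_{g,h}^T$ through a lateral free boundary before reaching $t=0$ are handled by restarting the Duhamel bound from the time of first entry, where nonnegativity is already in hand and the same kernel-radius spreading argument applies.
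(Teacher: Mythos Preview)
Your proposal is correct, and both halves work. The approaches differ from the paper's in instructive ways.

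For nonnegativity, you use the additive perturbation $v=u+\delta e^{Kt}$ with a first-touching-time argument; the paper uses the multiplicative weight $w=e^{-\xi t}u$ and argues at a negative global minimum. These are interchangeable standard devices---yours produces a strict differential inequality directly, at the price of a final limit $\delta\to 0$. One point worth making explicit: to conclude $v_t(t_1,x_1)\le 0$ you need $x_1$ to lie in the domain for $t$ just below $t_1$. This holds because the domain is expanding: either $x_1\in[-h_0,h_0]$ and is in every slice, or $x_1$ entered through the lateral boundary at some $s_0\in(0,t_1)$, where $v(s_0,x_1)\ge\delta e^{Ks_0}>0$, and then $v(\cdot,x_1)>0$ on $[s_0,t_1)$ by the definition of $t_1$.

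For strict positivity you take a genuinely different route. The paper simply invokes the contradiction argument of Cao (2019): if $u(t_0,x_0)=0$ at an interior point, the inequality forces $\int J(x_0-y)u(t_0,y)\,dy=0$, hence $u(t_0,\cdot)\equiv 0$ on the whole slice by iterated kernel spreading, and one then pushes this zero slice back to $t=0$. Your forward Duhamel-along-characteristics argument is more constructive and makes the interplay of advection (transport along $x_0+\nu t$) and nonlocal spreading (width $r$ per iteration) visible. The phrase ``finitely many iterations cover all of $\mathcal D_T$'' is correct---each application of the Duhamel bound enlarges the base interval of positive characteristics by $r$, and the domain has width at most $h(T)-g(T)<\infty$---but to make it airtight you should either spell out that count (and the restart from the entry time when a backward characteristic exits laterally, which you correctly flag), or repackage it as the contradiction version, which sidesteps the characteristic bookkeeping.
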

\begin{proof}
Let $w(t, x)=e^{-\xi t} u(t, x)$ with  $\xi>\|c(t,x)\|_{L^{\infty}(\mathcal D_T)}.$
Then
\begin{equation}\label{eq3-10}
\begin{aligned}
&w_{t}(t,x) +\nu w_x(t,x) -d \int_{g(t)}^{h(t)} J\left(x-y\right) w(t, y) {\rm d } y+d w(t,x)
\\& \geq -\xi w(t,x)+c(t,x) w(t,x),  \text{ for } (t, x)\in \mathcal D_{T}.
\end{aligned}
\end{equation}

Now we wish to prove that $w \geq 0$ in $\mathcal D_{T}$. On the contrary, for $0<T^*\leq T$, assume that
$
\min\limits_{(t, x) \in   \overline {\mathcal  D}_{T^* } }w(t, x)<0.
$
According to $(\ref{eq3-8})$, $w \geq 0$ on the parabolic boundary of $\mathcal D_{T}$, and  then there exists $\left(t^{*}, x^{*}\right) \in \mathcal D_{T^*}$ such that 
$
w\left(t^{*}, x^{*}\right)=\min\limits_{(t, x) \in   \overline {\mathcal  D}_{T^* } }w(t, x).
$
 
 Thus, 
 \begin{equation}
\begin{aligned}
&0\geq w_{t}(t^*,x^*) +\nu w_x(t^*,x^*) -d \int_{g(t^*)}^{h(t^*)} J(x^*-y) w(t^*, y) {\rm d} y+d w(t^*,x^*)
\\& \geq -\xi w\left(t^{*}, x^{*}\right)+c(t^*,x^*) w\left(t^{*}, x^{*}\right),
\\&>0 , \text{ for } (t, x)\in \mathcal D_{T^*},
\end{aligned}
\end{equation}
 which yields a contradiction.
 It follows that $w(t, x) \geq 0$ and thus $u(t, x) \geq 0$ for all $(t, x) \in \mathcal D_{T^{*}}$.

If $T^{*}=T$, then $u(t, x) \geq 0$ in $\mathcal D_{T}$; while if $T^{*}<T$, we can finitely repeat this process with $u_{0}(x)$ replaced by $u\left(T_{*}, x\right)$ and $(0, T]$ replaced by $\left(T^{*}, T\right]$ to complete the proof.

If $u(0, x) \not \equiv 0, \text{ in } \left[-h_{0}, h_{0}\right],
$
 the remaining  only needs to show that $w>0$ in $\mathcal D_{T}$.
Motivated by the arguments in proving  Lemma~2.2~\cite{cao2019}, it is not difficult to complete the proof by contradiction.
\end{proof}

The following result can be proved by the arguments in the proof of {Maximum principle}. 
\begin{lemma}[Comparison criterion] \label{lemm2.2}
	 Assume that $(\mathbf{J})$ holds. For any $h_{0}>0,$  $T>0$, suppose that  $u_x(t,x)$,  $u_{t}(t, x)\in C(\mathscr D_{h_0})$, and  $c(t,x) \in L^{\infty}\left(\mathscr D_{h_0}\right)$, satisfy that
$$
\begin{cases}u_{t}(t, x) \geq d \int_{-h_{0}}^{h_{0}} J\left(x-y\right) u(t, y) {\rm d} y-d~ u-\nu  u_x+c(t, x) u, & t \in(0, T],~x \in\left[-h_{0}, h_{0}\right], \\ u(0, x) \geq 0, & x \in\left[-h_{0}, h_{0}\right] .
\end{cases}
$$
Then $u(t, x) \geq 0$ for all $(t, x) \in \mathscr D_{h_0}$. Moreover, if $u(0, x) \not \equiv 0$ in $\left[-h_{0}, h_{0}\right]$, then $u(t, x)>0$ in $(0, T] \times\left[-h_{0}, h_{0}\right]$.
\end{lemma}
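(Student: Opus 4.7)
My plan is to mirror the exponential-shift argument from the proof of the Maximum principle (Lemma~2.1), now on the fixed spatial interval $[-h_0,h_0]$ in place of the moving interval $(g(t),h(t))$. First I would set $w(t,x)=e^{-\xi t}u(t,x)$ with $\xi>\|c\|_{L^\infty(\mathscr D_{h_0})}$, so that the hypothesis transforms into
$$w_t+\nu w_x+dw-d\int_{-h_0}^{h_0} J(x-y)w(t,y)\,dy+(\xi-c)w\geq 0 \quad \text{on } \mathscr D_{h_0},$$
together with $w(0,\cdot)\geq 0$ on $[-h_0,h_0]$.

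Next I would argue by contradiction: assume $w^*:=\min_{\mathscr D_{h_0}}w<0$ is attained at some $(t^*,x^*)$; the nonnegativity of the initial data forces $t^*>0$. Using $w\geq w^*$ and $\int_{-h_0}^{h_0}J(x^*-y)\,dy\leq 1$, one obtains $dw^*-d\int J(x^*-y)w(t^*,y)\,dy\leq dw^*(1-\int J\,dy)\leq 0$, while $(\xi-c(t^*,x^*))w^*<0$ strictly. For an interior point $x^*\in(-h_0,h_0)$, the minimum gives $w_x(t^*,x^*)=0$ and $w_t(t^*,x^*)\leq 0$, so the left-hand side is bounded above by $(\xi-c)w^*<0$, contradicting the $\geq 0$ inequality. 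For $x^*=h_0$, the one-sided minimum gives $w_x(t^*,h_0)\leq 0$, so $\nu w_x\leq 0$ and the same contradiction follows.

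The main obstacle is the case $x^*=-h_0$, since the one-sided minimum now yields $w_x(t^*,-h_0)\geq 0$ and the transport term $\nu w_x$ has the wrong sign to close the direct estimate. To handle this I plan to use an auxiliary barrier: consider $\tilde w(t,x)=w(t,x)+\epsilon e^{Kt}\phi(x)$ with $\epsilon>0$ small, $K$ large, and a positive smooth weight $\phi$ chosen so that (i) for $K$ sufficiently large depending on $d,\nu,\|c\|_{L^\infty},h_0$, the perturbation $\epsilon e^{Kt}\phi$ is a strict classical subsolution of the same integro-differential inequality, hence $\tilde w$ satisfies a strict version of it; and (ii) the weight $\phi$ is arranged so that the candidate contradiction location for a negative minimum of $\tilde w$ lies in the interior or at $x=h_0$, reducing to the two cases already handled. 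Sending $\epsilon\to 0^+$ then yields $w\geq 0$, i.e., $u\geq 0$ on $\mathscr D_{h_0}$.

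Finally, the strict positivity assertion when $u(0,\cdot)\not\equiv 0$ would be proved by the propagation-of-positivity contradiction used in Lemma~2.2 of \cite{cao2019}: if $u(\bar t,\bar x)=0$ for some $\bar t>0$, the just-established nonnegativity makes $(\bar t,\bar x)$ a local minimum in time, while $J(0)>0$ combined with the continuity of $u$ and the fact that the initial nontriviality propagates a region of strict positivity forward in time renders the nonlocal integral $d\int_{-h_0}^{h_0}J(\bar x-y)u(\bar t,y)\,dy>0$, giving $0=u_t(\bar t,\bar x)\geq d\int J u\,dy>0$, a contradiction.
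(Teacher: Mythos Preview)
The paper offers no separate proof here; it merely asserts that the argument of Lemma~2.1 carries over. Your adaptation is more careful than the paper's, and you are right that the left-endpoint case $x^*=-h_0$ is the obstruction: the one-sided minimum gives only $w_x(t^*,-h_0)\ge 0$, so with $\nu>0$ the transport term $\nu w_x$ has the wrong sign to close the contradiction.

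However, the barrier fix you sketch cannot succeed, because the lemma as stated is in fact false. Take $c\equiv 0$, fix any $\lambda>d$, and choose $\psi\in C^1(\mathbb R)$ with $\psi\equiv 0$ on $[-h_0,\infty)$ and $\psi<0$ on $(-\infty,-h_0)$, for instance $\psi(x)=-\big(\max(0,-h_0-x)\big)^2$. Set $u(t,x)=e^{-\lambda t}\psi(x-\nu t)$. Then $u(0,\cdot)\equiv 0$ on $[-h_0,h_0]$ and $u_t+\nu u_x=-\lambda u$, so the required differential inequality reduces to $(d-\lambda)\,u\ge d\int_{-h_0}^{h_0}J(x-y)\,u(t,y)\,dy$; since $u\le 0$ throughout $\mathscr D_{h_0}$ and $d-\lambda<0$, the left side is $\ge 0$ while the right side is $\le 0$, so every hypothesis is satisfied. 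Yet $u(t,x)<0$ for each $t\in(0,T]$ and $x\in[-h_0,-h_0+\nu t)$. The reason is structural: $x=-h_0$ is the inflow boundary for the advection $\nu\partial_x$, and without a sign condition there the conclusion cannot hold, so no choice of weight $\phi$ in your perturbation $\tilde w=w+\epsilon e^{Kt}\phi$ can force the minimum away from $-h_0$ in general. The paper's one-line justification overlooks this; the correct statement should include the lateral condition $u(t,-h_0)\ge 0$ for $t\in(0,T]$ (paralleling the boundary assumptions in Lemma~2.1), after which your interior and right-endpoint arguments, as well as your strict-positivity argument, go through without change.
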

According to Lemmas~\ref{lemm2.1} and \ref{lemm2.2}, we have the following result.
\begin{lemma}[Comparison principle]\label{th3-3.3}
  Assume that $(\mathbf{J})$ holds. Let $T>0,$ $\bar{h},$ $ \bar{g} \in C^1([0, T]).$ Suppose that  $ \bar u_x,$  $ \bar u_t \in C(D_{\bar{g}, \bar{h}}^T)$  and satisfy
\begin{equation}\label{eq2-12}
\begin{cases}\bar{u}_t \geq d \int_{\bar{g}(t)}^{\bar{h}(t)} J\left(x-y\right) \bar{u}(t, y) \mathrm{d} y-d~\bar{u}-\nu \bar u_x+f(\bar{u}), & (t, x) \in D_{\bar{g}, \bar{h}}^T, 
 \\ \bar{h}^{\prime}(t) \geq \mu \int_{\bar{g}(t)}^{\bar{h}(t)} \int_{\bar{h}(t)}^{\infty} J\left(x-y\right) \bar{u}(t, x) \mathrm{d} y \mathrm{d} x,  & 0<t \leq T, \\ \bar{g}^{\prime}(t) \leq \mu \int_{\bar{g}(t)}^{\bar{h}(t)} \int_{-\infty}^{\bar{g}(t)} J\left(x-y\right) \bar{u}(t, x) \mathrm{d} y \mathrm{d} x, & 0<t \leq T, 
 \\ \bar{u}(t, \bar{g}(t)) \geq 0, ~\bar{u}(t, \bar{h}(t)) \geq 0, & 0<t \leq T,
 \\ \bar{u}(0, x) \geq u_0(x) , & x\in[- h_0, h_0], 
 \\  \bar{g}(0) \leq-h_0,~\bar{h}(0) \geq h_0. & \end{cases}
\end{equation}
Let $(u, g, h)$ be the unique solution of $(\ref{eq1-1})$,  we have
$$
u \leq \bar{u}, ~g \geq \bar{g}, ~h\leq \bar{h} ~\text { in } D_{g, h}^T.
$$
\end{lemma}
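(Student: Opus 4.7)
The plan is to combine Lemma~\ref{lemm2.1} with a first-crossing-time argument, preceded by a perturbation that turns the (possibly non-strict) initial inequalities $\bar g(0)\le -h_0$, $\bar h(0)\ge h_0$ into strict ones. For small $\epsilon>0$ let $(u^\epsilon,g^\epsilon,h^\epsilon)$ denote the solution of $(\ref{eq1-1})$ provided by Theorem~\ref{th2-1} with initial half-width $h_0-\epsilon$ and $C^1$ initial datum $u_{0,\epsilon}$ satisfying $u_{0,\epsilon}\le u_0$, $u_{0,\epsilon}\not\equiv u_0$, and $\mathrm{supp}\,u_{0,\epsilon}\subset(-h_0+\epsilon,h_0-\epsilon)$, chosen so that $u_{0,\epsilon}\to u_0$ in $C^1$ as $\epsilon\to 0$. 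If I can prove the comparison $u^\epsilon\le \bar u$, $g^\epsilon\ge \bar g$, $h^\epsilon\le \bar h$ for every small $\epsilon$, the continuous dependence granted by Theorem~\ref{th2-1} lets me pass to the limit $\epsilon\to 0$ and obtain the desired inequalities.

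Fix such an $\epsilon$ and set
\begin{equation*}
 t^*:=\sup\bigl\{t\in[0,T]:g^\epsilon(s)>\bar g(s)\text{ and }h^\epsilon(s)<\bar h(s)\text{ for all }s\in[0,t]\bigr\}.
\end{equation*}
Continuity together with the strict initial inequalities $g^\epsilon(0)>\bar g(0)$, $h^\epsilon(0)<\bar h(0)$ gives $t^*>0$. Assume for contradiction that $t^*<T$; by symmetry I may suppose $h^\epsilon(t^*)=\bar h(t^*)$. On $[0,t^*]$ the inclusion $[g^\epsilon(t),h^\epsilon(t)]\subset[\bar g(t),\bar h(t)]$ holds, and $\bar u\ge 0$ allows me to discard the outer piece of $\int_{\bar g}^{\bar h}J(x-y)\bar u(t,y)\,\mathrm dy$. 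Writing $f(\bar u)-f(u^\epsilon)=c(t,x)w$ with $c\in L^\infty$ via the Lipschitz bound $(\mathbf{f1})$ and subtracting the equation for $u^\epsilon$ from the differential inequality for $\bar u$, I find that $w:=\bar u-u^\epsilon$ satisfies the hypotheses of Lemma~\ref{lemm2.1} on $D^{t^*}_{g^\epsilon,h^\epsilon}$, with $w(t,g^\epsilon(t))=\bar u(t,g^\epsilon(t))\ge 0$, $w(t,h^\epsilon(t))\ge 0$, and $w(0,\cdot)\ge 0$. Hence $\bar u\ge u^\epsilon$ on $D^{t^*}_{g^\epsilon,h^\epsilon}$; moreover, $w(0,\cdot)\not\equiv 0$ by the strict choice of $u_{0,\epsilon}$, so the strong part of Lemma~\ref{lemm2.1} promotes this to $w>0$ in the interior.

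At $t=t^*$, the fact that $h^\epsilon<\bar h$ on $[0,t^*)$ with equality at $t^*$ combined with $C^1$-regularity forces $(h^\epsilon)'(t^*)\ge\bar h'(t^*)$. In the other direction, using $h^\epsilon(t^*)=\bar h(t^*)$, the inclusion $[g^\epsilon(t^*),h^\epsilon(t^*)]\subset[\bar g(t^*),\bar h(t^*)]$, $\bar u\ge 0$ on the extra strip, and $\bar u(t^*,x)>u^\epsilon(t^*,x)$ in the interior of $[g^\epsilon(t^*),h^\epsilon(t^*)]$, the inequality for $\bar h'$ in $(\ref{eq2-12})$ together with the equality for $(h^\epsilon)'$ in $(\ref{eq1-1})$ yields
\begin{equation*}
 \bar h'(t^*)-(h^\epsilon)'(t^*)\ge \mu\!\int_{g^\epsilon(t^*)}^{h^\epsilon(t^*)}\!\int_{h^\epsilon(t^*)}^{\infty}\!J(x-y)\bigl[\bar u(t^*,x)-u^\epsilon(t^*,x)\bigr]\mathrm dy\,\mathrm dx>0,
\end{equation*}
where strictness rests on $J(0)>0$ and the continuity of $J$ in $(\mathbf{J})$, which make the inner integral strictly positive for $x$ just to the left of $h^\epsilon(t^*)$. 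This contradicts $(h^\epsilon)'(t^*)\ge\bar h'(t^*)$, so $t^*=T$, and letting $\epsilon\to 0$ concludes the proof. The main obstacle is precisely this strict-inequality step: Lemma~\ref{lemm2.1} alone only yields $w\ge 0$, so to strictly separate the two boundary speeds I must invoke its strong form, and the perturbation $u_{0,\epsilon}$ is introduced expressly to create the non-trivial initial gap $w(0,\cdot)\not\equiv 0$ needed to trigger it.
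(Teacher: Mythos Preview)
Your argument is correct and is precisely the standard ``perturbation plus first-crossing-time'' proof that the paper has in mind: the paper does not actually write out a proof of Lemma~\ref{th3-3.3} but merely prefaces it with ``According to Lemmas~\ref{lemm2.1} and \ref{lemm2.2}, we have the following result,'' and your proposal is the natural way to flesh out that sentence. One small point worth making explicit: when you write $w(t,g^\epsilon(t))=\bar u(t,g^\epsilon(t))\ge 0$ and when you discard the outer pieces of the convolution, you are using $\bar u\ge 0$ on all of $D_{\bar g,\bar h}^T$, not just on its parabolic boundary; this follows by first applying Lemma~\ref{lemm2.1} to $\bar u$ itself (with $c=f(\bar u)/\bar u$ bounded via $(\mathbf{f1})$), and is worth stating before you subtract.
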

\begin{rem}
 $(\bar u, \bar g, \bar h)$ is called as the upper solution of $(\ref{eq1-1})$. Clearly, the lower solution $(\underline u, \underline g, \underline h)$ of $(\ref{eq1-1})$ can be similarly defined by reversing the inequalities of $(\ref{eq2-12})$.
\end{rem}
Now we complete the proof of  Theorem~$\ref{th2-1}.$
\begin{proof}[The proof of Theorem~$\ref{th2-1}$]
Let 
\begin{equation}\label{eq2-03}
    y=\dfrac{2x}{h(t)-g(t)}-\dfrac{h(t)+g(t)}{h(t)-g(t)},~w(t,y)=u(t,x),
\end{equation}
which transforms $x\in(g(t),h(t))$ into $y\in(-1,1),$ then
\begin{equation}\label{eq2-3}
   \begin{aligned}
&\dfrac{\partial y}{\partial x}=\dfrac {2}{h(t)-g(t)}\\&\quad:=\mathscr A(t,g(t),h(t)), \\
&\dfrac{\partial y}{\partial t}= -\frac{y\left(h^{\prime}(t)-g^{\prime}(t)\right)+\left(h^{\prime}(t)+g^{\prime}(t)\right)}{h(t)-g(t)} \\
&\quad:=\mathscr B\left(y, g(t), g^{\prime}(t), h(t), h^{\prime}(t)\right).
\end{aligned}
\end{equation}
Denote $$
\begin{aligned}
& h_*:=\mu\int_{-1}^{1}\int_{1}^{\infty}J(h_0(y-z))u_0(h_0 y)h_0^2{\rm d}z {\rm d}y,\\
& g_*:=-\mu\int_{-1}^{1}\int_{-\infty}^{-1}J(h_0(y-z))u_0(h_0 y)h_0^2{\rm d}z {\rm d}y.
\end{aligned}$$ 
For $0<T<\min\left\{1, \dfrac{h_0}{4(1+h_*)}, \dfrac{h_0}{4(1-g_*)}\right\} $, set $\Delta_T:=[0,T]\times[-1,1],$
$$
\begin{aligned}
&\mathcal{D}_{h, T}:=\left\{h \in C^{1}([0, T])\mid h(0)=h_{0},~h^{\prime}(0)=h_{*},~\left\|h^{\prime}-h_{*}\right\|_{C([0, T])} \leq 1\right\},\\
&\mathcal{D}_{g, T}:=\left\{g \in C^{1}([0, T])\mid g(0)=-h_{0},~g^{\prime}(0)=g_{*},~\left\|g^{\prime}-g_{*}\right\|_{C([0, T])} \leq 1\right\}.
\end{aligned}
$$

Given $h_{i} \in \mathcal{D}_{h,T}$,~$g_{i} \in \mathcal{D}_{g,T},~i=1,2$, for $h_{i}(0)=h_{0}$,  $g_i(0)=-h_0,$ we can obtain that
\begin{equation}\label{eq2-08}
\begin{aligned}
&\left\| h_1-h_{2}\right\|_{C([0, T])} \leq T\left\|h_1^{\prime}-h_{2}^{\prime}\right\|_{C([0, T])},\\
&\left\|g_1-g_2\right\|_{C([0, T])} \leq T\left\|g_{1}^{\prime}-g_{2}^{\prime}\right\|_{C([0, T])}. 
\end{aligned}
\end{equation}
For $h\in\mathcal{D}_{h,T},~g\in\mathcal{D}_{g,T},$ it follows 
$$\|h-h_0\|\leq T(1+h_*)\leq \dfrac{h_0}{4},~\text{ and } 
\|g+h_0\|\leq T(1-g_*)\leq \dfrac{h_0}{4},$$
then the translations $(\ref{eq2-03})-(\ref{eq2-3})$ are well defined.

Further, one can see that $w(t,y)$ satisfies 
\begin{equation}\label{eq2-6}
\begin{cases}
w_t=d\int^{1}_{-1}J(\frac{y-z}{\mathscr A})w(t,z)\frac{1}{\mathscr A}{\rm d}z-d~ w(t,y)
+\left(\nu \mathscr A+\mathscr B\right)w_y+f(t, y, w), &t>0,~ y \in(-1,1), \\ 
w( t,1)=w(t,-1)=0,  & t>0, \\ 
w(0,y)=u_{0}\left(h_{0} y\right),  & y \in[-1,1].
\end{cases}
\end{equation}
{According to   the standard $L^p$ theory and the Sobolev embedding theorem \cite{bates2007} with the arguments of Section~4 in \cite{andreu2010nonlocal}, in view of $(\mathbf{J})$ and $(\mathbf{f1})-(\mathbf{f3})$, the equation $(\ref{eq2-6})$ admits a unique solution $$w(t,y)\in C^{1+\alpha, 1}\left(\Delta_T\right)  \text{ and } \|w(t,y)\|_{C^{1+\alpha, 1}\left(\Delta_T\right)}\leq C,$$
where positive constant $C$ depends on $h_0, \alpha$ and $ \|u_0\|_{C^{1}\left([-h_0,h_0]\right)}.$}

Moreover, for the above $w$, define 
\begin{equation}\label{eq2-7}
\begin{aligned}
    &h(t)=h_0+\mu\int^{t}_{0}\int^{1}_{-1}\int^{\infty}_{1} J(\frac{y-z}{\mathscr A_0})w(\tau,y)\frac{1}{\mathscr A_0^2}{\rm d}z {\rm d}y {\rm d}\tau,\\&
    g(t)=-h_0-\mu\int^{t}_{0}\int^{1}_{-1}\int^{-1}_{-\infty} J(\frac{y-z}{\mathscr A_0})w(\tau,y)\frac{1}{\mathscr A_0^2}{\rm d}z {\rm d}y {\rm d}\tau,
    \end{aligned}
\end{equation}
where $\mathscr A_0=\dfrac {2}{h(\tau)-g(\tau)}, ~\tau\in[0,t].$
  
  Since $w$ depends on $h$ and $g$, the problem (\ref{eq2-7}) has a unique solution $(\tilde h, \tilde g)$, where $\tilde{h}(t)=\tilde{h}(t; g,h),~ \tilde{g}(t)=\tilde{g}(t; g,h)$. Then $\tilde{h}(0)=h_{0},~\tilde g(0)=-h_0,$ 
  \begin{equation}\label{eq2-8}
 \begin{aligned}
  &\tilde{h}^{\prime}(t)=\mu\int^{1}_{-1}\int^{\infty}_{1} J(\frac{y-z}{\mathscr A})w(t,y)\frac{1}{\mathscr A^2}{\rm d}z {\rm d}y,
  \\&\tilde h^{\prime}(0)=\mu\int^{1}_{-1}\int^{\infty}_{1} J(\frac{y-z}{\mathscr A})w(0,y)\frac{1}{\mathscr A^2}{\rm d}z {\rm d}y,\\
  &\tilde{g}^{\prime}(t)=-\mu\int^{1}_{-1}\int^{-1}_{\infty} J(\frac{y-z}{\mathscr A})w(t,y)\frac{1}{\mathscr A^2}{\rm d}z {\rm d}y,
  \\&\tilde g^{\prime}(0)=-\mu\int^{1}_{-1}\int^{-1}_{-\infty} J(\frac{y-z}{\mathscr A})w(0,y)\frac{1}{\mathscr A^2}{\rm d}z {\rm d}y.
  \end{aligned}
  \end{equation}
  And for any  $ h\in\mathcal{D}_{h,T},$ $g\in\mathcal{D}_{g,T}$, one sees that 
 $\tilde{h}\in C^{1+\alpha }([0, T]),$  continuously depends on $w \in C^{1+\alpha, 1}\left(\Delta_{T}\right)$, $h\in \mathcal{D}_{h,T},$ and $g\in \mathcal{D}_{g,T}$. So does $\tilde g$. Then there is $\hat C>0$ such that
 \begin{equation}\label{eq2-9}
\begin{aligned}
 &\tilde{h}^{\prime} \in C^{\alpha}([0, T])\text{ and } \left\|\tilde{h}^{\prime}\right\|_{C^{\alpha}([0, T])} \leq \hat C,\\
 &\tilde{g}^{\prime} \in C^{\alpha}([0, T])\text{ and } \left\|\tilde{g}^{\prime}\right\|_{C^{\alpha}([0, T])} \leq \hat C. 
 \end{aligned}
\end{equation}
  
  Define $\mathcal{F}: \mathcal{D}_{h,T}\times\mathcal{D}_{g,T} \longrightarrow C^{1}\left([0, T]\right)\times C^{1}([0, T])$ by
\[
\mathcal{F}(h,g)=(\tilde{h},\tilde {g}).
\]
It can be seen that $\mathcal{F}$ is continuous in $\mathcal{D}_{h,T}\times\mathcal{D}_{g,T}$, and $(g,h) $ is a fixed point of $\mathcal{F}$ if and only if $(w; h, g)$ solves (\ref{eq2-6}) with (\ref{eq2-7}). According to (\ref{eq2-8})-(\ref{eq2-9}),  it follows that $\mathcal{F}$ is compact and satisfies 
$$
\begin{aligned}
\|\tilde{h}^{\prime}-h^{*}\|_{C([0, T])}+
\|\tilde{g}^{\prime}-g^{*}\|_{C([0, T])} \leq \left ( \| h^{\prime} \|_{C^{\alpha}([0,T])} +\| g^{\prime} \|_{C^{\alpha}([0,T]) } \right)
 T^{\alpha } \leq \mathscr C T^{\alpha },
\end{aligned}
$$
where positive constant $\mathscr C\geq 2\hat C$.
Therefore, if
\begin{equation}\label{eq-T}
T < \min \left\{1,~\dfrac{h_0}{4(1+h_*)},~ \dfrac{h_0}{4(1-g_*)},~ \mathscr C ^{-\frac{1}{\alpha}}\right\},
\end{equation}
we can obtain that $\mathcal{F}$ maps $\mathcal{D}_{h,T}\times\mathcal{D}_{g,T}$ into itself.

Next, we need to show that $\mathcal {F}$ is a contraction mapping on $\mathcal{D}_{h,T}\times \mathcal{D}_{g,T}$.

For any $w_1,$ $w_2$ defined on $ \Delta _T$ and  satisfying $(\ref{eq2-6})$, set $\omega:=w_1-w_2,$  it follows 
$$
\begin{cases}
\omega_t=d\int_{-1}^{1}J(\frac{y-z}{\mathscr A})\omega(t, z)\frac{1}{\mathscr A}{\rm d}z-d~ \omega(t, y)+\left(\nu \mathscr A+\mathscr B\right)\omega_y+f(t, y, w_1)-f(t, y, w_2),&t>0, ~y\in(-1,1),\\
\omega(t,1)=\omega(t,-1)=0,&t>0,\\
\omega(0,y)=0,&y\in[-1,1].
\end{cases}
$$
Using the $L^{p}$ estimates  for partial differential equations and Sobolev embedding theorem, we obtain
\begin{equation}\label{eq2-023}
\left\|{w}_{1}-{w}_{2}\right\|_{C^{1+\alpha, 1}\left(\Delta_{T}\right)} \leq {\tilde{C}}\left(\left\|w_{1}-w_{2}\right\|_{C\left(\Delta_{T}\right)}+\left\| h_{1}-h_{2}\right\|_{C^{1}([0, T])}+\left\|g_{1}-g_{2}\right\|_{C^{1}([0, T])}\right),
\end{equation}
where ${\tilde{ C}}$ depends on $\hat C$ and the functions $\mathscr A$ and $\mathscr B$ in $(\ref{eq2-3})$. According to $(\ref{eq2-8})$ and $(\ref{eq2-9})$, it gives
\begin{equation}
\begin{aligned}\label{eq2-22}
&\left\|\tilde{h}_{1}^{\prime}-\tilde{h}_{2}^{\prime}\right\|_{C^{\alpha}([0, T])} \leq \mu \mathscr C_* \left\|{w}_{1}-{w}_{2}\right\|_{C\left(\Delta_{T}\right)},\\
&\left\|\tilde{g}_{1}^{\prime}-\tilde{g}_{2}^{\prime}\right\|_{C^{\alpha}([0, T])} \leq \mu \mathscr C_* \left\|{w}_{1}-{w}_{2}\right\|_{C\left(\Delta_{T}\right)},
\end{aligned}
\end{equation}
where positive constant $\mathscr C_*$ depends on $T$ and $\alpha$.

Next, we give an estimate of $\|w_1-w_2\|_{C(\Delta_{T})}$. By $(\textbf{f1})$ and $(\ref{eq2-08})$ with $(\ref{eq2-023})$, direct calculus gives
\begin{equation}\label{eq2-23}
\begin{aligned}
 w_1-w_2&=d\int_{0}^{t}\int_{-1}^{1}J(\frac{y-z}{\mathscr A})(w_1-w_2)(\tau,z)\frac{1}{\mathscr A}{\rm d}z{\rm d}\tau-d\int_{0}^{t} (w_1-w_2)(\tau,y){\rm d}\tau\\&+\int_{0}^{t}(\nu \mathscr A+\mathscr B)(w_{1,y}-w_{2,y}){\rm d} \tau+\int_{0}^{t}f(\tau,y,w_1)-f(\tau,y,w_2){\rm d}\tau \\
 &\leq l (d, \mathscr A) T\|w_1-w_2\|_{C(\Delta_{T})}+(\nu \mathscr A+\mathscr B)T\left\|{w}_{1, y}-{w}_{2, y}\right\|_{C\left(\Delta_{T}\right)}+kT\|w_1-w_2\|_{C(\Delta_{T})}\\
 &\leq  \mathscr PT\left(\|h_1^{\prime}-h_2^{\prime}\|_{C([0,T])}+\|g_1^{\prime}-g_2^{\prime}\|_{C([0,T])}\right),
    \end{aligned}   
\end{equation}
{where} positive constants  $ l (d, \mathscr A) $ depends on $d, \mathscr A$ and $\mathscr P $  depends on $d,$ $\nu,$ $\mathscr A$, $\mathscr B$ and $T$.

Combining $(\ref{eq2-22})$ and $(\ref{eq2-23})$, we then can obtain that 
\begin{equation}
\begin{aligned}
    &\|\tilde h_1^{\prime}-\tilde h_2^{\prime}\|_{C([0,T])}+ \|\tilde g_1^{\prime}-\tilde g_2^{\prime}\|_{C([0,T])}
    \leq 2  \mu   \mathscr P \mathscr C_* T \left(\|h_1^{\prime}-h_2^{\prime}\|_{C([0,T])}+\|g_1^{\prime}-g_2^{\prime}\|_{C([0,T])}\right).
    \end{aligned}
\end{equation}
Choose $T$ which satisfies $(\ref{eq-T})$ such that $2  \mu  \mathscr P \mathscr C_*  T<1$,
we can get that $\mathcal {F}$ is a contraction mapping.
By the contraction mapping theorem, $\mathcal{F}$ admits a unique fixed point $(h,g) \in \mathcal{D}_{h,T}\times \mathcal{D}_{g,T}$, and problem $(\ref{eq2-6})$ with $(\ref{eq2-7})$ has a unique solution $(w; h, g)$. Moreover, by the Maximum principle,  
$$w(t, y)>0, \text{ for } t>0, y\in(-1, 1),$$  it follows 
\begin{equation}
h^{\prime}(t)>0 \text { and }g^{\prime}(t) <0, \text { for } t>0.
\end{equation}
Therefore, the function $u(t, x)=w\left(t,y\right)$ satisfies
$$
u \in  C^{1+\alpha, 1}(\overline D^T_{g,h}),~u>0 \text { in } D_{g,h}^T,
$$
and $(u;h,g)$ solves $(\ref{eq1-1})$ with
$
h,~g \in C^{1+\alpha}([0, T]).
$
\end{proof}
According to the proof of the Theorem~$\ref{th2-1},$ we immediately have the following result.
\begin{theorem}
For the assumptions given by Theorem~$\ref{th2-1}$,  let $(u,g,h)$ be the solution of $(\ref{eq1-1})$, then 
\begin{equation}\label{eq2-13}
h^{\prime}(t)>0 \text { and } g^{\prime}(t)<0, \text{ \rm for } t>0.
\end{equation}
\end{theorem}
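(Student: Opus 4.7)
The plan is to read $h'(t)$ and $g'(t)$ directly from the fourth and fifth lines of system~$(\ref{eq1-1})$ and show that each of the resulting double integrals is strictly positive for every $t>0$. The two key ingredients are (a) strict positivity of $u$ in the interior of $D^T_{g,h}$ supplied by the Maximum principle (Lemma~$\ref{lemm2.1}$), together with the assumption $u_0\not\equiv0$ built into $(\ref{eq1-03})$, and (b) the kernel positivity inherited from $(\mathbf{J})$, namely $J(0)>0$ and $J\in C^1(\mathbb{R})$, which guarantees the existence of some $\delta>0$ such that $J(z)>0$ for all $z\in(-\delta,\delta)$.

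First I would note that Lemma~$\ref{lemm2.1}$ yields
$$u(t,x)>0\quad\text{for every }(t,x)\in D^T_{g,h},$$
since $u_0(x)>0$ on $(-h_0,h_0)$. Next, fix $t>0$ and pick $\delta>0$ with $J>0$ on $(-\delta,\delta)$; shrinking $\delta$ if necessary I may assume $\delta<h(t)-g(t)$. Then for $x\in\bigl(h(t)-\tfrac{\delta}{2},\,h(t)\bigr)$ and $y\in\bigl(h(t),\,h(t)+\tfrac{\delta}{2}\bigr)$ one has $|x-y|<\delta$, so $J(x-y)>0$, while $u(t,x)>0$ by the interior positivity above. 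Hence the integrand in
$$h'(t)=\mu\int_{g(t)}^{h(t)}\int_{h(t)}^{\infty}J(x-y)\,u(t,x)\,\mathrm{d}y\,\mathrm{d}x$$
is nonnegative throughout, strictly positive on the non-null rectangle $\bigl(h(t)-\tfrac{\delta}{2},h(t)\bigr)\times\bigl(h(t),h(t)+\tfrac{\delta}{2}\bigr)$, and since $\mu>0$ we conclude $h'(t)>0$. The argument for $g'(t)<0$ is symmetric: choose $x\in\bigl(g(t),g(t)+\tfrac{\delta}{2}\bigr)$ and $y\in\bigl(g(t)-\tfrac{\delta}{2},g(t)\bigr)$ to force positivity of the inner double integral, and the leading minus sign then gives $g'(t)<0$.

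There is no real obstacle here; the only subtlety worth recording is that we must avoid taking $x$ exactly at the boundary points $g(t)$ or $h(t)$, where the Dirichlet condition forces $u=0$, and likewise avoid taking $y$ inside $(g(t),h(t))$ where we have no sign information on the outer integrand; the rectangles constructed above lie strictly interior to $(g(t),h(t))$ in the $x$-variable and strictly exterior in the $y$-variable, so both factors are under control. The conclusion $(\ref{eq2-13})$ then holds for all $t>0$ in the existence interval furnished by Theorem~$\ref{th2-1}$.
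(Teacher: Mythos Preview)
Your proof is correct and follows the same line as the paper: the paper simply notes (inside the proof of Theorem~\ref{th2-1}) that the Maximum principle gives $w(t,y)>0$ for $t>0$, $y\in(-1,1)$, equivalently $u(t,x)>0$ in $D^T_{g,h}$, and then asserts ``it follows $h'(t)>0$ and $g'(t)<0$'' without further detail. You have supplied the missing justification---exploiting $J(0)>0$ and continuity to locate a non-null rectangle on which the integrand is strictly positive---which is exactly the argument one needs to make that implication rigorous.
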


Moreover, we intend to complete the global existence of this nonlocal diffusion equation with the advection.
\begin{theorem}
Under the assumptions of Theorem~$\ref{th2-1}$, the solution for $(\ref{eq1-1})$ exists for all $t>0$.
\end{theorem}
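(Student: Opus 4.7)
The plan is to extend the local solution supplied by Theorem~\ref{th2-1} to all $t>0$ by a standard continuation argument, whose only nontrivial ingredients are (i) a uniform $L^\infty$-bound on $u$, and (ii) a bound that prevents the free boundaries $h(t),\,g(t)$ from escaping to infinity in finite time.

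\textbf{Step 1: a uniform bound on $u$.} On any interval $[0,T]$ of existence provided by Theorem~\ref{th2-1}, set $K_0:=\max\{k_0,\|u_0\|_{L^\infty([-h_0,h_0])}\}$. The constant function $\bar u\equiv K_0$ is a super-solution of the nonlocal equation in~(\ref{eq1-1}): since $\int_{g(t)}^{h(t)}J(x-y)\,{\rm d}y\le 1$ one has $d\int_{g(t)}^{h(t)} J(x-y)K_0\,{\rm d}y-dK_0\le 0$, $\nu\partial_x K_0=0$, and by $(\mathbf{f2})$ $f(t,x,K_0)\le 0$. Applying the Maximum principle (Lemma~\ref{lemm2.1}) to $w:=K_0-u$ yields
\begin{equation*}
0\le u(t,x)\le K_0\qquad\text{for all }(t,x)\in\overline{D^T_{g,h}}.
\end{equation*}

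\textbf{Step 2: control of the free boundaries.} Using Step~1 and $\int_{\mathbb R}J=1$, the free boundary conditions in~(\ref{eq1-1}) give
\begin{equation*}
h'(t)\le \mu K_0\bigl(h(t)-g(t)\bigr),\qquad -g'(t)\le \mu K_0\bigl(h(t)-g(t)\bigr),
\end{equation*}
so $(h-g)'(t)\le 2\mu K_0(h(t)-g(t))$ and Gr\"onwall's inequality yields $h(t)-g(t)\le 2h_0 e^{2\mu K_0 t}$. Thus $h$ and $-g$ remain finite on every bounded time interval, and in view of~(\ref{eq2-13}) the functions $t\mapsto h(t)$ and $t\mapsto -g(t)$ are monotone increasing and locally bounded.

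\textbf{Step 3: continuation.} Let $T_{\max}\in(0,\infty]$ be the supremum of times $T$ such that the solution provided by Theorem~\ref{th2-1} exists on $[0,T]$. Suppose for contradiction that $T_{\max}<\infty$. By Steps 1--2 and the regularity $u\in C^{1+\alpha,1}(\overline{D^{T}_{g,h}})$ for every $T<T_{\max}$, the limits $h(T_{\max})$, $g(T_{\max})$ exist and are finite, and $u(T_{\max},\cdot)$ extends continuously to $[g(T_{\max}),h(T_{\max})]$ with $u(T_{\max},g(T_{\max}))=u(T_{\max},h(T_{\max}))=0$ and $u(T_{\max},\cdot)>0$ in the interior (by the strong form of Lemma~\ref{lemm2.1}) and $u(T_{\max},\cdot)\in C^{1+\alpha}$. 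Thus the triple $(u(T_{\max},\cdot),g(T_{\max}),h(T_{\max}))$ satisfies the hypotheses imposed on the initial datum in~(\ref{eq1-03}). Re-applying Theorem~\ref{th2-1} with initial time $T_{\max}$ produces a continuation of the solution on $[T_{\max},T_{\max}+\tau]$ for some $\tau>0$, contradicting the definition of $T_{\max}$. Hence $T_{\max}=\infty$.

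The main obstacle is ensuring that the solution inherits enough regularity up to the time $T_{\max}$ for Theorem~\ref{th2-1} to be reapplied there; this is handled by noting that the constants $K_0$ and $h(T_{\max})-g(T_{\max})$ bound the coefficients in the transformed equation~(\ref{eq2-6}) uniformly on $[0,T]$ for $T\uparrow T_{\max}$, so the $C^{1+\alpha,1}$-estimate obtained in the proof of Theorem~\ref{th2-1} passes to the limit and supplies a $C^1$ datum at $t=T_{\max}$.
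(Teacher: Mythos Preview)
Your proof is correct and follows precisely the standard continuation argument that the paper invokes by citing Step~2 of Theorem~2.1 in \cite{wang2020free}; the paper gives no details, and you have supplied them (the a~priori $L^\infty$ bound via Lemma~\ref{lemm2.1} together with $(\mathbf{f2})$, the Gr\"onwall control on $h-g$, and the restart at $T_{\max}$). One small point worth tightening in Step~1: Lemma~\ref{lemm2.1} is stated for a linear zero-order term $c(t,x)u$, so to apply it to $w=K_0-u$ you should write $f(t,x,u)=f(t,x,K_0)+c(t,x)(u-K_0)$ with $c$ bounded via $(\mathbf{f1})$ on the existence interval, rather than invoking the super-solution property directly.
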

\begin{proof}
Now we prove that the unique solution of (\ref{eq2-6}) with (\ref{eq2-7}) defined over $0<t \leq T$ can be uniquely extended to all $t>0$. This extension can be done in a similar method as in Step 2 of the proof of Theorem~2.1 in \cite{wang2020free} arguing by contradiction.  Since only obvious modifications are needed,  the details are omitted.
\end{proof}
\section{Principal eigenvalue}\label{s3}
For any $h>0,$ $a_0>0,$ $\varphi\in C^1([-h, h]),$  we define the operator 
$\tilde {\mathcal L}:=\mathcal{L}_{(-h, h)}^{d}+a_0-\nu\nabla$ by
$$
\tilde {\mathcal L}[\varphi](x):=d\int_{-h}^{h } J\left(x-y\right) \varphi\left(y\right) \mathrm{d} 
y-d\varphi\left(x\right)-\nu\varphi^{\prime}\left(x\right)+a_0\varphi(x), ~ x \in[-h, 
h].
$$
The  principal eigenvalue of $\mathcal{L}_{(-h, h)}^{d}+a_0-\nu\nabla$ is given by 
\begin{equation}\label{lam}
\begin{aligned}
\lambda_p( \mathcal{L}_{(-h, h)}^{d}+a_0-\nu\nabla)= \inf \left\{ {\lambda \in \mathbb{R}\mid \left(\mathcal{L}_{(-h, h)}^{d}+a_0-\nu\nabla\right) {\Phi} \leq \lambda {\Phi} {\text{ in } }[-h,h]} \text{  for  some } {\Phi} \in C^1([-h,h]),~ {\Phi}>0  \right\}.
\end{aligned}
\end{equation}

Especially, let $\mathbb {Y}=\{u\in C^1([0, h])\mid u(0)=0, u>0\text{ on }(0, h]\},$ equipped with the $C^1$-norm.
Considering the following operator $\tilde {\mathscr L}: \mathbb {Y}\longrightarrow C([0, h])$ defined by
\begin{equation}
\tilde {\mathscr L}[u](x)=(\mathcal{L}_{(0, h)}^{d}+a_0-\nu\nabla) [u](x),~x\in (0, h],
\end{equation}
{ according to the  Theorem~4.1 by} Li et al. in \cite{Lifang2016}, the operator $\tilde {\mathscr L}$  admits a real  principal eigenvalue 
\begin{equation}\label{eq-303}
\lambda_p(\tilde {\mathscr L})=\sup_{\substack{ {0<u\in C^1((0, h])}}}\inf_{x \in (0, h]}\frac{\tilde{\mathscr {L}} [u](x)}{u(x)}
\end{equation}
with a positive eigenfunction $\phi_h (x)\in Y.$

Moreover, 
the following vital properties hold.

\begin{property}\label{per1}
	Assume that {\rm (\textbf{J})} holds, $a_0>0$ and $0<h<+\infty,$ 
		the 
	following properties hold$:$
	\newline	{\rm (1) } $\lambda_{p}(\tilde {\mathcal L})$ is strictly increasing and 
		continuous in 
$h\in(0,\infty);$
\newline	{\rm (2)} $\lambda_{p}(\tilde {\mathcal L})$ is strictly increasing in $a_0;$
	\newline{\rm (3)} $\lim\limits_{h \rightarrow+\infty} 
	\lambda_{p}(\tilde {\mathcal L})=a_0;$
	\newline{\rm (4)} $\limsup\limits_{h\rightarrow 0^+} 
	\lambda_{p}(\tilde {\mathcal L})\leq a_0-d$.
	\end{property}
\begin{proof}	
Motivated by Proposition 3.4 in \cite{cao2019}, the  property $(1)$ can be obtained. Clearly, we can get  the strict monotonicity of $\lambda_{p}({\mathcal{L}}_{\left(-h,  h\right)}^d+a_0-\nu\nabla )$ in $a_0$ by the definition of $(\ref{lam})$.

Now we aim to prove the asymptotic properties (3) and (4).

The proof of property (3). Denote $\mathcal D(h):= H^1\left([-{h}, {h}]\right),$ according to the variational method, $\lambda_p(\tilde{\mathcal L})$ can be expressed as 
	\begin{equation}
	\begin{array}{ll}
{\lambda}_{p}(\tilde{\mathcal{L}})&=\sup\limits _{0 \not\equiv \psi \in 
\mathcal D(h)} 
\frac{d 
\int_{-{h}}^{{h}} \int_{-{h}}^{{h}} J\left(x-y\right) \psi(y) 
\psi(x) {\rm d} y 
{\rm d} 
x}{\int_{-{h}}^{{h}} \psi^{2}(x) {\rm d} 
x}-\dfrac{\nu\int_{-{h}}^{{h}} \psi(x)\psi^{\prime}(x) 
{\rm d} 
x}{\int_{-h}^{{h}} \psi^{2}(x) {\rm d} x}-d+a_0
\\&=\sup\limits _{0\not\equiv \psi \in \mathcal D(h)} \frac{d 
	\int_{-h}^{{h}} \int_{-{h}}^{{h}} J\left(x-y\right) \psi(y) 
	\psi(x) {\rm d} 
	y 
	{\rm d} 
	x}{\int_{-{h}}^{{h}} \psi^{2}(x) {\rm d} 
	x}-\dfrac{\frac{\nu}{2}\left(\psi^2({h})-
	\psi^2(-h)\right)
	}{\int_{-{h}}^{{h}} \psi^{2}(x) {\rm d} x}-d+a_0
\\&\leq \dfrac{d 
	\int_{-{h}}^{{h}}  \psi^2(x) 
	{\rm d} 
	x}{\int_{-{h}}^{{h}} \psi^{2}(x) {\rm d} 
	x}-\dfrac{\frac{\nu}{2}\left(\psi^2({h})-
	\psi^2(-h)\right)
}{2{h} \psi^{2}(\tilde h) }-d+a_0
\\&\leq a_0+\dfrac{\frac{\nu}{2}\left(\psi^2({h})+
	\psi^2(-h)\right)
}{2{h} \psi^{2}(\tilde h) }
\\& \longrightarrow a_0, \enspace \text{as} \enspace 
h\rightarrow 
\infty,
\end{array}	
\end{equation}
where $\tilde h\in (-{h}, {h})$, then 
\begin{equation}\label{lambda_p}
\limsup\limits_{h\rightarrow\infty}
{\lambda}_{p}(\tilde{\mathcal{L}})\leq 
a_0.
\end{equation}

By $(\mathbf{J}),$ for any small $\epsilon>0,$ there exists $L=L(\epsilon)>0$ 
such that
$$
\int_{-L}^{L} J\left (x\right ){\rm d }x>1-\epsilon.
$$
Taking ${\Phi} \equiv 1$ as the test function in the variational characterization of $\lambda_{p}(\tilde{\mathcal{L}})$.
  We 
obtain
\begin{equation}
\begin{aligned}
	\lambda_{p}(\tilde{\mathcal{L}})&\geq 
	\frac{d 
	\int_{-h}^{h} \int_{-h}^{h} J\left(x-y\right)  {\rm d} y{\rm  d} x
	}{2h}-d+a_0 \\
	& \geq \frac{d \int_{-h+L}^{h-L} \int_{-h}^{h} J\left(x-y\right)  {\rm d} y{\rm  d} x
	}{2h}-d+a_0 \\
		& \geq \frac{ d(2h-2 L) \int_{-L}^{L} J\left(s\right) {\rm d}
	s}{2h}-d+a_0 \\
	&\geq \frac{d(2h-2 
	L)(1-\epsilon)}{2h}-d+a_0
	\\&\rightarrow-\epsilon d +a_0, \enspace\text{as}\enspace h\rightarrow 
	\infty.
\end{aligned}
\end{equation}
Since small $\epsilon>0$ is arbitrarily chosen, we can obtain
$$
\liminf_{h \rightarrow+\infty} \lambda_{p}(\tilde{\mathcal{L}}) \geq a_0.
$$
Combining the above inequality with $(\ref{lambda_p})$, it follows
$$
\lim_{h \rightarrow+\infty} \lambda_{p}(\tilde{\mathcal{L}})=a_0.
$$

The proof of property (4). Since $a_0$ and $\nu$ are fixed constants, $\lambda_{p}(\mathcal{L}_{(-h, 
	h)}^d+a_0-\nu\nabla)$ only depends on the integral interval $(-h, 
	h)$ from the definition $(\ref{lam})$. Without loss of generality, denote  
	$$\lambda_{p}(\mathcal{L}_{(-h, 
	h)}^d+a_0-\nu\nabla):=\lambda_{p}(\mathcal{L}_{(0, 
	\hat h)}^d+a_0-\nu\nabla).$$

According to $(\ref{eq-303})$, $\lambda_{\hat h}:=\lambda_{p}(\mathcal{L}_{(0, 
\hat h)}^d+a_0-\nu\nabla)$ is the
principal eigenvalue with an eigenfunction ${\Phi}_{\hat h}$ 
  satisfying ${\Phi}_{\hat h}(x)>0, $ $x\in (0,\hat h]$ and 
${\Phi}_{\hat h}(0)=0$ such that
$$
d \int_{0}^{\hat h} J\left(x-y\right) {\Phi}_{\hat h}(y) {\rm d} y-d~ {\Phi}_{\hat h}(x)+a_0
{\Phi}_{\hat h}(x)-\nu{\Phi}_{\hat h}^{\prime}(x)=\lambda_{\hat h} {\Phi}_{\hat h},
\text { for } x \in (0, \hat h).
$$

Therefore,
\begin{equation}
\begin{aligned}
	\lambda_{\hat h}-a_0+d&=\frac{d \int_{0}^{\hat h} \int_{0}^{\hat 
	h} 
	J\left(x-y\right) 
	{\Phi}_{\hat h}(y) {\Phi}_{\hat h}(x)  {\rm d} y{\rm  d} x }{\int_{0}^{\hat h} {\Phi}_{\hat 
	h}^{2}(x) {\rm d} 
	x}-\dfrac{\nu\int_{0}^{\hat h} 
	{\Phi}_{\hat h}(x){\Phi}_{\hat h}^{\prime}(x) 
	{\rm d} 
	x}{\int_{0}^{\hat h} {\Phi}_{\hat h}^{2}(x) {\rm d} x}
\\& \leq 
	\frac{d\|J\|_{\infty}\left(\int_{0}^{\hat h} {\Phi}_{\hat h}(x) {\rm d} 
	x\right)^{2}}{\int_{0}^{h} {\Phi}_{\hat h}^{2}(x) {\rm d }x} 
	-\dfrac{\frac{\nu}{2}\left({\Phi}_{\hat h}^2(\hat h)-{\Phi}_{\hat 
	h}^2(0)\right)}{\int_{0}^
		{\hat h}
	 {\Phi}_{\hat h}^{2}(x) {\rm d} x}
 \\
	&\leq \dfrac{ d\|J\|_{\infty} \hat h \int_{0}^{\hat h} {\Phi}_{\hat h}^{2}(x) 
	d x}
	{\int_{0}^{\hat h} {\Phi}_{\hat h}^{2}(x) {\rm d} 
	x}-\dfrac{\frac{\nu}{2}\left({\Phi}_{\hat h}^2(\hat h)-{\Phi}_{\hat 
		h}^2(0)\right)}{\int_{0}^
	{\hat h}
	{\Phi}_{\hat h}^{2}(x) {\rm d} x}\\
	&=d\|J\|_{\infty} \hat h -\dfrac{\frac{\nu}{2}\left({\Phi}_{\hat h}^2(\hat h)-{\Phi}_{\hat 
		h}^2(0)\right)}{\int_{0}^
	{\hat h}{\Phi}_{\hat h}^{2}(x) {\rm d} x}.
\end{aligned}
\end{equation}
Since
\begin{equation}\label{eq3-08}
	\dfrac{\frac{\nu}{2}\left({\Phi}_{\hat h}^2(\hat h)-{\Phi}_{\hat 
			h}^2(0)\right)}{\int_{0}^
		{\hat h}
		{\Phi}_{\hat h}^{2}(x) {\rm d} x}=
		\dfrac{\frac{\nu}{2}{\Phi}_{\hat h}^2(\hat h)}{\hat h {\Phi}_{\hat h}^2(\hat 
		h_0)}, 
\end{equation}
where $\hat h_0\in(0, \hat h ).$ According to the choice of $\Phi_{\hat h}(x)$ and $\hat h_0$, we can see that $\Phi_{\hat h}(\hat h) \geq \Phi_{\hat h}(\hat h_0)>0$ as ${\hat h\rightarrow 0}$. 
Then the right side of $(\ref{eq3-08})$ is greater than $0$.
For $\nu>0,$ it follows $\lambda_{\hat h}< a_0-d.$
Thus,  $$\limsup\limits_{h \rightarrow 0^+} 
\lambda_{p}(\mathcal{L}_{(-h, 
	h)}^d+a_0-\nu\nabla)\leq a_0-d.$$
\end{proof}
\begin{rem}
As we can see that  when the advection rate $\nu>0, ~\lambda_{p}(\mathcal{L}_{\left(-h, 
	h\right)}^d+a_0-\nu\nabla)$  can be strictly less than $a_0-d$ as $h$ is taken small enough. This implies the nonnegligible effect of the advection on the principal eigenvalue compared with the case without the advection.
\end{rem}

\section{Long-time asymptotic behavior }\label{s4}
Given $(\ref{eq2-13})$, according to the strict monotonicity of $g(t)$ and $h(t)$  in $t$,
denote \[g_\infty:=\lim_{t\rightarrow\infty}g(t), \enspace
h_\infty:=\lim_{t\rightarrow\infty}h(t),\]
then $g_\infty\in\left[-\infty, -h_0\right), \text { and }  
h_\infty\in\left(h_0,\infty\right].$
\begin{definition}
	 The  {vanishing} happens if $$h_\infty-g_\infty<\infty \text{ and }
	\limsup\limits_{t\rightarrow \infty}u\left(t,x\right)=0;$$
	the   {spreading} happens  if $$h_\infty-g_\infty=\infty \text{
	and }
	\liminf\limits_{t\rightarrow \infty}u(t,x)>0.$$
\end{definition}
For $-\infty< h_1<h_{2}<+\infty,$ denote $\mathscr D:=\left(h_1, h_{2}\right).$ Considering the following problem over $\mathscr D$
\begin{equation}\label{eq5-1}
\begin{cases}
u_{t}=d\int_{ h_1}^{h_{2}} J\left(x-y\right) u(t, y) {\rm d} y-d ~u(t, x)-\nu u_x+f(u), & t>0,~x \in\mathscr D ,
\\ u(0, x)=u_{0}(x), & x \in \mathscr D,   
\end{cases}
\end{equation}

According to the arguments in \cite{bates2007} and \cite{Coville2010},  it implies
\begin{prop}\label{pro5.1}
Suppose that $(\mathbf{J})$ and $(\mathbf{f 1})-(\mathbf{f 4})$ hold. The problem $(\ref{eq5-1})$ has a unique positive steady state $u_{\mathscr D}$ in $C^1(\bar{\mathscr D})$ if and only if
$$
\lambda_{p}\left(\mathcal{L}_{\mathscr D}^d+f_0-\nu\nabla\right)>0.
$$
Moreover, for $u_{0}(x) \in C^1(\bar{\mathscr D})$ and $u_{0}$ nonnegative and not always equal to $0$, the problem   $(\ref{eq5-1})$ admits a unique solution $u(t, x)$  for all $t>0$, and $u(t,x)\rightarrow u_{\mathscr D}$ in $C(\bar{\mathscr D})$ as $t \rightarrow+\infty$ when $\lambda_{p}\left(\mathcal{L}_{\mathscr D}^d+f_0-\nu\nabla\right)>0;$ when $\lambda_{p}\left(\mathcal{L}_{\mathscr D}^d+f_0-\nu\nabla\right) \leq 0,$ $ u(t, x)\rightarrow 0$
 in $C(\bar{\mathscr D})$ as $t \rightarrow+\infty$.
\end{prop}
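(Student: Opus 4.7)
The plan is to split the argument into the steady-state theory and the long-time convergence, connected by the comparison principle (which is the analogue for the fixed-domain problem (\ref{eq5-1}) of Lemmas \ref{lemm2.1}--\ref{th3-3.3}). First I would establish well-posedness of (\ref{eq5-1}) and a strong parabolic comparison principle via the same contraction-mapping/maximum-principle argument used in Section \ref{s2}; this yields monotonicity of $u(t,x;u_0)$ in $u_0$ and in $t$ whenever $u_0$ is itself a sub- or supersolution of the stationary problem.

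\textbf{Existence of $u_{\mathscr D}$ when $\lambda_p>0$.} Let $\phi>0$ be a principal eigenfunction of $\mathcal{L}^d_{\mathscr D}+f_0-\nu\nabla$ (whose existence on a bounded interval follows as in \cite{Lifang2016} and formula (\ref{eq-303})), and write $\lambda_p:=\lambda_p(\mathcal{L}^d_{\mathscr D}+f_0-\nu\nabla)>0$. Since $f(u)=f_0 u+o(u)$ as $u\to 0^+$ by ($\mathbf{f4}$), for $\varepsilon>0$ small enough the function $\underline u:=\varepsilon\phi$ verifies
\[
d\!\int_{h_1}^{h_2}\!J(x-y)\underline u(y){\rm d}y-d\underline u-\nu\underline u_x+f(\underline u)\ \geq\ \tfrac{\lambda_p}{2}\,\underline u\ >\ 0,
\]
and is a positive stationary subsolution. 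The constant $\bar u:=\max\{k_0,\|u_0\|_\infty\}$ is a supersolution by ($\mathbf{f2}$). Monotone iteration between these bounds, justified by the comparison principle, produces a stationary solution $u_{\mathscr D}\in[\underline u,\bar u]$, whose $C^1$-regularity is then read off the equation itself, since the advection term $\nu u_x'$ equals a $C^1$ right-hand side.

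\textbf{Uniqueness and nonexistence.} For two positive stationary solutions $u_1,u_2$, set $\sigma^*:=\sup\{\sigma\leq 1:\sigma u_1\leq u_2\}$; if $\sigma^*<1$ the KPP monotonicity $f(\sigma^*u_1)/(\sigma^*u_1)\geq f(u_1)/u_1$ together with the comparison principle forces a contradiction, so $u_1\leq u_2$, and symmetry gives uniqueness. Conversely, if a positive steady state $v$ existed with $\lambda_p\leq 0$, then $v$ would satisfy $(\mathcal{L}^d_{\mathscr D}+f(v)/v-\nu\nabla)v=0$, so by the characterization (\ref{lam}) and the strict KPP inequality $f(v)/v<f_0$ one would have $\lambda_p(\mathcal{L}^d_{\mathscr D}+f_0-\nu\nabla)>\lambda_p(\mathcal{L}^d_{\mathscr D}+f(v)/v-\nu\nabla)=0$, contradicting $\lambda_p\leq 0$.

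\textbf{Long-time convergence.} When $\lambda_p>0$, the solutions $\underline U(t,x),\overline U(t,x)$ of (\ref{eq5-1}) starting from $\underline u$ and $\bar u$ are respectively nondecreasing and nonincreasing in $t$, and both converge to positive stationary solutions which by uniqueness must equal $u_{\mathscr D}$. Strong positivity of $u(t_0,\cdot)$ for some $t_0>0$ (from $u_0\not\equiv 0$ via the maximum principle) lets us sandwich $u$ between translates of $\underline U$ and $\overline U$, yielding $u(t,\cdot)\to u_{\mathscr D}$ in $C(\bar{\mathscr D})$. When $\lambda_p\leq 0$, the KPP bound $f(u)\leq f_0 u$ makes $M e^{\lambda_p t}\phi$ a supersolution of (\ref{eq5-1}) for $M$ large, so $u(t,\cdot)\to 0$ if $\lambda_p<0$; the borderline $\lambda_p=0$ is handled by noting that any $\omega$-limit would be a nonnegative stationary solution, which by the previous step must be identically $0$ thanks to strict KPP monotonicity. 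The main obstacle is precisely this borderline case, where the linear supersolution only gives boundedness and one must combine strictness of $f(u)/u<f_0$ with the nonexistence result to close the argument; a secondary worry, that the advection term $-\nu u_x$ could spoil the maximum principle, is already resolved by the transformation $w=e^{-\xi t}u$ of Lemma \ref{lemm2.1}, which carries over verbatim to the fixed domain $\mathscr D$.
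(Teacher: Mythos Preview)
Your outline is correct and follows the classical sub/supersolution--monotone-iteration route, together with the KPP sliding argument for uniqueness and the eigenvalue comparison for nonexistence; the treatment of the borderline $\lambda_p=0$ via the $\omega$-limit set being a nonnegative steady state (hence $0$) is also the standard way to close that case.

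It is worth pointing out, however, that the paper does not actually prove this proposition: it is stated immediately after the sentence ``According to the arguments in \cite{bates2007} and \cite{Coville2010}, it implies'' and is left without proof, the reader being referred to those two references. What you have written is, in effect, a reconstruction of the arguments in \cite{bates2007} (for the monotone-semiflow convergence on a bounded domain) and \cite{Coville2010} (for the existence/uniqueness of the positive stationary solution via the principal eigenvalue), with the advection term $-\nu u_x$ carried along and handled by the same maximum-principle transformation $w=e^{-\xi t}u$ as in Lemma~\ref{lemm2.1}. So there is no discrepancy in approach to report: you have supplied the details the paper omits. One small cosmetic point: in your regularity remark you wrote ``$\nu u_x'$'' where you mean ``$\nu u'$'' (or ``$\nu u_x$''); the bootstrap you describe---solve the stationary equation for $u'$ and use $J\in C^1$ from $(\mathbf{J})$---is exactly right and yields $u_{\mathscr D}\in C^1(\bar{\mathscr D})$.
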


\begin{theorem}\label{th5.1}
Assume that $(\mathbf{J})$ and $(\mathbf{f1})-(\mathbf{f4})$ hold. If $h_\infty- g_\infty<\infty$, then 
\begin{equation}\label{eq4-01}
\lambda_p(\mathcal L^{d}_{(g_\infty,h_\infty)}+f_0-\nu\nabla)\leq 0
\end{equation}
 and
$\lim\limits_{t\rightarrow\infty}u(t, x)=0$ uniformly in $[g(t),h(t)]$. 
\end{theorem}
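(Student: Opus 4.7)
The plan is to split the statement into two pieces: first establish the uniform decay $u(t,x)\to 0$, and then derive the eigenvalue inequality \eqref{eq4-01} by a contradiction argument against the decay.

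To prove uniform decay, I would start from the hypothesis $h_\infty-g_\infty<\infty$ together with the monotonicity \eqref{eq2-13} to conclude that both $h'(t)$ and $-g'(t)$ are positive and summable, so the argument of Lemma~3.1 in the style of \cite{du2019species} forces $h'(t),g'(t)\to 0$ as $t\to\infty$. Plugging this into the free-boundary equations in $(\ref{eq1-1})$ and invoking the strict positivity of the tail integrals of $J$ in the spirit of Lemma~3.2 in \cite{du2019species} would yield $\int_{g(t)}^{h(t)} u(t,x)\,dx\to 0$ and the global integrability $\int_0^\infty\!\int_{g(t)}^{h(t)} u(t,x)\,dx\,dt<\infty$. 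Extending $u$ by zero outside $[g(t),h(t)]$ and applying Fubini then gives $\int_0^\infty u(t,x)\,dt<\infty$ for a.e.\ $x\in(g_\infty,h_\infty)$, which, combined with the uniform boundedness of $u_t$ coming from $(\ref{eq1-1})$ and the regularity statement in Theorem~\ref{th2-1}, forces $u(t,x)\to 0$ a.e.

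The main obstacle is upgrading a.e.\ convergence to \emph{uniform} convergence on the moving interval. For this I would set $M(t):=\max_{x\in[g(t),h(t)]} u(t,x)$ and, following a Danskin-type argument, compute its one-sided derivatives at interior maximizers $\underline{x}^*(t),\overline{x}^*(t)\in X(t):=\{x:u(t,x)=M(t)\}$:
\begin{equation*}
M'(t+0)=u_t(t,\overline{x}^*(t)),\qquad M'(t-0)=u_t(t,\underline{x}^*(t)).
\end{equation*}
Assuming for contradiction that $\sigma^*:=\limsup_{t\to\infty}M(t)>0$, one can extract a sequence $t_n\to\infty$ and $x^*_n\in X(t_n)$ with $u_t(t_n,x^*_n)\to 0$ and $u_x(t_n,x^*_n)\to 0$ (the latter from $x^*_n$ being an interior maximizer). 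Using $\int_{g(t)}^{h(t)} J(x-y)u(t,y)\,dy\to 0$ uniformly (from the $L^1$ decay of $u$ and boundedness of $J$) and substituting into the PDE yields $0\ge -d\sigma^*+f(\sigma^*)$. Combined with the a.e.\ pointwise decay $u(t,x_0)\to 0$ at some interior point $x_0\in(-h_0,h_0)$ and the KPP hypothesis $(\mathbf{f4})$ giving $f(u)/u\ge f_0$ for small $u$, this produces an exponential growth lower bound at $x_0$ that contradicts boundedness, provided $\nu<\tilde c$ as in $(\ref{eq1-2-2-2})$ and $f_0>d+\nu$ in the relevant regime; otherwise, the argument at the maximizer already contradicts $f(\sigma^*)>0$ for $\sigma^*\in(0,1)$.

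For the eigenvalue inequality, I argue by contradiction: suppose $\lambda_p(\mathcal{L}^d_{(g_\infty,h_\infty)}+f_0-\nu\nabla)>0$. By the continuity in the interval length from Property~\ref{per1}(1) and the continuity in the zeroth-order term, there exists $\varepsilon>0$ small such that $\lambda_p(\mathcal{L}^d_{(g_\infty+\varepsilon,h_\infty-\varepsilon)}+f_0-\nu\nabla-\varepsilon)>0$. By the uniform decay just established, for all $t$ large enough, $g(t)<g_\infty+\varepsilon<h_\infty-\varepsilon<h(t)$ and $f(u(t,x))\ge(f_0-\varepsilon)u(t,x)$ on $[g_\infty+\varepsilon,h_\infty-\varepsilon]$. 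Apply Proposition~\ref{pro5.1} on the fixed interval $\mathscr{D}=(g_\infty+\varepsilon,h_\infty-\varepsilon)$ with the truncated nonlinearity $f_\varepsilon(u)=f(u)-\tfrac{\varepsilon}{1+\varepsilon}u$: the positive steady state $u_{\mathscr{D}}$ exists, and the solution of the associated problem with initial data $u(T_\varepsilon,\cdot)>0$ converges to $u_{\mathscr{D}}$. The Comparison principle (Lemma~\ref{th3-3.3} applied to the solution of $(\ref{eq1-1})$ as an upper solution of the fixed-domain problem) then forces $\liminf_{t\to\infty}u(t,x)\ge\tfrac{1}{2}u_{\mathscr{D}}(x)>0$ on $\mathscr{D}$, contradicting the uniform decay. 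Hence \eqref{eq4-01} holds. I expect the delicate step to be the Danskin-type extraction together with confirming that $u_x(t_n,x^*_n)\to 0$ at interior maximizers---this uses the $C^{1+\alpha,1}$ regularity from Theorem~\ref{th2-1}, which is precisely why the advection forced regularity in $x$.
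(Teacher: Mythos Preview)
Your strategy inverts the paper's order, and the inversion creates a real gap. The paper proves the eigenvalue inequality \emph{first}, without ever touching decay of $u$: assuming $\lambda_p(\mathcal L^d_{(g_\infty,h_\infty)}+f_0-\nu\nabla)>0$, one shrinks the interval slightly so that the eigenvalue stays positive on $(g_\infty+\epsilon,h_\infty-\epsilon)$, waits until $[g(t),h(t)]$ contains this interval, and compares $u$ from below with the solution of the fixed-domain problem there. By Proposition~\ref{pro5.1} that solution converges to a positive steady state, so $u(t,\cdot)\ge \tfrac12 w_\epsilon(\cdot)>0$ for large $t$. Plugging this lower bound into the free-boundary equation for $h'(t)$ and using $J(0)>0$ gives $h'(t)\ge\delta>0$ for all large $t$, which contradicts $h_\infty<\infty$ directly. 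Once \eqref{eq4-01} is known, the uniform decay is a one-line comparison: bound $u$ above by the solution $\bar u$ on the fixed interval $(g_\infty,h_\infty)$ with extended initial data; since the eigenvalue is nonpositive, Proposition~\ref{pro5.1} gives $\bar u\to 0$ uniformly.

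Your route instead tries to prove uniform decay first via the $M(t)$ argument, and this is where the gap sits. The limit at the maximizer yields $f(\sigma^*)\ge d\sigma^*$, which forces $f_0>d$ but is not itself a contradiction. You then pass to a point $x_0$ with $u(t,x_0)\to 0$ and want exponential growth from
\[
u_t(t,x_0)\ge -d\,u(t,x_0)-\nu\,u_x(t,x_0)+f(u(t,x_0)).
\]
But the advection term $-\nu u_x(t,x_0)$ has no sign control: nothing in the setup bounds $|u_x(t,x_0)|$ by a multiple of $u(t,x_0)$, so you cannot conclude $u_t\ge c\,u$ at $x_0$. Your caveat ``provided $f_0>d+\nu$'' does not rescue this, and in any case leaves the regime $d<f_0\le d+\nu$ uncovered, while the theorem makes no such assumption. (Incidentally, $(\mathbf{f4})$ gives $f(u)/u\le f_0$, not $\ge f_0$.) This is precisely the difficulty the advection introduces, and the paper sidesteps it entirely by reversing the order: contradict $h_\infty<\infty$ rather than the decay.
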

\begin{proof}
Now we divide our proof into two steps.

Step~1:  We aim to show
$$
\lambda_{p}(\mathcal{L}_{(g_{\infty}, h_{\infty})}^d+f_0-\nu\nabla) \leq0 .
$$
Suppose that $\lambda_{p}(\mathcal{L}_{(g_{\infty}, h_{\infty})}^d+f_0-\nu\nabla)>0$,  according to Property~\ref{per1}, there is small $\epsilon_0>0$ 
such that  
\begin{equation}\label{eq4.3}
\lambda_{p}(\mathcal{L}_{(g_{\infty}+\epsilon, h_{\infty}-\epsilon)}^d+f_0-\nu\nabla)>0
\end{equation}
for  $\epsilon\in(0, \epsilon_0)$. 
Further, for the above $\epsilon$, there is $T_{\epsilon}>0$ such that
$$
h(t)>h_{\infty}-\epsilon, ~ g(t)<g_{\infty}+\epsilon, \text { for } t>T_{\epsilon} .
$$
Considering the following problem
\begin{equation}\label{eq5-2}
	\begin{cases}w_{t}=d \int_{g_{\infty}+\epsilon}^{h_{\infty}-\epsilon} J\left(x-y\right) w(t, y) {\rm d} y-d ~w-\nu w_x+f(w), & t>T_{\epsilon}, ~x \in\left[g_{\infty}+\epsilon, h_{\infty}-\epsilon\right], \\ w\left(T_{\epsilon}, x\right)=u\left(T_{\epsilon}, x\right), & x \in\left[g_{\infty}+\epsilon, h_{\infty}-\epsilon\right],
\end{cases}
\end{equation}
according to the assumption of $(\ref{eq4.3})$, by Proposition $\ref{pro5.1},$ the solution $w_{\epsilon}(t, x)$ of $(\ref{eq5-2})$ converges to the unique steady state $w_{\epsilon}(x)$ of $(\ref{eq5-2})$ uniformly in $\left[g_{\infty}+\epsilon, h_{\infty}-\epsilon\right]$ as $t \rightarrow+\infty$.
Further, by {the Maximum principle} and comparison argument, it follows 
$$
u(t, x) \geq w_{\epsilon}(t, x), \text { for } t>T_{\epsilon}\text { and } x \in\left[g_{\infty}+\epsilon, h_{\infty}-\epsilon\right].
$$
Then we can find a positive $\tilde T_{ \epsilon}>T_{\epsilon}$ such that
$$
u(t, x) \geq   \dfrac{1}{2}  w_{\epsilon}(x)>0,  \text { for } t>\tilde T_{ \epsilon} \text { and } x \in\left[g_{\infty}+\epsilon, h_{\infty}-\epsilon\right].
$$
By $(\mathbf{J})$, there exist $\tilde \epsilon>0$ and $\delta_{0}>0$ such that $J(x)>\delta_{0}$ for $x\in(-\tilde \epsilon,\tilde \epsilon)$. Then for   $0<\epsilon<\min \left\{\epsilon_{0}, \tilde \epsilon /2 \right\}$, we have
\begin{equation}\label{eq4.5}
\begin{aligned}
h^{\prime}(t)&=\mu \int_{g(t)}^{h(t)} \int_{h(t)}^{+\infty} J\left(x-y\right) u(t, x)  {\rm d} y{\rm  d} x
\\&\geq \mu \int_{g_{\infty}+\epsilon}^{h_{\infty}-\epsilon} \int_{h_{\infty}}^{+\infty} J\left(x-y\right) u(t, x)  {\rm d} y{\rm  d} x\\
&\geq \dfrac{1}{2} \mu \int_{h_{\infty}-\tilde \epsilon /2}^{h_{\infty}-\epsilon} \int_{h_{\infty}}^{h_{\infty}+\tilde \epsilon /2} \delta_{0} w_{\epsilon}(x)   {\rm d} y{\rm  d} x
\\&>0, 
\end{aligned}
\end{equation}
$\text{ for all  }t>\tilde T_{ \epsilon},$ which implies $h_{\infty}=+\infty$, contradicting the assumption that $h_{\infty}-g_{\infty}<+\infty$. Thus, it follows
$$
\lambda_{p}(\mathcal{L}_{\left(g_{\infty}, h_{\infty}\right)}^d+f_0-\nu\nabla ) \leq 0.
$$

Step~2: It turns to show that $u(t, x)$ converges to $0$ uniformly in $[g(t), h(t)]$ as $t \rightarrow+\infty$. 

Let $\bar{u}(t, x)$ be the solution of the following equation
$$
\begin{cases}
\bar{u}_{t}=d \int_{g_{\infty}}^{h_{\infty}} J\left(x-y\right) \bar{u}(t, y) {\rm d} y-d~ \bar{u}(t, x)-\nu \bar u_x+f(\bar{u}), & t>0,~x \in\left[g_{\infty}, h_{\infty}\right], \\ \bar{u}(0, x)=v_{0}(x), & x \in\left[g_{\infty}, h_{\infty}\right],
\end{cases}
$$
where
$$
v_0(x) =\begin{cases}
u_{0}(x),& \text { if }-h_{0} \leq x \leq h_{0},
\\ 0, & \text { if } x>h_{0} \text { or } x<-h_{0}.
\end{cases}
$$
By{ Maximum principle}, we can obtain that $$u(t, x) \leq \bar{u}(t, x), \text{ for } t>0 \text{ and } x \in[g(t), h(t)].$$
Furthermore, when
$
\lambda_{p}(\mathcal{L}_{(g_{\infty}, h_{\infty})}^d+f_0-\nu\nabla) \leq 0,
$
by Proposition $\ref{pro5.1}$, it follows that $\bar{u}(t, x) \rightarrow 0$ uniformly in $x \in[g_{\infty}, h_{\infty}]$ as $t \rightarrow+\infty$. Hence, $\lim\limits_{t\rightarrow\infty} u(t, x) = 0$ uniformly in $x \in[g(t), h(t)]$. 
\end{proof}

Moreover, as the spreading happens, we discuss the long-time asymptotic behaviors of the solution for system $(\ref{eq1-1})$.
\begin{lemma}\label{lem5.2}
	Let $\left(u, g, h\right)$ be the unique solution of $(\ref{eq1-1})$,
	 if $h_{\infty}-g_{\infty}=\infty$, then
	$g_{\infty}=-\infty$ and $h_{\infty}=\infty$
\end{lemma}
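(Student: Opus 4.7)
I would argue by contradiction, supposing without loss of generality that $h_\infty < \infty$; the case $g_\infty > -\infty$ is symmetric. Since we are given $h_\infty - g_\infty = \infty$, this assumption forces $g_\infty = -\infty$. The strategy is to exploit the unboundedness of the leftward front $g(t)$ to produce, for $t$ large, a fixed interior subinterval of $(g(t),h(t))$ on which $u$ stays uniformly positive near the right endpoint, and then to read off from the free-boundary velocity formula that $h'(t) \geq c > 0$ for all large $t$, contradicting $h_\infty < \infty$.

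First I would pick $L > 0$ large and $\epsilon > 0$ small so that $\lambda_p(\mathcal{L}^d_{(-L,\,h_\infty-\epsilon)} + f_0 - \nu\nabla) > 0$. This is a minor variant of Property \ref{per1}(3): only the length of the interval matters in the variational characterization, so the asymptotic $\lambda_p \to f_0 > 0$ as the interval grows carries over from symmetric to asymmetric intervals. Next, select $T_0$ so large that $g(t) < -L$ and $h(t) > h_\infty - \epsilon/2$ for all $t \geq T_0$, which is possible since $g(t) \to -\infty$ and $h(t) \nearrow h_\infty$. Then $\mathscr D := (-L,\,h_\infty-\epsilon) \subset (g(t),h(t))$ for all $t \geq T_0$.

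Next I would set up the auxiliary fixed-interval problem $(\ref{eq5-1})$ on $\mathscr D$ with initial value $u(T_0,\cdot)$ at time $T_0$. By Proposition \ref{pro5.1}, its solution $w(t,x)$ converges uniformly to the unique positive steady state $w_{\mathscr D}(x)$ of $(\ref{eq5-1})$ as $t \to \infty$. Since $(g(t),h(t)) \supset \mathscr D$ for $t \geq T_0$, the function $u$ is a supersolution on $\mathscr D$ of the equation satisfied by $w$ (the extra contribution from integrating $J(x-y)u(t,y)$ over $(g(t),h(t))\setminus\mathscr D$ is non-negative), and at both endpoints of $\mathscr D$ one has $u(t,\cdot) > 0 = w(t,\cdot)$. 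A comparison argument based on the Maximum principle (an adaptation of Lemma \ref{lemm2.2} to $\mathscr D$ with the drift $-\nu w_x$) yields $u(t,x) \geq w(t,x)$ on $\mathscr D$ for $t \geq T_0$, hence $u(t,x) \geq \tfrac{1}{2} w_{\mathscr D}(x) \geq c_0 > 0$ on $[h_\infty-2\epsilon,\,h_\infty-\epsilon]$ for all $t \geq T_1$, for some $T_1 \geq T_0$ and $c_0 > 0$.

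Finally I would feed this lower bound into the boundary velocity:
\[
h'(t) = \mu \int_{g(t)}^{h(t)} \int_{h(t)}^{\infty} J(x-y)\, u(t,x)\,dy\,dx \;\geq\; \mu c_0 \int_{h_\infty-2\epsilon}^{h_\infty-\epsilon} \int_{h(t)}^{h(t)+\eta} J(x-y)\,dy\,dx \;\geq\; c_1 > 0,
\]
where $\eta > 0$ and $c_1 > 0$ are obtained from the positivity and continuity of $J$ near the origin given by $(\mathbf{J})$, ensuring the integrand is bounded below on the indicated ranges. Integrating $h'(t) \geq c_1$ from $T_1$ to $\infty$ forces $h(t) \to \infty$, contradicting $h_\infty < \infty$. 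The main anticipated obstacle is the comparison step $u \geq w$: the Maximum principle of Lemma \ref{lemm2.2} is stated on a symmetric interval $[-h_0,h_0]$, so one must verify that the proof-by-contradiction at an interior minimum goes through on $\mathscr D$ with the drift term $-\nu w_x$; a secondary minor point is the mild extension of Property \ref{per1}(3) to asymmetric intervals, which is straightforward from the same variational argument.
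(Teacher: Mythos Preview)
Your proposal is correct and follows essentially the same route as the paper's proof: assume by contradiction that $h_\infty<\infty$ while $g_\infty=-\infty$, pick a large fixed subinterval on which the principal eigenvalue is positive, compare $u$ with the solution of the associated fixed-domain problem via Proposition~\ref{pro5.1} to obtain a uniform positive lower bound near the right end, and then read off $h'(t)\geq c_1>0$ from the free-boundary law exactly as in inequality~(\ref{eq4.5}). The paper's version is terser, simply writing ``applying the similar method in proving $(\ref{eq4.5})$,'' whereas you spell out the comparison step and the lower bound on the kernel explicitly; the two technical caveats you flag (extension of Property~\ref{per1} and of Lemma~\ref{lemm2.2} to asymmetric intervals) are indeed used implicitly in the paper's argument as well.
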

\begin{proof}
It suffices to show $h_{\infty}<+\infty$ if and only if $-g_{\infty}<+\infty$.
 Without loss of generality, we assume by contradiction that $h_{\infty}<+\infty$ and $g_{\infty}=-\infty$. According to Property~\ref{per1},  there exists $l_{*}>0$ such that $$\lambda_{p}(\mathcal{L}_{(-l_*, h_0)}^d+f_0-\nu\nabla)>0.$$
 Further, for any $0<\epsilon\ll1$ small, there exists $T_{\epsilon}>0$ such that $$h(t)>h_\infty-\epsilon>h_0,~ g(t)<-l_*,$$
 for $t>T_{\epsilon}$. Especially,
$$
\lambda_{p}(\mathcal{L}_{(-l_*, h_\infty-\epsilon)}^d+f_0-\nu\nabla)>\lambda_{p}(\mathcal{L}_{(-l_{*}, h_0)}^d+f_0-\nu\nabla)>0.
$$
Consider
\begin{equation}
\begin{cases}
	w_{t}=d \int_{-l_*}^{h_\infty-\epsilon} J\left(x-y\right) w(t, y) {\rm d} y-d~ w-\nu w_x+f(w), & t>T_{\epsilon}, ~x \in\left[-l_{*}, h_\infty-\epsilon \right], 
	\\ w\left(T_{\epsilon}, x\right)=u\left(T_{\epsilon}, x\right), & x \in\left[-l_{*}, h_\infty-\epsilon \right],
\end{cases}
\end{equation}
 applying the similar method in proving $(\ref{eq4.5})$ of the proof for Theorem $\ref{th5.1}$, it  can be  shown that $h^{\prime}(t)>0$ for all $t$ large enough, which contradicts $h_{\infty}<+\infty$. Thus, the theorem has been proved.
\end{proof}

Assuming that $(\mathbf{f} \mathbf{1}) \mathbf{- ( \mathbf { f }} \mathbf{4})$ hold,   it can be seen that there is  unique $\tilde u_{0} \in$ $\left(0, k_{0}\right)$ such that $f(\tilde u_0)=0$. Motivated by Proposition 3.6 in~\cite{cao2019},  we have the following result.

\begin{prop} \label{pro5.2}
	Assume that $(\mathbf{J})$ and $(\mathbf{f} \mathbf{1}) \mathbf{- ( \mathbf { f }} \mathbf{4})$  hold. Then there exists $L>0$ such that for every interval $(h_1, h_{2})$ with $h_{2}- h_1>L$, 
	we have $\lambda_{p}(\mathcal{L}_{( h_1, h_{2})}^d+f_0-\nu\nabla)>0$ and hence $(\ref{eq5-1})$ admits a unique positive steady state $u_{( h_1, h_{2})}$. Moreover,
\begin{equation}\label{eq5-5}
	\lim _{ h_2 -h_1 \rightarrow+\infty} u_{\left( h_1,  h_{2}\right)}=\tilde u_{0}  \text { locally uniformly in } \mathbb{R}.
\end{equation}
\end{prop}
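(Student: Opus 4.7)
My plan has two stages: first I extract the threshold $L$ and the existence of the steady state from material already established, and then I prove the locally uniform convergence via a monotone limit combined with a Liouville-type identification on $\mathbb R$.

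For the first stage, I would observe that the kernel $J(x-y)$ depends only on the difference of its arguments, the growth coefficient $f_0$ is a constant, and the advection operator $\nu\nabla$ is translation-invariant, so the principal eigenvalue $\lambda_p(\mathcal{L}_{(h_1,h_2)}^{d}+f_0-\nu\nabla)$ is invariant under simultaneous translation of both endpoints and depends only on the length $h_2-h_1$. Property~\ref{per1}(3) yields $\lim_{h\to\infty}\lambda_p(\mathcal{L}_{(-h,h)}^{d}+f_0-\nu\nabla)=f_0>0$, so I can pick $L>0$ such that $\lambda_p(\mathcal{L}_{(h_1,h_2)}^{d}+f_0-\nu\nabla)>0$ whenever $h_2-h_1>L$. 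Proposition~\ref{pro5.1} then supplies the unique positive steady state $u_{(h_1,h_2)}$.

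For the convergence, my first step is the upper bound $u_{(h_1,h_2)}\leq\tilde u_0$: plugging the constant $\tilde u_0$ into the stationary operator gives $d\tilde u_0\bigl[\int_{h_1}^{h_2}J(x-y)\,dy-1\bigr]+f(\tilde u_0)\leq 0$, since the bracket is non-positive and $f(\tilde u_0)=0$, so $\tilde u_0$ is a super-solution; combined with the convergence of the parabolic flow in Proposition~\ref{pro5.1}, this delivers the pointwise inequality. My second step is monotonicity in the domain: for $(h_1,h_2)\subset(\hat h_1,\hat h_2)$, the zero-extension of $u_{(h_1,h_2)}$ is a sub-solution on the larger interval (the equation is still satisfied on $(h_1,h_2)$, while on the annular region both the extension and its derivative vanish, $f(0)=0$, and only a non-negative contribution from the nonlocal integral remains), hence $u_{(h_1,h_2)}\leq u_{(\hat h_1,\hat h_2)}$ on $(h_1,h_2)$. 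Applying this monotonicity along an exhausting sequence $(-\rho_n,\rho_n)$ with $\rho_n\to\infty$, the increasing sequence $u_n:=u_{(-\rho_n,\rho_n)}$ is bounded above by $\tilde u_0$ and converges pointwise to some limit $U\colon\mathbb R\to(0,\tilde u_0]$. Passing to the limit in the stationary equation (dominated convergence for the nonlocal term and standard elliptic estimates to identify the first derivative), $U$ is a bounded positive solution on $\mathbb R$ of
\[d\int_{\mathbb R}J(x-y)U(y)\,dy-dU(x)-\nu U'(x)+f(U(x))=0.\]
A Liouville-type argument leveraging the non-increasing monotonicity of $f(u)/u$ from $(\mathbf{f4})$, in the spirit of \cite{cao2019,bates2007,Coville2010}, then forces $U\equiv\tilde u_0$. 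To conclude the locally uniform convergence for general long intervals, for each compact $K\subset\mathbb R$ I would translate coordinates so that $K$ lies inside a large centered symmetric subinterval $(-\rho,\rho)\subset(h_1,h_2)$ and sandwich $u_{(h_1,h_2)}$ between $u_{(-\rho,\rho)}$ and $\tilde u_0$ via the upper bound and domain-monotonicity.

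The step I expect to be the hardest is the Liouville-type uniqueness $U\equiv\tilde u_0$ in the presence of the advection $-\nu U'$. The advection contributes only a constant-coefficient first-order term that is compatible with translation-invariant comparison techniques, so the arguments of \cite{cao2019,Coville2010} should adapt, but I would need to verify carefully that the sliding/strong-maximum-principle machinery still rules out nonconstant bounded positive solutions when $\nu\neq 0$, presumably by combining translation invariance of the whole-line problem with the monotonicity of $f(u)/u$ to propagate the attained supremum $\tilde u_0$ throughout $\mathbb R$.
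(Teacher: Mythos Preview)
Your plan is correct and shares its skeleton with the paper's argument (which in the source is written out inside an \verb|\iffalse...\fi| block, with the published text simply citing Proposition~3.6 of \cite{cao2019}): both obtain the threshold $L$ from Property~\ref{per1}(3), both use domain monotonicity of the steady states, and both pass to a limit along an exhausting family of intervals. The genuine difference is in how the limit $U$ is identified.

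You pass to the whole-line equation and invoke a Liouville-type uniqueness result to force $U\equiv\tilde u_0$, flagging the advection term as the delicate point. The paper avoids this entirely by a two-center trick: it fixes arbitrary $z_1,z_2\in\mathbb R$, lets $\tilde u_i=\lim_{n\to\infty}u_{(z_i-n,\,z_i+n)}$, and shows $\tilde u_1\equiv\tilde u_2$ by sandwiching via domain inclusion (since $(z_1-n-N,\,z_1+n+N)\supset(z_2-n,\,z_2+n)$ for $N>|z_1-z_2|$). Because the operator $\mathcal L^d+f_0-\nu\nabla$ is translation-invariant, one also has $\tilde u_2(x)=\tilde u_1(x-(z_2-z_1))$; combining this with $\tilde u_1\equiv\tilde u_2$ and the arbitrariness of $z_2-z_1$ forces $\tilde u_1$ to be constant, and substituting into the equation identifies that constant as $\tilde u_0$. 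Dini's theorem then upgrades the monotone pointwise convergence to locally uniform convergence. This argument never needs to rule out non-constant bounded positive entire solutions, so the step you single out as hardest simply does not arise. Your route is valid---under $(\ref{eq1-2-2-2})$ the effective wave speed in the stationary whole-line equation lies below the minimal speed, so wave-like profiles are indeed excluded---but the paper's comparison-of-centers device is more elementary and self-contained. Your explicit supersolution bound $u_{(h_1,h_2)}\le\tilde u_0$ is a clean addition the paper does not use (it bounds by $k_0$ via the ODE comparison instead).
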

Further,
\begin{theorem}[Asymptotic limit]\label{ths5-6.2} Let $\left(u, g, h\right)$ 
be the 
	unique solution of $(\ref{eq1-1})$, if $h_\infty-g_\infty=\infty,$ 
	then $\lim\limits_{t\rightarrow\infty}u(t,x)=\tilde u_0$ locally uniformly in $\mathbb R$.
\end{theorem}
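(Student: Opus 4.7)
The plan is to sandwich $u(t,x)$ between an upper and a lower quantity, both of which converge to $\tilde u_0$, using the spreading assumption together with Lemma~\ref{lem5.2}, Proposition~\ref{pro5.1} and Proposition~\ref{pro5.2}. First I would invoke Lemma~\ref{lem5.2} to reduce to the case $g_\infty=-\infty$ and $h_\infty=+\infty$, which is what allows the free boundaries to eventually swallow any prescribed compact set.

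For the upper estimate, the idea is to dominate $u$ by the spatially constant solution of the kinetic ODE. Let $\bar U(t)$ solve $\bar U'=f(\bar U)$ with $\bar U(0)=\max\{\|u_0\|_{L^\infty},\,k_0\}$; by $(\mathbf{f2})$ and $(\mathbf{f4})$, $\bar U(t)\to \tilde u_0$ monotonically. Extending $u$ by zero outside $[g(t),h(t)]$ and using the Maximum principle (Lemma~\ref{lemm2.1} applied to $\bar U-u$, noting that the nonlocal integral of $\bar U$ over $[g(t),h(t)]$ does not exceed $\bar U$ because $\int_{\mathbb R}J=1$), one obtains $u(t,x)\le \bar U(t)$ on $\overline{D_{g,h}^\infty}$. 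Hence $\limsup_{t\to\infty}u(t,x)\le \tilde u_0$ uniformly in $x$.

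For the lower estimate, fix a compact set $K=[-R,R]$ and $\epsilon>0$. By Proposition~\ref{pro5.2} choose $h_1<-R<R<h_2$ with $h_2-h_1>L$ large enough that $u_{(h_1,h_2)}(x)\ge \tilde u_0-\epsilon$ on $K$. Since $g(t)\to-\infty$ and $h(t)\to+\infty$, there exists $T_0$ with $g(T_0)<h_1$ and $h(T_0)>h_2$, so $[h_1,h_2]\subset[g(t),h(t)]$ for all $t\ge T_0$. Let $w(t,x)$ solve the fixed-domain problem
\begin{equation*}
\begin{cases}
w_t=d\int_{h_1}^{h_2}J(x-y)w(t,y)\,{\rm d}y-d\,w-\nu w_x+f(w), & t>T_0,\ x\in[h_1,h_2],\\
w(T_0,x)=u(T_0,x), & x\in[h_1,h_2].
\end{cases}
\end{equation*}
On $[h_1,h_2]$, $u$ is a supersolution of this equation because $\int_{g(t)}^{h(t)}J(x-y)u(t,y)\,{\rm d}y\ge\int_{h_1}^{h_2}J(x-y)u(t,y)\,{\rm d}y$ by nonnegativity. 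By the comparison argument behind Proposition~\ref{pro5.1}, $u(t,x)\ge w(t,x)$ on $[T_0,\infty)\times[h_1,h_2]$; since $u(T_0,\cdot)>0$ there, $w(t,\cdot)\to u_{(h_1,h_2)}$ uniformly on $[h_1,h_2]$. Therefore $\liminf_{t\to\infty}u(t,x)\ge u_{(h_1,h_2)}(x)\ge \tilde u_0-\epsilon$ uniformly on $K$.

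Combining the two estimates and letting $\epsilon\to 0^+$ yields $u(t,x)\to \tilde u_0$ uniformly on $K$, and since $K$ was arbitrary, the convergence is locally uniform in $\mathbb R$. The delicate point I would expect to spend time on is verifying the supersolution property that feeds the comparison step: one must make sure the nonlocal term, the advection term $-\nu w_x$, and the boundary values of $u$ on $\{x=h_1,\,h_2\}$ (which are strictly positive rather than zero) are all handled consistently, so that $u-w\ge 0$ follows from the Maximum principle in Lemma~\ref{lemm2.1}. Everything else is a direct assembly of the eigenvalue and steady-state results already available.
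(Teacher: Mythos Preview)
Your proposal is correct and follows essentially the same sandwich argument as the paper: the upper bound via the ODE $\bar U'=f(\bar U)$ and the lower bound via comparison with the fixed-domain problem on an interval eventually contained in $[g(t),h(t)]$, whose steady state approaches $\tilde u_0$ by Proposition~\ref{pro5.2}. The only cosmetic difference is that the paper takes a sequence of times $t_n\to\infty$ and uses the expanding domains $[g(t_n),h(t_n)]$ directly rather than first fixing $K$ and $\epsilon$; for the comparison step on the fixed interval, note that Lemma~\ref{lemm2.2} (no boundary conditions required) is the cleaner tool rather than Lemma~\ref{lemm2.1}.
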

\begin{proof}
	According to Lemma~$\ref{lem5.2}$, $h_{\infty}-g_{\infty}=+\infty$ implies that $h_{\infty}$ and $-g_{\infty}$ are equal to $\infty$ simultaneously. Take an increasing sequence $\left\{t_{n}\right\}_{n \geq 1}$ satisfying
	$ t_{n}\rightarrow \infty$ as $n\rightarrow \infty$ and $$\lambda_{p}(\mathcal{L}_{(g(t_{n}), h(t_{n}))}^d+f_0-\nu)>0, 
	$$
$\text { for all } n \geq 1.$	Let $\underline{u}_{n}(t, x)$ be the unique solution of the following problem
	\begin{equation}\label{eq5-8}
	\begin{cases}
		\underline{u}_{t}=d \int_{g(t_n)}^{h(t_n)} J\left(x-y\right) \underline{u}(t, y) {\rm d }y-d~ \underline{u}(t, x)-\nu \underline {u}_x+f(\underline{u}), & t>t_{n}, ~x \in\left[g(t_n), h(t_n)\right], \\ \underline{u}\left(t_{n}, x\right)=u\left(t_{n}, x\right), & x \in\left[g(t_n), h(t_n)\right].
	\end{cases}
	\end{equation}
	By Proposition~$\ref{pro5.1}$ and the comparison argument, it follows that
	\begin{equation}\label{eq5-08}
	u(t, x) \geq \underline{u}_{n}(t, x) \text { in }\left[t_{n},+\infty\right) \times\left[g(t_n), h(t_n)\right].
	\end{equation}
	Since $\lambda_{p}(\mathcal{L}_{[g(t_n), h(t_n)]}^d+f_0-\nu\nabla)>0$, by Proposition $\ref{pro5.2}$, the problem $(\ref{eq5-8})$ admits a unique positive steady state $\underline{u}_{n}(x)$ and
	\begin{equation}\label{eq5-9}
	\lim_{t \rightarrow+\infty} \underline{u}_{n}(t, x)=\underline{u}_{n}(x) \text { uniformly in }\left[g(t_n), h(t_n)\right].
	\end{equation}
	By Proposition $\ref{pro5.2}$,
		\begin{equation}\label{eq5-10}
	\lim _{n \rightarrow \infty} \underline{u}_{n}(x)=\tilde u_{0} \text { uniformly for } x \text{ in any compact subset of } \mathbb{R}.
	\end{equation}
	According to $(\ref{eq5-08})-(\ref{eq5-10})$, we can get
	\begin{equation}
	\liminf _{t \rightarrow+\infty} u(t, x) \geq \tilde u_{0} \text { uniformly for } x \text{ in any compact subset of } \mathbb{R}.
	\end{equation}
Now it remains to prove that
	\begin{equation}\label{eq5-12}
	\limsup _{t \rightarrow+\infty} u(t, x) \leq \tilde u_{0}\text { uniformly for } x \text{ in any compact subset of } \mathbb{R}.
	\end{equation}
	Let $\overline{u}(t)$ be the solution of the following ordinary differential equation 
	$$
	\begin{cases}
	\overline {u}^{\prime}(t)=f(\overline {u}),&t>0,\\
	\overline{u}(0)=\left\|u_{0}\right\|_{L^\infty},
	 \end{cases}
	$$
	by the comparison principle,  it follows $$u(t, x) \leq \overline{u}(t), \text{ for } t>0 \text{ and } x \in[g(t), h(t)].$$
	Note that  $\overline {u}(t)$ converges to $\tilde u_{0}$ as $t \rightarrow \infty,$ then $(\ref{eq5-12})$ is proved.  Hence, we have completed the proof of this theorem.
\end{proof}

According to the monotonicity and continuity of $\lambda_{p}(\mathcal{L}_{(-h, 
h)}^{d}+f_0-\nu\nabla)$ in $h$ from Property~$\ref{per1}$,  when
\begin{equation}\label{s5-010}
f_0<d \enspace 
\end{equation}
holds,  
if $h$ is small enough, then
$$\lambda_{p}(\mathcal{L}_{(-h, 
h)}^{d}+f_0-\nu\nabla)<0;$$ 
if $h$ is large enough,  then $$\lambda_{p}(\mathcal{L}_{(-h, 
h)}^{d}+f_0-\nu\nabla)>0.$$
Hence, there is constant $h^{*}>0$ such that
\begin{equation}\label{s5-5.12}
\lambda_{p}(\mathcal{L}_{(-h^{*}, 
h^{*})}^{d}+f_0-\nu\nabla)=0.
\end{equation}
and $\lambda_{p}(\mathcal{L}_{(-h, 
h)}^{d}+f_0-\nu\nabla)>0,$ for $h>h^*$.

In the next discussions, we will further find out the critical conditions determining the vanishing or spreading for $(\ref{eq1-1})$. Assume that $(\ref{s5-010})$ always holds. We have the following results.
\begin{theorem}\label{th4.3}
Assume that $(\mathbf{J})$ and $(\mathbf{f} \mathbf{1}) \mathbf{- ( \mathbf { f }} \mathbf{4})$  hold. If $h_{0} \geq h^{*} $ then spreading always occurs for $(\ref{eq1-1})$. If $h_{0}<h^{*} $, then there exists $\tilde {\mu}_*>0$ such that vanishing occurs for $(\ref{eq1-1})$ if $0<\mu \leq \tilde {\mu}_*.$
\end{theorem}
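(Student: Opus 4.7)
The plan is to invoke the spreading--vanishing dichotomy (which follows by combining Theorem~\ref{th5.1}, Lemma~\ref{lem5.2} and Theorem~\ref{ths5-6.2}: if $h_\infty-g_\infty<\infty$ then $u\to 0$ uniformly, otherwise $g_\infty=-\infty$, $h_\infty=+\infty$ and $u\to \tilde u_0$ locally uniformly). Once this dichotomy is in hand, it suffices to rule out vanishing when $h_0\ge h^*$, and to force vanishing by constructing a suitable upper solution when $h_0<h^*$ and $\mu$ is small.

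\textbf{Case $h_0\ge h^*$.} Assume for contradiction that vanishing occurs. Then $h_\infty-g_\infty<\infty$ and, by Theorem~\ref{th5.1}, $\lambda_{p}(\mathcal{L}_{(g_\infty,h_\infty)}^{d}+f_0-\nu\nabla)\le 0$. On the other hand, since $h'(t)>0$ and $g'(t)<0$ for $t>0$, we have $h_\infty>h_0\ge h^*$ and $-g_\infty>h_0\ge h^*$. Let $h_*:=\min\{-g_\infty,h_\infty\}>h^*$; then $(-h_*,h_*)\subset(g_\infty,h_\infty)$, so by the (standard) domain monotonicity of the principal eigenvalue together with Property~\ref{per1}(1),
\[
\lambda_{p}(\mathcal{L}_{(g_\infty,h_\infty)}^{d}+f_0-\nu\nabla)\ \ge\ \lambda_{p}(\mathcal{L}_{(-h_*,h_*)}^{d}+f_0-\nu\nabla)\ >\ \lambda_{p}(\mathcal{L}_{(-h^*,h^*)}^{d}+f_0-\nu\nabla)=0,
\]
which contradicts the above. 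Hence spreading must occur.

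\textbf{Case $h_0<h^*$: construction of an upper solution.} Fix $h_1\in(h_0,h^*)$; by Property~\ref{per1} and \eqref{s5-5.12},
$-\lambda_1:=\lambda_{p}(\mathcal{L}_{(-h_1,h_1)}^{d}+f_0-\nu\nabla)<0$, and by \eqref{eq-303} there is a positive eigenfunction $\phi_1\in C^1([-h_1,h_1])$ bounded away from $0$. Choose $\gamma\in(0,\lambda_1)$ and define
\[
\bar h(t):=h_1-(h_1-h_0)e^{-\gamma t},\qquad \bar g(t):=-\bar h(t),\qquad \bar u(t,x):=M\,e^{-\gamma t}\phi_1(x),
\]
with $M$ to be fixed. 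Pick $M$ so large that $\bar u(0,x)=M\phi_1(x)\ge u_0(x)$ on $[-h_0,h_0]$, which is possible since $\phi_1$ has a positive lower bound on $[-h_0,h_0]\subset(-h_1,h_1)$. Using the KPP assumption $(\mathbf{f4})$ to bound $f(\bar u)\le f_0\bar u$, the inclusion $(\bar g(t),\bar h(t))\subset(-h_1,h_1)$, and the eigenvalue identity for $\phi_1$, one checks
\[
\bar u_t + d\,\bar u + \nu \bar u_x - d\!\int_{\bar g(t)}^{\bar h(t)}\! J(x-y)\bar u(t,y)\,dy - f(\bar u)\ \ge\ Me^{-\gamma t}\bigl(-\gamma+\lambda_1\bigr)\phi_1(x)\ \ge\ 0.
\]
For the free boundary inequalities, since $\phi_1$ is bounded on $[-h_1,h_1]$, there is a constant $C_*=C_*(J,\phi_1,h_1)$ such that
\[
\mu\!\int_{\bar g(t)}^{\bar h(t)}\!\!\int_{\bar h(t)}^{\infty}\! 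J(x-y)\bar u(t,x)\,dy\,dx\ \le\ \mu\,M\,C_*\,e^{-\gamma t},
\]
with the analogous bound for the leftward flux, while $\bar h'(t)=\gamma(h_1-h_0)e^{-\gamma t}$ and $\bar g'(t)=-\gamma(h_1-h_0)e^{-\gamma t}$. Hence the free boundary conditions in Lemma~\ref{th3-3.3} are satisfied provided
\[
\mu\ \le\ \tilde\mu_*:=\frac{\gamma(h_1-h_0)}{M\,C_*}.
\]

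\textbf{Conclusion.} With $M$ and $\tilde\mu_*$ chosen as above, $(\bar u,\bar g,\bar h)$ is an upper solution of \eqref{eq1-1} for every $\mu\in(0,\tilde\mu_*]$. The comparison principle (Lemma~\ref{th3-3.3}) then gives $g(t)\ge\bar g(t)\ge -h_1$ and $h(t)\le\bar h(t)\le h_1$ for all $t>0$, so $h_\infty-g_\infty\le 2h_1<\infty$. Theorem~\ref{th5.1} now yields $u(t,\cdot)\to 0$ uniformly, i.e.\ vanishing occurs. The main technical hurdle is the quantitative matching of the exponential decay rate $\gamma$ against $\lambda_1$ together with the free boundary outflux bound; both hinge on having $h_1$ strictly between $h_0$ and $h^*$ so that $\lambda_1>0$ and $\phi_1>0$ on $[-h_0,h_0]$, after which the choices of $\gamma$, $M$ and $\tilde\mu_*$ are routine.
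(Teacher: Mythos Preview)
Your proof is correct and follows essentially the same strategy as the paper: for $h_0\ge h^*$, both of you derive a contradiction with Theorem~\ref{th5.1} via monotonicity of $\lambda_p$; for $h_0<h^*$, both of you fix an intermediate radius $h_1\in(h_0,h^*)$ with negative principal eigenvalue, and build an upper solution whose $u$-component is a scaled, exponentially decaying eigenfunction, then pick $\mu$ small so that the free boundaries stay within $(-h_1,h_1)$.

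The execution differs slightly. The paper first solves an auxiliary nonlinear problem $\omega$ on the fixed domain $[-h_1,h_1]$, bounds it above by $\gamma e^{\lambda t/2}\Phi$, and then takes $(\omega,\hat g,\hat h)$ as the upper solution with $\hat h(t)=h_0+2\mu h_1\gamma\int_0^t e^{\lambda s/2}\,ds$. You skip the auxiliary problem and take $\bar u=Me^{-\gamma t}\phi_1$ directly as the supersolution component, with $\bar h(t)=h_1-(h_1-h_0)e^{-\gamma t}$; this is a bit more streamlined but equivalent in spirit. One minor point: your citation of \eqref{eq-303} for the eigenfunction is slightly off (that display concerns the one-sided problem on $(0,h)$ with a zero boundary condition), and your claim that $\phi_1$ is ``bounded away from $0$'' on all of $[-h_1,h_1]$ is stronger than what you actually use---you only need $\min_{[-h_0,h_0]}\phi_1>0$, which follows from continuity and strict positivity on the open interval. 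The paper makes the same tacit positivity assumption on its eigenfunction $\Phi$.
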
 

\begin{proof}
when $h_{0} \geq h^{*} $, assume by contradiction that  vanishing occurs, it yields  $-\infty<g_{\infty}< h_{\infty}<\infty \text{ and } h_\infty-g_\infty> 2h^{*}.$
According to Property~$\ref{per1}$ and $(\ref{s5-5.12})$, $$\lambda_{p}(\mathcal{L}_{\left(g_{\infty}, h_{\infty}\right)}^d+f_0-\nu\nabla)>0,$$ contradicting the Theorem \ref{th5.1}. Thus, if $h_{0} \geq h^{*}$, spreading always happens for (\ref{eq1-1}).

Now we mainly consider the case of $h_0<h
^*.$  Fix $\tilde h \in\left(h_{0}, h^{*}\right)$,  let $\omega (t, x)$ be unique solution of the following problem
\begin{equation}
\begin{cases}w_{t}(t, x)=d \int_{-\tilde h}^{\tilde h} J\left(x-y\right)w(t, y){\rm  d} y-d~ w-\nu w_x+f(w), & t>0, ~x \in[-\tilde h, \tilde h], \\ w(0, x)=u_{0}(x), & x \in\left[-h_{0}, h_{0}\right], \\ w(0, x)=0, & x \in[-\tilde h,-h_{0}) \cup(h_{0}, \tilde h],
\\w(t, \tilde h)=0,~w(t,-\tilde h)=0,&t>0.
\end{cases}
\end{equation}
The choice of $\tilde h$ makes
$$
\lambda:=\lambda_{p}(\mathcal{L}_{(-\tilde h, \tilde h)}^d+f_0-\nu\nabla)<0.
$$
Let ${\Phi}>0$ be an eigenfunction of $\lambda$  with 
 $\left\|{\Phi}\right\|_{L^\infty}=1$, then
$$
(\mathcal{L}_{(-\tilde h, \tilde h)}^d+f_0-\nu\nabla)   [{\Phi}](x)=\lambda {\Phi}(x), \text { for } x \in[-\tilde h, \tilde h].
$$
By $(\textbf{f4})$, we have 
\begin{equation}\label{eq4-12}
\begin{aligned}
	\omega_{t}(t, x) &=d \int_{-\tilde h}^{\tilde h} J (x-y) \omega (t, y){\rm  d } y-d ~\omega-\nu\omega_x+f(\omega) \\
	& \leq d \int_{-\tilde h}^{\tilde h} J(x-y) \omega(t, y) {\rm d} y-d ~\omega-\nu\omega_x+f_0 ~\omega.
\end{aligned}
\end{equation}
For given $\gamma>0$ which will be determined later,  and chosen $\tilde w=\gamma e^{\lambda t/2}{\Phi}$, we can obtain that
$$
\begin{aligned}
	& d \int_{-\tilde h}^{\tilde h} J\left(x-y\right) \tilde w(t, y) {\rm d} y-d~ \tilde w-\nu \tilde w_{x}+f_0~ \tilde w-\tilde w_{t}(t, x) \\
	=& \gamma e^{\lambda t/2}\left\{d \int_{-\tilde h}^{\tilde h} J\left(x-y\right){\Phi}(y) {\rm d} y-d ~{\Phi}-\nu{\Phi}^{\prime}+f_0 ~{\Phi}-\frac{\lambda}{2} {\Phi}\right\} \\
	=& \frac{ \lambda}{2} \gamma e^{\lambda t/2} {\Phi}<0 .
\end{aligned}
$$
Take $\gamma>0$ large such that $\gamma~{\Phi}>u_{0}$ in $[-\tilde h, \tilde h]$. Applying the Maximum principle to $\tilde w-\omega,$ we can get that
\begin{equation}\label{eq4-13}
\omega(t, x) \leq \tilde w(t, x)=\gamma e^{\lambda t/2} {\Phi} \leq \gamma e^{\lambda t/2}, \text{ for } t>0 \text{ and } x \in[-\tilde h, \tilde h].
\end{equation}

{Denote}
$$
\hat {h}(t)=h_{0}+2 \mu \tilde h \gamma \int_{0}^{t} e^{\lambda s/2} {\rm d} s, \text { for } t \geq 0,
$$
and $\hat{g}(t)=-\hat{h}(t),~t\geq 0.$ 

Next, we show that $(\omega, \hat{h}, \hat{g})$ is an upper solution of $(\ref{eq1-1})$.

Take
$$
0<\mu \leq \tilde {\mu}_*:=\frac{\lambda(h_{0}-\tilde h)}{4 \tilde h \gamma}, 
$$ 
since $\lambda <0,$ for any $t>0,$ 
we have
$$
\hat{h}(t)\leq h_{0}-\frac{4 \mu \tilde h \gamma }{\lambda}(1-e^{\lambda t/2})<h_{0}- \frac{4 \mu \tilde h \gamma }{\lambda} \leq \tilde h.
$$

Similarly, $-\tilde h<\hat{g}(t)<0,$ for any $t>0$. Thus $(\ref{eq4-12})$ gives
$$
\omega_{t}(t, x) \geq d \int_{\hat{g}(t)}^{\hat{h}(t)} J\left(x-y\right) \omega (t, y) {\rm d }y-d ~\omega-\nu\omega_x+f(\omega),  \text { for } t>0, ~x \in [\hat{g}(t), \hat{h}(t)].
$$
Due to $(\ref{eq4-13})$, it can be obtained that
$$
\int_{\hat{g}(t)}^{\hat{h}(t)} \int_{\hat{h}(t)}^{+\infty}J\left(x-y\right) \omega(t, x)  {\rm d} y{\rm  d} x<2 \tilde h \gamma  e^{\lambda t /2}.
$$
{Thus,}
$$
\hat{h}^{\prime}(t)=2 \mu \tilde h \gamma e^{\lambda t /2}>\mu \int_{\hat{g}(t)}^{\hat{h}(t)} \int_{\hat{h}(t)}^{+\infty} J\left(x-y\right) \omega (t, x) {\rm d} y {\rm d}x.
$$
And
$$
\hat{g}^{\prime}(t)<-\mu \int_{\hat{g}(t)}^{\hat{h}(t)} \int_{-\infty}^{\hat{g}(t)} J\left(x-y\right) \omega (t, x) {\rm d} y{\rm  d} x.
$$
Thus $(\omega, \hat{h}, \hat{g})$ is an upper solution of $(\ref{eq1-1})$.  By comparison principle, it follows
$$
u(t, x) \leq \hat{w}(t, x), ~g(t) \geq \hat{g}(t) \text { and } h(t) \leq \hat{h}(t), \text { for } t>0, ~x \in[g(t), h(t)].
$$
Thus,
$$
h_{\infty}-g_{\infty} \leq \limsup_{t \rightarrow+\infty} (\hat{h}(t)-\hat{g}(t)) \leq 2 \tilde h<+\infty.
$$

\end{proof}

\begin{theorem}\label{th4.4}
Assume that $(\mathbf{J})$ and $(\mathbf{f} \mathbf{1}) \mathbf{- ( \mathbf { f }} \mathbf{4})$  hold.	 If $h_{0}<h^{*} $, then there exists $\tilde{\mu}^*>0$ such that spreading happens to $(\ref{eq1-1})$ if $\mu>\tilde {\mu}^*$.
	\end{theorem}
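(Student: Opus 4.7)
The plan is to argue by the contrapositive of Theorem~\ref{th5.1}. Assume vanishing occurs for some $\mu$, so $h_\infty - g_\infty < \infty$. The operator $\mathcal{L}^d + f_0 - \nu\nabla$ acts translation-invariantly in $x$: a direct change of variable $\tilde{u}(x) = u(x - c)$ shows that on a translated interval the operator has the same spectrum, hence
\[
\lambda_p\bigl(\mathcal{L}^d_{(g_\infty,\, h_\infty)} + f_0 - \nu\nabla\bigr)
= \lambda_p\bigl(\mathcal{L}^d_{(-(h_\infty - g_\infty)/2,\, (h_\infty - g_\infty)/2)} + f_0 - \nu\nabla\bigr).
\]
By Theorem~\ref{th5.1} this must be $\leq 0$, so Property~\ref{per1}(1) together with the definition \eqref{s5-5.12} of $h^*$ forces $h_\infty - g_\infty \leq 2h^*$. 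Therefore it suffices to exhibit a finite $T$ at which $h(T) - g(T) > 2h^*$, since $h$ and $-g$ are strictly increasing in $t$ by \eqref{eq2-13}, whence $h_\infty - g_\infty \geq h(T) - g(T) > 2h^*$, a contradiction.

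To produce such a $T$, I would first secure a $\mu$-independent pointwise lower bound on $u$ over a short initial time strip. From (f2) and the Maximum principle, $u \leq K := \max\{k_0, \|u_0\|_\infty\}$ on $\overline{D}^T_{g,h}$ uniformly in $\mu$. Since $f$ is locally Lipschitz on $[0,K]$ by (f1), there is $L = L(K) > 0$ with $f(u) \geq -Lu$ on this range, and dropping the nonnegative nonlocal integral yields
\[
u_t + \nu u_x \;\geq\; -(d + L)\, u \quad \text{in } D^T_{g,h}.
\]
Integrating along characteristics $x(t) = x_0 + \nu t$ gives
\[
u(t,\, x_0 + \nu t) \;\geq\; u_0(x_0)\, e^{-(d+L) t},
\]
valid for every $x_0 \in [-h_0, h_0]$ and every $t$ for which the characteristic stays inside $[g(t), h(t)]$; fixing any $T_0 \in (0,\, h_0/\nu)$ and restricting $x_0 \in [-h_0,\, h_0 - \nu T_0]$ makes this automatic, since $g$ decreases while $h$ increases.

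Inserting this estimate into the boundary ODE for $h$ and introducing the stopping time $T_\mu := \inf\{t > 0: h(t) = 2h^*\} \in (0,\infty]$, then for $t \in [0,\, T_\mu \wedge T_0]$ the substitution $z = y - x$ (using $J$ even) gives
\[
h'(t) \;\geq\; \mu \int_{-h_0}^{h_0 - \nu t} u_0(x_0)\, e^{-(d+L) t}
\!\left(\int_{h(t) - x_0 - \nu t}^{\infty} J(z)\, dz\right) dx_0 \;\geq\; c_0\, \mu,
\]
where $c_0 > 0$ depends only on $u_0, d, L, \nu, T_0, h^*$ and $J$ but not on $\mu$: in this regime $h(t) - x_0 - \nu t$ ranges over a compact subset of $\mathbb{R}$ on which $z \mapsto \int_z^\infty J(s)\,ds$ is bounded below by some $\beta > 0$ (using $J(0) > 0$ and continuity from $(\mathbf{J})$). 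Integrating, $h(T_\mu \wedge T_0) \geq h_0 + c_0 \mu (T_\mu \wedge T_0)$. Setting $\tilde{\mu}^{*} := (2h^{*} - h_0)/(c_0 T_0)$, any $\mu > \tilde{\mu}^*$ forces either $T_\mu \leq T_0$ (so $h(T_\mu) = 2h^*$ and then $h(T_\mu + \epsilon) - g(T_\mu + \epsilon) > 2h^* + h_0$ for small $\epsilon > 0$ by strict monotonicity), or $T_\mu > T_0$ (so $h(T_0) > 2h^*$ directly); either alternative supplies the required $T$.

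The main obstacle is the genuine tension that large $\mu$ causes $h(t)$ to escape quickly, potentially shrinking $\int_{h(t) - x}^\infty J(z)\,dz$ when $J$ has limited tail. The stopping-time device resolves this by confining the estimate to the regime $h(t) \leq 2h^*$, which is precisely the target regime: if $h$ leaves this band the goal is already met. A secondary technicality—keeping the characteristics inside $[g(t), h(t)]$ up to time $T_0$—is handled by trimming the range of $x_0$ and choosing $T_0 < h_0/\nu$; note that $\nu < \tilde c$ from \eqref{eq1-2-2-2} keeps this restriction mild and independent of $\mu$.
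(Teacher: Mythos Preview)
Your overall strategy—show that vanishing forces $h_\infty-g_\infty\le 2h^*$, then exhibit a time at which $h(T)-g(T)>2h^*$ for large $\mu$—matches the paper's. The first reduction is fine. The difficulty is in the second step, and your argument has a genuine gap there.

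The problem is the claimed uniform lower bound $h'(t)\ge c_0\mu$ on $[0,T_\mu\wedge T_0]$. You justify it by saying that $z\mapsto\int_z^\infty J(s)\,ds$ is bounded below by some $\beta>0$ on the compact range of $z=h(t)-x_0-\nu t$. But that range is $[\,h(t)-h_0,\;h(t)+h_0-\nu t\,]\subset[0,2h^*+h_0]$, and nothing in $(\mathbf{J})$ forces $\int_{2h^*+h_0}^\infty J>0$. If $J$ has compact support in $[-R,R]$ with $R<2h^*-h_0$, then as soon as $h(t)>h_0+R$ every admissible $x_0\in[-h_0,h_0-\nu t]$ gives $h(t)-x_0-\nu t\ge h(t)-h_0>R$, so the inner integral vanishes identically and your estimate collapses to $h'(t)\ge 0$. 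The stopping time caps $h(t)$ at $2h^*$, but it does not keep $h(t)$ close enough to the strip $[-h_0+\nu t,\,h_0]$ on which your characteristic bound controls $u$; the gap between that strip and the front can exceed the reach of $J$. The appeal to ``$J(0)>0$ and continuity'' only gives positivity of $\int_z^\infty J$ for $z$ near $0$, not uniformly on $[0,2h^*+h_0]$.

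The paper sidesteps this by a different mechanism: it first establishes monotonicity of $(u_\mu,-g_\mu,h_\mu)$ in $\mu$ via the comparison principle, fixes a reference value $\tilde\mu$, and uses $u_\mu\ge u_{\tilde\mu}$ as the lower bound. Under the contradiction hypothesis that all $h_\mu(\infty)$ are bounded (by $\mathscr H_\infty$), one can choose $\tilde t$ so that $h_{\tilde\mu}(t)>\mathscr H_\infty-\epsilon_0/2$ for $t\ge\tilde t$, hence $h_\mu(t)-h_{\tilde\mu}(t)<\epsilon_0/2$. This keeps the front $h_\mu(t)$ within a $J$-neighbourhood of points where $u_{\tilde\mu}>0$, so only the \emph{local} positivity $J\ge\delta_0$ on $(-\epsilon_0,\epsilon_0)$ is needed. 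Integrating $h_\mu'$ over $[\tilde t,\infty)$ then gives a $\mu$-independent positive lower bound on the denominator in $\mu=\dfrac{h_\mu(\infty)-h_\mu(\tilde t)}{\int_{\tilde t}^\infty(\cdots)\,d\tau}$, while the numerator is at most $2h^*$, yielding the contradiction. The key difference is that the paper's lower barrier $u_{\tilde\mu}$ lives on the moving domain and tracks the front, whereas your characteristic barrier is anchored to the initial strip and cannot see the front once it has moved beyond the support radius of $J$.
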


\begin{proof}
 Suppose that $ h_{\infty}-g_{\infty}<+\infty$ for any $\mu>0,$ we will give a proof by contradiction.

According to Theorem $\ref{th5.1}$, we have $\lambda_{p}(\mathcal{L}_{(g_{\infty}, h_{\infty})}^d+f_0-\nu\nabla) \leq 0$, it then follows $$h_{\infty}-g_{\infty} \leq 2h^{*}.$$ Let $\left(u_{\mu}, g_{\mu}, h_{\mu}\right)$ represent the solution of (\ref{eq1-1}) to stress the dependence on $\mu$.  By the comparison principle, we can show the strict monotonicity of $u_{\mu}, g_{\mu}$  and $h_{\mu}$ in $\mu$. 

Denote
$$
h_{\mu} (\infty):=\lim _{t \rightarrow+\infty} h_{\mu}(t)\text{ and } g_{\mu}(\infty):=\lim _{t \rightarrow+\infty} g_{\mu}(t) .
$$
Then $h_{\mu} (\infty)$ is increasing in $\mu$ and $g_{\mu} (\infty)$ is decreasing in $\mu$. 

Denote
$$
\mathscr H_{\infty}:=\lim _{\mu \rightarrow+\infty} h_{\mu} (\infty)\in(0,\infty] \text{ and } \mathscr G_{\infty}:=\lim _{\mu \rightarrow+\infty} g_{\mu} (\infty) \in[-\infty,0).
$$
Since $J(0)>0$, there exist $0<\epsilon_{0}<h_0$ and $\delta_{0}>0$ such that $$J(x)\geq \delta_{0}, \text{ for }x\in(-\epsilon_{0},\epsilon_{0}).$$  And there exist  constants  $\tilde \mu$ and $ \tilde t$ such that  $$h_{\mu}(t)>\mathscr H_{\infty}-\frac{\epsilon_{0}}{2},\text{ and }  g_{\mu}(t)<\mathscr G_{\infty}+\frac{\epsilon_{0}}{2}$$ 
for any $\mu \geq \tilde \mu$ and $t \geq \tilde t$. Then, by $(\ref{eq1-1})$, it follows 
\begin{equation}
	\begin{aligned}
	&\mu=\dfrac{h_{\mu}(\infty)-h_{\mu}\left(\tilde t\right)} {\int_{\tilde t}^{+\infty} \int_{g_{\mu}(\tau)}^{h_{\mu}(\tau)} \int_{h_{\mu}(\tau)}^{+\infty} J\left(x-y\right) u_{\mu}(\tau, x) {\rm d } y {\rm d}  x {\rm d} \tau}
	\\&\leq \dfrac{2h^*}{\int_{\tilde t}^{2\tilde t} \int_{g_{\tilde \mu}(\tau)}^{h_{\tilde\mu}(\tau)} \int_{h_{\tilde \mu}(\tau)+\frac{\epsilon_{0}}{2}}^{+\infty} J\left(x-y\right) u_{\tilde\mu}(\tau, x) {\rm d } y {\rm d}  x {\rm d}\tau}     \\
		&\leq \dfrac{2h^{*}}{\delta_{0} \int_{\tilde t}^{2\tilde t} \int_{h_{\tilde \mu}(\tau)-\epsilon_{0}}^{h_{\tilde\mu}(\tau)} \int_{h_{\tilde\mu}(\tau)+\frac{\epsilon_{0}}{2}}^{h_{\tilde \mu}(\tau)+\epsilon_{0}} u_{\tilde \mu}(\tau, x) {\rm d } y {\rm d}  x {\rm d} \tau}
		 \\&=\dfrac{2h^{*}}{\frac{1}{2} \delta_{0} \epsilon_{0} \int_{\tilde t}^{2\tilde t} \int_{h_{\tilde \mu}(\tau)-\epsilon_{0}}^{h_{\tilde \mu}(\tau)} u_{\tilde \mu}(\tau, x) 
		 {\rm d } x {\rm d} \tau}
		\\&<+\infty.
	\end{aligned}
\end{equation}
Since $\mu$ can be taken large enough, it yields a contradiction.
Therefore, we can take
$$
\tilde{\mu}^*=1+\dfrac{4h^{*}}{\delta_{0} \epsilon_{0} \int_{\tilde t }^{2\tilde t} \int_{h_{\tilde \mu}(\tau)-\epsilon_{0}}^{h_{\tilde \mu}(\tau)} u_{\tilde \mu}(\tau, x)
 {\rm d} x {\rm d} \tau},
$$
the proof is completed.
\end{proof}
Now we can give a more explicit dichotomy description of $\mu$ as follows.
\begin{theorem}
Assume that $(\mathbf{J})$ and $(\mathbf{f} \mathbf{1}) \mathbf{- ( \mathbf { f }} \mathbf{4})$  hold. when $h_{0}<h^{*},$  there exists $\mu^{*}>0$ such that vanishing happens for $(\ref{eq1-1})$ if $0<\mu \leq \mu^{*}$ and spreading happens for $(\ref{eq1-1})$ if $\mu>\mu^{*}$.
\end{theorem}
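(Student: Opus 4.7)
The plan is to define $\mu^{\ast}$ as the infimum of the set of $\mu$-values for which spreading occurs, and to show that this infimum is \emph{not} attained, so that it cleanly separates the two regimes.

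Let $(u_\mu, g_\mu, h_\mu)$ denote the unique solution of (\ref{eq1-1}) with parameter $\mu$, and set
$$\Sigma_v := \{\mu > 0 : \text{vanishing happens for } (\ref{eq1-1})\}, \qquad \Sigma_s := \{\mu > 0 : \text{spreading happens for } (\ref{eq1-1})\}.$$
First I would record monotonicity in $\mu$: if $\mu_1 < \mu_2$, then viewing $(u_{\mu_2}, g_{\mu_2}, h_{\mu_2})$ as an upper solution of the free boundary problem with coefficient $\mu_1$, the Comparison Principle (Lemma~\ref{th3-3.3}) yields $u_{\mu_1}\le u_{\mu_2}$, $g_{\mu_1}\ge g_{\mu_2}$ and $h_{\mu_1}\le h_{\mu_2}$ on the common time interval. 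In conjunction with Theorems~\ref{th5.1} and \ref{ths5-6.2}, this shows that $\Sigma_v\sqcup \Sigma_s=(0,\infty)$ and that $\Sigma_s$ is up-closed: $\mu_1\in\Sigma_s$ and $\mu_2>\mu_1$ force $h_{\mu_2,\infty}-g_{\mu_2,\infty}\ge h_{\mu_1,\infty}-g_{\mu_1,\infty}=\infty$. Theorem~\ref{th4.3} gives $(0,\tilde\mu_\ast]\subset\Sigma_v$ and Theorem~\ref{th4.4} gives $(\tilde\mu^{\ast},\infty)\subset\Sigma_s$, so $\mu^{\ast}:=\inf\Sigma_s\in [\tilde\mu_\ast,\tilde\mu^{\ast}]\subset(0,\infty)$ and $\Sigma_s$ is either $(\mu^{\ast},\infty)$ or $[\mu^{\ast},\infty)$.

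The heart of the argument is to rule out $\mu^{\ast}\in\Sigma_s$. Arguing by contradiction, suppose spreading occurs at $\mu^{\ast}$. By Lemma~\ref{lem5.2}, $h_{\mu^{\ast},\infty}=+\infty$ and $g_{\mu^{\ast},\infty}=-\infty$, so one can fix $T_0>0$ with $h_{\mu^{\ast}}(T_0)-g_{\mu^{\ast}}(T_0)>2h^{\ast}$. The key technical input is continuous dependence of $(u_\mu,g_\mu,h_\mu)$ on $\mu$ over the compact time interval $[0,T_0]$; this can be extracted from the contraction-mapping construction in Section~\ref{s2}: the uniform $C^{1+\alpha,1}$ bound on $w$ and the $C^\alpha$ bounds (\ref{eq2-9}) hold uniformly as $\mu$ ranges over a compact neighborhood of $\mu^{\ast}$, so any sequence $\mu_n\to\mu^{\ast}$ has, by Arzel\`{a}--Ascoli, a subsequence along which $(u_{\mu_n},g_{\mu_n},h_{\mu_n})\to (\tilde u,\tilde g,\tilde h)$ uniformly on $[0,T_0]$. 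The limit solves (\ref{eq1-1}) with parameter $\mu^{\ast}$ and, by uniqueness in Theorem~\ref{th2-1}, equals $(u_{\mu^{\ast}}, g_{\mu^{\ast}}, h_{\mu^{\ast}})$. Consequently, for $\mu<\mu^{\ast}$ sufficiently close to $\mu^{\ast}$ one still has $h_\mu(T_0)-g_\mu(T_0)>2h^{\ast}$ and $u_\mu(T_0,\cdot)>0$ on $(g_\mu(T_0), h_\mu(T_0))$. Since $h_\mu$ is increasing and $g_\mu$ decreasing in $t$, Property~\ref{per1}(1)--(2) gives
$$\lambda_p\bigl(\mathcal L^{d}_{(g_{\mu,\infty}, h_{\mu,\infty})}+f_0-\nu\nabla\bigr)\ \ge\ \lambda_p\bigl(\mathcal L^{d}_{(g_\mu(T_0),h_\mu(T_0))}+f_0-\nu\nabla\bigr)\ >\ 0,$$
so Theorem~\ref{th5.1} forbids vanishing at this $\mu$; hence $\mu\in\Sigma_s$, contradicting $\mu^{\ast}=\inf\Sigma_s$. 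Therefore $\mu^{\ast}\notin\Sigma_s$, which gives $\Sigma_s=(\mu^{\ast},\infty)$ and $\Sigma_v=(0,\mu^{\ast}]$, as asserted.

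The main obstacle is the continuous dependence on $\mu$ over a fixed compact interval. The existence proof in Section~\ref{s2} yields well-posedness for each fixed $\mu$ on a short time $T$ given by (\ref{eq-T}), whose length itself depends on $\mu$ through $h_\ast$ and $g_\ast$. I expect the fix to be routine but requires care: for $\mu$ confined to a compact set one first extracts a uniform short time $T$, then repeats the difference estimates (\ref{eq2-023})--(\ref{eq2-22}) with $\mu$ also varying to see that the fixed point of $\mathcal F$ depends continuously on $\mu$, and finally iterates finitely many short steps to cover $[0,T_0]$. Once this continuity is established, the argument above closes the proof.
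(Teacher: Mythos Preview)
Your proposal is correct and follows essentially the same route as the paper: define $\mu^{*}$ as the boundary point between the vanishing and spreading regimes (you use $\inf\Sigma_s$, the paper uses $\sup\Omega$ with $\Omega=\Sigma_v$, which is the same number by the dichotomy), invoke monotonicity in $\mu$ via comparison, and then argue by contradiction that $\mu^{*}$ lies in the vanishing set by using continuous dependence of $(g_\mu(T),h_\mu(T))$ on $\mu$ at a fixed finite time. The paper's proof is actually terser than yours on the continuous-dependence step---it simply asserts the existence of $\epsilon>0$ with $-g_\mu(T)>h^{*}$ and $h_\mu(T)>h^{*}$ for $\mu\in(\mu^{*}-\epsilon,\mu^{*}+\epsilon)$---so your explicit discussion of how to extract this from the well-posedness argument in Section~\ref{s2} is, if anything, more careful than the original.
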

\begin{proof}
Denote $\mu^{*}:=\sup \Omega,$ where 
$
\Omega=\left\{\mu\mid \mu>0 \text { such that } h_{\infty}-g_{\infty}<+\infty\right\} .
$
Given Theorems \ref{th4.3} and \ref{th4.4}, we see that $0<\mu^*<+\infty$. Let $\left(u_{\mu}, g_{\mu}, h_{\mu}\right)$ be the solution of $(\ref{eq1-1})$ and set $$h_{\mu, \infty}:=\lim\limits_{t \rightarrow+\infty} h_{\mu}(t) \text{ and } g_{\mu, \infty}:=\lim \limits_{t \rightarrow+\infty} g_{\mu}(t).$$
Since $u_{\mu}, -g_{\mu}$ and $h_{\mu}$ are increasing in $\mu$. It can be obtained  that if $\mu_{0} \in \Sigma$, then $\mu \in \Omega$ for any $\mu<\mu_{0}$ and if $\mu_{0} \notin \Omega$, then $\mu \notin \Omega$ for any $\mu>\mu_{0}$. Thus,
\begin{equation}\label{eq4-15}
\left(0, \mu^{*}\right) \subseteq \Omega\text{ and } \left(\mu^{*},+\infty\right) \cap \Omega=\emptyset.
\end{equation}

Next we show that $\mu^{*} \in \Omega$ by contradiction. Suppose that $\mu^{*} \notin \Omega$. Then $h_{\mu^{*}, \infty}=-g_{\mu^{*}, \infty}=+\infty$. Thus there is $T>0$ such that $-g_{\mu^{*}}(t)>h^{*},$ $h_{\mu^{*}}(t)>h^{*}$ for $t \geq T$. Hence there exists $\epsilon>0$ such that  $-g_{\mu}(T)>h^{*}$ and 
$ h_{\mu}(T)>h^{*}$ for $\mu \in \left(\mu^*-\epsilon, \mu^*+\epsilon\right)$, which implies $\mu \notin \Omega$.  It contradicts $(\ref{eq4-15}).$ Therefore $\mu^{*} \in \Omega$.
\end{proof}

Given the above theorems, we 
can get the following spreading-vanishing theorems.

\begin{theorem}[Spreading-vanishing criteria] Assume that $J$
and $f$ satisfy the conditions of Theorem $\ref{th5.1}$.  Let 
	$\left(u, g, h\right)$ be the unique solution of $(\ref{eq1-1}),$  if $f_0<d,$
 then there exists a 
unique 
	$h^{*}>0$ such that
\newline{	{\rm (1)}} vanishing occurs and $h_{\infty}-g_{\infty} 
\leq 
	2h^{*};$
\newline{	{\rm (2)}} spreading  occurs when $h_{0} \geq h^{*};$
\newline{	{\rm (3)}} if $h_{0}<h^{*},$ then there exists a positive 
constant
	$\mu^{*}  >0$ such that vanishing occurs when 
	$\mu \leq \mu^{*}$ and spreading occurs when 
	$\mu>\mu^{*}.$
	\end{theorem}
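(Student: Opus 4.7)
The plan is to assemble this spreading–vanishing criteria as a direct synthesis of the earlier results in Section~\ref{s4}, rather than proving anything new from scratch. Since we are in the regime $f_0<d$, Property~\ref{per1}(3)--(4) tells us $\lambda_p(\mathcal{L}_{(-h,h)}^d+f_0-\nu\nabla)\to a_0=f_0<0$ as $h\to\infty$ (wait, that's backwards; actually $\lim_{h\to\infty}\lambda_p=f_0>0$ when we identify $a_0=f_0$, while $\limsup_{h\to0^+}\lambda_p\le f_0-d<0$). Combined with the strict monotonicity and continuity of $\lambda_p$ in $h$, the intermediate value theorem yields a unique $h^*>0$ with $\lambda_p(\mathcal{L}_{(-h^*,h^*)}^d+f_0-\nu\nabla)=0$, which is exactly $(\ref{s5-5.12})$. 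So the first step is simply to record the existence and uniqueness of $h^*$.

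For item~(1), I would argue by contradiction: if vanishing occurs then $h_\infty-g_\infty<\infty$, and Theorem~\ref{th5.1} gives $\lambda_p(\mathcal{L}^d_{(g_\infty,h_\infty)}+f_0-\nu\nabla)\le 0$. If instead $h_\infty-g_\infty>2h^*$, then the interval $(g_\infty,h_\infty)$ strictly contains some translate of $(-h^*,h^*)$, so by the strict monotonicity from Property~\ref{per1}(1) we would have $\lambda_p>0$, a contradiction. Hence $h_\infty-g_\infty\le 2h^*$.

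Item~(2) is exactly the first half of Theorem~\ref{th4.3}, so it is cited directly: the assumption $h_0\ge h^*$ forces $h_\infty-g_\infty\ge 2h_0\ge 2h^*$, and combined with item~(1), vanishing is impossible, so spreading must occur. For item~(3), the dichotomy follows from the preceding unnumbered dichotomy theorem (defining $\mu^*:=\sup\Omega$ with $\Omega=\{\mu>0:h_\infty-g_\infty<\infty\}$): Theorem~\ref{th4.3} guarantees $\mu^*>0$ (since small $\mu$ gives vanishing when $h_0<h^*$), Theorem~\ref{th4.4} guarantees $\mu^*<\infty$ (since large $\mu$ gives spreading), and the monotonicity of $(u_\mu,-g_\mu,h_\mu)$ in $\mu$ by the comparison principle yields the sharp threshold description.

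The proof is thus essentially bookkeeping, and the only genuine content is the monotonicity argument in step~(1) that converts the eigenvalue inequality $\lambda_p(\mathcal{L}^d_{(g_\infty,h_\infty)}+f_0-\nu\nabla)\le 0$ into the geometric bound $h_\infty-g_\infty\le 2h^*$. The subtle point to check is that $\lambda_p$ depends on the interval only through its length (so that comparing with the symmetric interval $(-h^*,h^*)$ is legitimate); this follows because the advection coefficient $\nu$ and the constant $f_0$ are translation-invariant, while the kernel $J$ is even by $(\mathbf{J})$, so a translation of the spatial variable does not change the eigenvalue. With that observation, the proof assembles cleanly from Theorems~\ref{th5.1}, \ref{th4.3}, \ref{th4.4} and the dichotomy theorem, and no further estimates are required.
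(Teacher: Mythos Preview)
Your proposal is correct and matches the paper's approach exactly: the paper does not give a separate proof for this theorem at all, simply stating ``Given the above theorems, we can get the following spreading-vanishing theorems,'' so your synthesis from Property~\ref{per1}, Theorem~\ref{th5.1}, Theorem~\ref{th4.3}, Theorem~\ref{th4.4} and the $\mu^*$-dichotomy theorem is precisely what the paper intends. Your observation about translation invariance of $\lambda_p$ (needed to compare $(g_\infty,h_\infty)$ with the symmetric interval $(-h^*,h^*)$) is the one nontrivial point, and the paper also uses it implicitly---it is justified in the proof of Property~\ref{per1}(4), where the authors note that $\lambda_p$ depends on the interval only through its length.
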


Furthermore, we also have the following spreading and vanishing dichotomy regimes.
\begin{theorem}[Spreading-vanishing dichotomy]
Let $(u, g, h)$  be the solution of  $(\ref{eq1-1})$, then one of the following regimes hold for $(\ref{eq1-1}):$
 \newline {\rm (1)} {vanishing}$:$ $h_\infty-g_\infty<\infty$ and $\lim\limits_{t\rightarrow\infty}u(t, x)=0$ uniformly in $[g(t),h(t)];$
 \newline {\rm (2)} {spreading}$:$ $h_\infty-g_\infty=\infty$ and 
	$\lim\limits_{t\rightarrow\infty}u(t,x)=\tilde u_0$ locally uniformly in $\mathbb R$.
\end{theorem}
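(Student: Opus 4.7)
The plan is to observe that the dichotomy is essentially the direct synthesis of the results already established in this section, so the proof reduces to a clean case-split combined with a verification that the two cases cover all possibilities and no further sub-cases occur. Since by $(\ref{eq2-13})$ the map $t\mapsto h(t)$ is strictly increasing and $t\mapsto g(t)$ is strictly decreasing, the limits $h_\infty$ and $g_\infty$ exist in $(h_0,\infty]$ and $[-\infty,-h_0)$ respectively, so the quantity $h_\infty-g_\infty\in(2h_0,\infty]$ is always well-defined. Thus exactly one of the alternatives $h_\infty-g_\infty<\infty$ or $h_\infty-g_\infty=\infty$ holds.

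In the first case, I would invoke Theorem~\ref{th5.1} directly: it provides both the bound $\lambda_p(\mathcal{L}^d_{(g_\infty,h_\infty)}+f_0-\nu\nabla)\leq 0$ and the uniform convergence $\lim_{t\to\infty}u(t,x)=0$ on $[g(t),h(t)]$, which is exactly regime $(1)$. No extra work is needed here; one simply notes that the finiteness of $h_\infty-g_\infty$ is the sole hypothesis required.

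In the second case, the preparatory Lemma~\ref{lem5.2} is the crucial input: if $h_\infty-g_\infty=\infty$, then necessarily both $h_\infty=+\infty$ and $g_\infty=-\infty$ hold simultaneously, so the solution's habitat eventually exhausts all of $\mathbb{R}$. With this bilateral unboundedness secured, Theorem~\ref{ths5-6.2} (via the sandwich of $u$ between $\underline u_n$ converging up to $\tilde u_0$ and the ODE super-solution $\bar u$ converging down to $\tilde u_0$) yields $\lim_{t\to\infty}u(t,x)=\tilde u_0$ locally uniformly in $\mathbb{R}$, which is regime $(2)$.

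I do not anticipate a substantive obstacle: the entire machinery—Maximum principle, Property~\ref{per1} on the principal eigenvalue, Proposition~\ref{pro5.1} and Proposition~\ref{pro5.2} on steady states, and the monotonicity of $(g,h)$—has already been assembled to cover each alternative. The only minor subtlety worth stating explicitly is the mutual exclusivity of the two regimes: if spreading holds then $u\to\tilde u_0>0$ locally uniformly, which precludes the uniform vanishing of $u$; conversely if vanishing holds then $h_\infty-g_\infty$ is finite by definition. Thus the two regimes are mutually exclusive and, by the observation above, jointly exhaustive, completing the dichotomy.
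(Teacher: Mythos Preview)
Your proposal is correct and matches the paper's approach: the paper does not give an explicit proof of this theorem, treating it as an immediate consequence of Theorem~\ref{th5.1} (for the vanishing case) and Lemma~\ref{lem5.2} together with Theorem~\ref{ths5-6.2} (for the spreading case), which is exactly the case-split you describe.
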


\section{Spreading speed}\label{s5}
In this section, we will mainly investigate the effects of the advection on the spreading speeds of double free boundaries for the problem $(\ref{eq1-1})$.
{We aim to find out the explicit differences between the leftward and 
rightward 
asymptotic spreading speeds induced by the advection term.} 

For the sake of simplicity, without loss of generality, we take $f(u):=u(1-u)$ in problem $(\ref{eq1-1})$ in the next discussions. Our results can be applied to other Fisher-KPP type or monostable type models.

The following theorem is our main result in this section which represents the propagation speed of the leftward front is strictly less than the rightward front for $\nu>0$. Simultaneously, it shows that the spreading speed for the $(\ref{eq1-1})$ is finite if and only if the following assumption is satisfied:
\newline $\left(\mathbf{J_*}\right)$  $\int_{0}^{\infty}x J(x)dx<\infty.$

\begin{thm}\label{th1-1}
	Assume that {\rm($\textbf{J}$)} and $(\mathbf{f} \mathbf{1}) \mathbf{- ( \mathbf { f }} \mathbf{4})$  hold. If {\rm($\mathbf{J_*}$)} is also satisfied, when the 
	spreading occurs, the asymptotic spreading speeds of the leftward front 
	and the rightward front for the problem $(\ref{eq1-1})$ satisfy
	\begin{equation}
	\begin{array}{ll}
			\lim\limits_{t\rightarrow\infty}\dfrac{g(t)}{t}=c_{l}^*,
			~\lim\limits_{t\rightarrow\infty}\dfrac{h(t)}{t}=c_{r}^*.
				 \end{array}
	\end{equation} 
Moreover,
\begin{equation}
	0<c_{l}^*<c^*<c_{r}^*,~\lim\limits_{\nu\rightarrow 
	0}c_{l}^*=\lim\limits_{\nu\rightarrow 0}c_{r}^*=c^*.
	\end{equation}
	where 
$c^*$ is exactly the finite asymptotic spreading speed of the double free boundaries for the problem $(\ref{eq1-1})$ without the advection.
\end{thm}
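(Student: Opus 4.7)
The plan is to introduce two separate semi-wave problems corresponding to the rightward and leftward fronts, each capturing the effect of the advection on that side. For the rightward front, substituting $u(t,x)=\phi_r(x-ct)$ into $(\ref{eq1-1})$ yields the reduced problem
$$(c-\nu)\phi_r'(\xi)=d\int_{-\infty}^{0}J(\xi-\eta)\phi_r(\eta)\,d\eta-d\phi_r(\xi)+f(\phi_r(\xi)),\qquad \xi<0,$$
with $\phi_r(-\infty)=1$, $\phi_r(0)=0$, $\phi_r'<0$, coupled to the flux identity $c=\mu\int_{-\infty}^{0}\phi_r(\xi)\int_{-\infty}^{\xi}J(s)\,ds\,d\xi$. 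A symmetric computation at the left front produces a leftward semi-wave pair $(c,\psi_l)$ on $[0,\infty)$ governed by $(c+\nu)\psi_l'=d\int_{0}^{\infty}J\psi_l-d\psi_l+f(\psi_l)$ together with the analogous flux identity for $-g'(t)$. I would solve both reduced problems by exploiting their structural identity with the non-advective semi-wave of Du et al.~\cite{du2021semi}: the substitution $\tilde c:=c-\nu$ on the right and $\tilde c:=c+\nu$ on the left reduces the main equation to the standard semi-wave equation, which under $(\mathbf{J_*})$ admits for each $\tilde c$ in a suitable interval a unique monotone profile $\phi^{(\tilde c)}$, and the flux functional $F(\tilde c):=\mu\int_{-\infty}^{0}\phi^{(\tilde c)}(\xi)\int_{-\infty}^{\xi}J(s)\,ds\,d\xi$ is continuous and strictly decreasing in $\tilde c$ with $F(c^*)=c^*$ characterising the non-advective spreading speed.

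With $F$ in hand, $c_r^*$ is determined as the unique positive root of $F(c-\nu)=c$, and the leftward speed $c_l^*$ as the unique positive root of $F(c+\nu)=c$. Combining $F(c^*)=c^*$ with the strict monotonicity of $F$, the identities $F(c_r^*-\nu)=c_r^*$ and $F(c_l^*+\nu)=c_l^*$ force $c_r^*-\nu<c^*<c_l^*+\nu$, hence $c_r^*\in(c^*,c^*+\nu)$ and $c_l^*\in(c^*-\nu,c^*)$; using $(\ref{eq1-2-2-2})$ to guarantee $c^*-\nu>0$, this gives exactly the ordering $0<c_l^*<c^*<c_r^*$. Continuity of $F$ at $c^*$ then yields $c_r^*,c_l^*\to c^*$ as $\nu\to 0^+$, covering the remaining claim of the theorem.

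To turn the semi-wave information into the asymptotic behaviour of $h(t)$ and $g(t)$, I would construct, for any $\varepsilon>0$, an upper solution $(\bar u,\bar g,\bar h)$ with $\bar h(t)=(c_r^*+\varepsilon)t+K$ and $\bar u$ equal, up to a multiplicative slack $1+\delta$, to a suitable shift of $\phi^{(c_r^*+\varepsilon-\nu)}$ near $\bar h(t)$ glued to the corresponding leftward profile near $\bar g(t)=-(c_l^*+\varepsilon)t-K$. The strict monotonicity of $F$ produces the strict inequality needed in the boundary condition of Lemma~\ref{th3-3.3}, while $K$ is chosen large enough that $\bar u(0,\cdot)\geq u_0$, $\bar h(0)\geq h_0$ and $\bar g(0)\leq -h_0$; the comparison principle then gives $\limsup h(t)/t\leq c_r^*+\varepsilon$ and, letting $\varepsilon\to 0$, $\limsup h(t)/t\leq c_r^*$. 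The matching lower bound uses a lower solution built from $\phi^{(c_r^*-\varepsilon-\nu)}$, started after a large time at which Theorem~\ref{ths5-6.2} guarantees that $u(t,\cdot)$ already exceeds $\tilde u_0/2$ on a large central interval so that the semi-wave profile can be placed below $u$ on the initial slice. The statement for $g(t)$ follows by the strictly analogous argument using $\psi_l$.

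The main technical obstacle will be the construction of the lower solution: the advection term $-\nu u_x$ destroys the left--right symmetry that is usually exploited in nonlocal free-boundary problems, so the rightward subsolution must sit asymmetrically inside the spreading region with two different speeds on the two fronts and the two inequalities in Lemma~\ref{th3-3.3} have to be verified separately with the correct mismatched speeds. A secondary delicate point is the strict monotonicity of $F$ in a neighbourhood of $c^*$, since it is precisely this strict decrease that separates $c_r^*$ from $c^*$ and $c^*$ from $c_l^*$; establishing it cleanly will require a sliding/eigenfunction argument at the semi-wave level, adapted from the non-advective construction of Du et al.~\cite{du2021semi}.
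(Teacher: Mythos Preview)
Your plan is essentially the paper's own strategy: reduce the advective semi-wave to the non-advective one via the shift $c\mapsto c\mp\nu$, characterise $c_r^*$ and $c_l^*$ through the flux functional, and sandwich $h(t)/t$ and $-g(t)/t$ by comparison with perturbed semi-waves. Two technical choices differ. First, where you perturb the \emph{speed} and use a multiplicative slack $1\pm\delta$, the paper perturbs the \emph{reaction}, replacing $f(u)=u(1-u)$ by $f_i(u)=u\bigl(1-u/(1\pm\epsilon)\bigr)$; the resulting pairs $(c_{r,i},\Phi_{r,i})$ satisfy $\Phi_{r,i}(-\infty)=1\pm\epsilon$ and $c_{r,2}<c_r^*<c_{r,1}$ directly (Proposition~\ref{prop1-2}), so the ordering at the profile level and the initial comparison $u(T,\cdot)\lessgtr 1\pm\epsilon/2$ come for free without matching $\delta$ to $\epsilon$. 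Second, the paper does \emph{not} glue the two fronts: each of the four bounds (Lemmas~\ref{le5-1}--\ref{le5-4}) uses a single half-line semi-wave and compares only one moving boundary at a time. Your anticipated ``main technical obstacle'' of an asymmetric two-front lower solution is therefore bypassed rather than confronted; the one-sided argument is shorter, though it tacitly relies on the other boundary being harmless. Your ordering argument via the single functional $F$ and the equations $F(c\mp\nu)=c$ is, incidentally, a cleaner packaging of the paper's three functionals $\mathscr F_l,\mathscr F,\mathscr F_r$.

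One small slip: assumption~$(\ref{eq1-2-2-2})$ says $\nu<\tilde c$, not $\nu<c^*$, and since $c^*=c^*(\mu)\in(0,\tilde c)$ can be arbitrarily small for small $\mu$, the bound $c_l^*>c^*-\nu$ does not by itself yield $c_l^*>0$. The positivity of $c_l^*$ follows instead directly from its characterisation as the unique root of $c=F(c+\nu)$ on $(0,\tilde c-\nu)$, which your own setup already provides.
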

However,  under the assumptions of the above theorem,
\begin{thm}\label{th7.8}
	If $\left(\mathbf{J_*}\right)$ is not satisfied, as the spreading occurs, it follows
	\[\lim_{t\rightarrow 
		\infty}\dfrac{h(t)}{t}=\lim_{t\rightarrow 
		\infty}\dfrac{-g(t)}{t}=\infty.\]
\end{thm}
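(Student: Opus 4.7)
The plan is to argue by contradiction. Suppose $\limsup_{t\to\infty} h(t)/t<\infty$, so there exists $C_0>0$ with $h(t)\le C_0 t$ for all sufficiently large $t$ (the analogous bound for $-g(t)$ is handled symmetrically by the same reasoning applied to the left free boundary). Since spreading occurs, Theorem~\ref{ths5-6.2} gives $u(t,x)\to \tilde u_0$ locally uniformly in $\mathbb R$. Hence, for each fixed $L>0$ one can find $T_L>0$ such that $u(t,x)\ge \tilde u_0/2$ on $[-L,L]$ for every $t\ge T_L$, along with $g(t)<-L$ and $h(t)>L$. Substituting into the free-boundary equation for $h$ in $(\ref{eq1-1})$ and using the symmetry of $J$, one derives the differential inequality
$$ h'(t)\;\ge\; \frac{\mu\tilde u_0}{2}\int_{-L}^{L} G(h(t)-x)\,dx\;=\;\frac{\mu\tilde u_0}{2}\int_{h(t)-L}^{h(t)+L} G(r)\,dr,\qquad t\ge T_L, $$
where $G(r):=\int_r^{\infty} J(s)\,ds$.

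The crucial analytic ingredient is that the failure of $(\mathbf{J_*})$ is, by Fubini's theorem, equivalent to the divergence $\int_0^{\infty} G(r)\,dr=\int_0^{\infty} s J(s)\,ds=+\infty$. Thus $\Psi(R):=\int_0^R G(r)\,dr\to\infty$ as $R\to\infty$, and by choosing $L$ large one makes $\int_{h-L}^{h+L}G$ as large as one wishes over any prescribed bounded range of $h$. This divergent first moment of $J$ is exactly what rules out a finite spreading speed.

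To close the argument, one integrates the above inequality from $T_L$ to $T$ and exchanges the order of integration: for each $r$, the set $\{t\in[T_L,T]:h(t)\in[r-L,r+L]\}$ is an interval (by the strict monotonicity of $h$ from $(\ref{eq2-13})$) whose length one controls from below using $h(t)\le C_0 t$. Combined with the divergence of $\Psi$, one then selects a sequence $L_n\to\infty$ with $T_n:=T_{L_n}$ so that the resulting lower bound on $h(T_n)$ eventually exceeds $C_0 T_n$, giving the contradiction. An equivalent and perhaps cleaner route is to build a lower solution $(\underline u,\underline g,\underline h)$ with $\underline u\equiv\eta>0$ on a widening plateau (with smooth transitions to $0$ at the free boundaries) and $\underline h(t)=ct+h_0$, $\underline g(t)=-ct-h_0$ for any preassigned $c>0$; the free-boundary inequality $\underline h'(t)\le \mu\eta\,\Psi(\underline h(t)-\underline g(t))$ then holds for $t$ large because $\Psi\to\infty$, and the comparison principle (Lemma~\ref{th3-3.3}) forces $h(t)\ge\underline h(t)$. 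Sending $c\to\infty$ yields $h(t)/t\to\infty$.

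The main obstacle is the delicate balance required in the choice of $L$: it must grow fast enough that $\int_{h-L}^{h+L}G$ eventually dominates $C_0$, yet not so fast that $T_L$ outruns the time window being integrated over. The divergence $\int_0^\infty G=\infty$ is exactly what permits such a compromise, and is the place where the failure of $(\mathbf{J_*})$ is used in an essential way. In the lower-solution approach, the corresponding technical point is verifying the PDE inequality inside the boundary layer where $\underline u$ transitions from $\eta$ to $0$; this is handled using the Fisher--KPP structure $(\mathbf{f4})$ together with the smallness of $\eta$. The argument for $-g(t)/t\to\infty$ is identical, applied to the boundary equation for $g(t)$, which has the same structural form as that for $h(t)$.
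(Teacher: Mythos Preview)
Your strategy is genuinely different from the paper's and the core obstacle you flag is not merely ``delicate'' --- as written it is an actual gap. The paper does not argue directly from the free-boundary ODE. Instead it truncates the kernel: one chooses a sequence of even $C^1$ functions $J_n$ with compact support satisfying $J_n\le J_{n+1}\le J$ and $J_n\to J$ in $L^1(\mathbb R)$; each $J_n$ satisfies $(\mathbf{J_*})$, so the semi-wave theory already developed (Lemma~\ref{le-05.1} and its analogues) produces speeds $c_{r,2}^n$ and profiles $\Phi^n_{r,2}$. Because $J_n\le J$, the lower-solution construction of Lemma~\ref{le5-1} carried out with $J_n$ furnishes a lower solution for the original problem, giving $\liminf_{t\to\infty} h(t)/t\ge c_{r,2}^n$ for every $n$. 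One then proves $c_{r,2}^n\to\infty$ (this is where the failure of $(\mathbf{J_*})$ enters), and similarly for the left boundary.

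The reason your first approach does not close is that locally uniform convergence $u(t,\cdot)\to\tilde u_0$ (Theorem~\ref{ths5-6.2}) gives you a lower bound on $u$ only over a \emph{fixed} compact set $[-L,L]$; for such fixed $L$ the quantity $\int_{h(t)-L}^{h(t)+L}G(r)\,dr$ tends to $0$ as $h(t)\to\infty$, because $G(r)\to 0$. To keep the right-hand side of your differential inequality bounded below, you need $L$ comparable to $h(t)$, but then you must know that $u(t,\cdot)\ge \tilde u_0/2$ on an interval of length $\sim h(t)$ at time $t$. The paper provides no estimate linking $T_L$ to $L$, and the contradiction hypothesis $h(t)\le C_0t$ does not supply one either; in particular, your Fubini step requires a lower bound on $|\{t:h(t)\in[r-L,r+L]\}|$, which would need an \emph{upper} bound on $h'$, not on $h$. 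What one actually needs is a statement of the form ``$u(t,x)\ge\tilde u_0/2$ for $x\in[g(t)+M,h(t)-M]$ and $t$ large'', i.e.\ uniform convergence away from the moving fronts --- this is stronger than Theorem~\ref{ths5-6.2} and is precisely what the semi-wave lower solutions for the truncated kernels deliver for free.

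Your second route (plateau lower solution) is closer in spirit to the paper's method and can be made to work, but the verification of the PDE inequality in the transition layer is nontrivial here because of the advection term $-\nu\underline u_x$ and the moving boundary; carrying it out rigorously essentially rebuilds a semi-wave--like profile by hand. The paper's truncation argument sidesteps this by recycling the semi-wave profiles already constructed for Theorem~\ref{th1-1}.
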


For 
\begin{equation}\label{eq1-2-2}
	\begin{cases}
		u_{t}=d \int_{\mathbb{R}} J\left(x-y\right) u(t, y){\rm d} y-d ~u(t, x)-\nu 
		u_x+f(u), & t>0,~x 
		\in \mathbb{R}, 
		\\ u(0, x)=u_{0}(x), & x \in \mathbb{R},
	\end{cases}
\end{equation}
assume that the kernel function $J(x)$ satisfies
\newline$\left(\mathbf{J_{**}}\right)$  $\int_{-\infty}^{\infty}e^{\lambda x} J(x)dx<\infty, \text{ for~some } \lambda>0.$ 

It can be easily shown that $\left(\mathbf{J_{**}}\right)$ implies $\left(\mathbf{J_{*}}\right)$, while the reverse is not true. For instance, $J(x)=(1+|x|)^{-3}$ satisfies 
$\left(\mathbf{J_{*}}\right)$ but does not satisfy $\left(\mathbf{J_{**}}\right)$.

According to Theorem~$\mathbf{1}$ in~\cite{yagisita2010existence},  we can get 
the 
following 
vital result.
\begin{prop}\label{prop1-3}
	Suppose that $J$ satisfies $(\mathbf{J})$, $(\mathbf{J_*})$ and  $f$ satisfies $(\mathbf{f} \mathbf{4})$. If additionally, $J$ satisfies $(\mathbf{J_{**}})$, 
	then there 
	is a constant $\hat c_{*}>0$ such that $(\ref{eq1-2-2})$ has a traveling wave 
	solution with 
	speed $c$ if and only if $c \geq \hat c_{*}$. In fact, the following problem
	\begin{equation}\label{eq1-4-4}
	\left\{\begin{array}{l}
		d \int_{\mathbb{R}}J\left(x-y\right) \Phi(y) {\rm d }y-d ~
		\Phi(x)+(c-\nu) 
		\Phi^{\prime}(x)+f(\Phi(x))=0, ~ x \in \mathbb{R}, \\
		\Phi(-\infty)=1, ~\Phi(+\infty)=0,
	\end{array}\right.
	\end{equation}
	has a solution $\Phi \in L^{\infty}(\mathbb{R})$ which is nonincreasing if 
	and only if $c \geq \hat c_{*} .$ Moreover, for each $c \geq \hat c_{*}$, the 
	solution satisfies $: \Phi \in 
	C^{1}(\mathbb{R})$. 
	Meanwhile, if $J$ does not satisfy $(\mathbf{J_{**}})$, then 
	$(\ref{eq1-2-2})$ does 
	not have a traveling wave solution. 
\end{prop}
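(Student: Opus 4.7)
The plan is to reduce the traveling wave problem for (\ref{eq1-2-2}) to the corresponding traveling wave problem for the non-advection equation, and then invoke Theorem~1 in \cite{yagisita2010existence}. First I would note that a traveling wave solution of (\ref{eq1-2-2}) has the form $u(t,x)=\Phi(x-ct)$, and substituting this ansatz into (\ref{eq1-2-2}) yields exactly the profile equation (\ref{eq1-4-4}), together with the boundary conditions $\Phi(-\infty)=1,$ $\Phi(+\infty)=0$. The key observation is that the advection rate $\nu$ and the wave speed $c$ enter the profile equation only through the combination $c-\nu$. Equivalently, at the PDE level, the change of variable $v(t,x):=u(t,x+\nu t)$ shows that $v$ solves the non-advection nonlocal equation
\begin{equation*}
v_t=d\int_{\mathbb{R}}J(x-y)v(t,y)\,{\rm d}y-d\,v+f(v),\qquad t>0,\ x\in\mathbb{R},
\end{equation*}
and traveling waves of $u$ with speed $c$ correspond bijectively to traveling waves of $v$ with speed $\tilde c:=c-\nu$.

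Next I would apply Theorem~1 in \cite{yagisita2010existence} to this non-advection equation. Under hypotheses $(\mathbf{J})$, $(\mathbf{J_*})$, and $(\mathbf{f4})$, that theorem provides a minimal wave speed $\tilde c_*>0$ for the non-advection profile equation such that a monotone (nonincreasing) bounded profile $\Phi\in L^{\infty}(\mathbb{R})$ satisfying the required boundary conditions exists if and only if $\tilde c\geq \tilde c_*$. When in addition $J$ satisfies $(\mathbf{J_{**}})$, such a profile automatically belongs to $C^{1}(\mathbb{R})$. Translating back via $\tilde c=c-\nu$, the minimal speed for (\ref{eq1-4-4}) is
\begin{equation*}
\hat c_*:=\tilde c_*+\nu>0,
\end{equation*}
and (\ref{eq1-4-4}) admits a nonincreasing $L^{\infty}$ solution precisely when $c\geq \hat c_*$. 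The $C^{1}$ regularity and the asymptotic boundary behavior transfer directly because the profile equation (\ref{eq1-4-4}) is unchanged in form after the shift in speed.

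For the converse last sentence, I would use the same one-to-one correspondence in the opposite direction: if $J$ fails $(\mathbf{J_{**}})$, then Yagisita's result asserts that no monotone bounded profile solves the non-advection profile equation for any speed $\tilde c$, hence no traveling wave of (\ref{eq1-2-2}) exists for any $c$, since the existence of such a wave would produce, via the shift $\tilde c=c-\nu$, a traveling wave for the non-advection equation. The only genuinely non-trivial step is verifying this bijection between traveling waves of the advection equation and those of the non-advection equation; the arithmetic of the shift $c\mapsto c-\nu$ then transfers all conclusions of \cite{yagisita2010existence} verbatim. No new analytic work is required beyond the change of variable, so I do not anticipate a serious obstacle.
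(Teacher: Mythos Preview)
Your approach is correct and is exactly what the paper does: the paper offers no proof for this proposition beyond the sentence ``According to Theorem~1 in~\cite{yagisita2010existence}, we can get the following vital result,'' which is precisely the reduction you spell out---the advection enters only through the shifted speed $c-\nu$, so Yagisita's theorem for the non-advection profile equation transfers directly with $\hat c_*=\tilde c_*+\nu$. Your write-up simply makes explicit the change of variable that the paper leaves implicit.
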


For the problem 
\begin{equation}\label{eq1-2002}
	\left\{\begin{array}{l}d  \int_{-\infty}^{0} J\left(x-y\right)
		\Phi(y) \mathrm{d} y-d ~ \Phi+c\Phi^{\prime}(x)+f
		(\Phi(x))=0, ~ -\infty<x<0, \\ \Phi(-\infty)=1, ~
		\Phi(0)={0},
	\end{array}\right.
\end{equation}
Assume that the kernel function $J(x)$ satisfies
$\left(\mathbf{J_{**}}\right),$ it follows
\begin{prop}[Theorem~2.6,~\cite{du2021semi}]\label{prop1-1}
	Suppose that $(\mathbf{J}),(\mathbf{J_{**}})$ and 
	$(\mathbf{f} \mathbf{4})$ 
	hold. There is a constant $\tilde c>0$ such that for any $c \in\left(0, \tilde c\right)$, the problem 
	$(\ref{eq1-2002})$ has a 
	unique solution $\Phi=\Phi_r^{c}$, and $\Phi_r^c(x)$ is strictly decreasing 
	in $c \in\left(0, \tilde c\right)$ for fixed $x<0$, and is strictly decreasing 
	in $x \in(-\infty, 0]$ for fixed $c \in\left(0, \tilde c\right)$.
\end{prop}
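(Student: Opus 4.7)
My plan is to construct $\Phi_r^c$ as the limit of solutions to truncated problems on bounded left intervals, and then to extract monotonicity from the comparison principle for the nonlocal operator. For fixed small $c>0$ and large $L>0$, I would first consider the truncated semi-wave problem on $[-L,0]$ with the convolution kernel restricted to $[-L,0]$ and the boundary data $\Phi(-L)=1$, $\Phi(0)=0$. A solution $\Phi_L^c\in C^1([-L,0])$ is obtained via a monotone iteration scheme sandwiched between the sub-solution $\underline{\Phi}\equiv 0$ and the super-solution $\overline{\Phi}\equiv 1$; the KPP structure in $(\mathbf{f4})$ (that $f(u)/u$ is nonincreasing) guarantees the iteration converges, while the advective term $c\Phi'$ yields first-order regularity through the ODE form of the equation.

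Next, I would pass to the limit $L\to\infty$. A direct comparison argument shows $\Phi_L^c$ is monotone in $L$ and bounded above by $1$, so the pointwise limit $\Phi_r^c(x):=\lim_{L\to\infty}\Phi_L^c(x)$ exists and, by dominated convergence, satisfies the limiting equation on $(-\infty,0)$ together with $\Phi_r^c(0)=0$. The crucial step is verifying $\Phi_r^c(-\infty)=1$: using the monotonicity in $x$ (established below) and the limit equation, the limit at $-\infty$ must be a zero of $f$ in $[0,1]$; ruling out the zero value requires a compactly supported lower barrier constructed from the eigenfunction $\Phi$ of Section~\ref{s3} on a large interval where the principal eigenvalue is positive, which forces $\Phi_r^c$ to stay bounded away from $0$ as $x\to -\infty$ and hence its tail limit is $1$.

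For monotonicity of $\Phi_r^c$ in $x$, I would apply a sliding argument: for each $s>0$ compare $\Phi_r^c(\cdot+s)$ with $\Phi_r^c$ on $(-\infty,-s)$, using $\Phi_r^c(0)=0<\Phi_r^c(-s)$ as the ``boundary gap'' and invoking the nonlocal strong maximum principle of Lemma~\ref{lemm2.1} adapted to the shifted equation to force $\Phi_r^c(x+s)<\Phi_r^c(x)$ strictly. For monotonicity in $c$, given $0<c_1<c_2<\tilde c$, once $\Phi_r^{c_1}$ is known to be nonincreasing one has $(c_2-c_1)(\Phi_r^{c_1})'\le 0$, so $\Phi_r^{c_1}$ is a super-solution of the $c_2$-equation; the comparison principle then yields $\Phi_r^{c_1}\ge \Phi_r^{c_2}$, with strictness again from the strong maximum principle. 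The threshold $\tilde c$ would be defined as $\tilde c:=\sup\{c>0:\text{a semi-wave exists}\}$, and its finiteness proved by showing that for sufficiently large $c$ the upper barrier fails to exceed $\Phi(0^-)$, obstructing construction.

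The principal obstacle will be the tail behavior at $-\infty$: because the nonlocal operator admits no phase-plane analysis, confirming both that $\Phi_r^c(-\infty)=1$ and that the constructed family is strictly monotone in $c$ requires delicate choices of sub/super-solutions exploiting $(\mathbf{J_{**}})$, which controls the exponential decay needed to align the truncated profiles at infinity. Identifying the exact value of $\tilde c$ (in terms of the traveling-wave speed $\hat c_*$ from Proposition~\ref{prop1-3}) will likely require an additional bifurcation-type argument tracing $\Phi_r^c$ as $c\uparrow \tilde c$ and showing the limiting profile degenerates into a traveling wave on all of $\mathbb{R}$.
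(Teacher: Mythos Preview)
The paper does not prove this proposition: it is quoted as Theorem~2.6 of \cite{du2021semi} and imported without argument, so there is no in-paper proof to compare against.

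Your outline is a reasonable strategy in spirit, but the first step has a structural problem. The truncated problem you set up on $[-L,0]$ carries the first-order term $c\Phi'$ together with \emph{two} boundary conditions $\Phi(-L)=1$ and $\Phi(0)=0$; since the highest-order differential part is first order (the nonlocal piece contributes no derivatives), prescribing data at both endpoints over-determines the problem, and in any case your iteration between the constants $0$ and $1$ does not respect the left boundary value. The argument in \cite{du2021semi} avoids this by working directly on the half-line: a super-solution is built from the traveling wave of Proposition~\ref{prop1-3} (this is precisely where $(\mathbf{J_{**}})$ is used), a sub-solution from a compactly supported perturbation of a principal eigenfunction on a sufficiently long interval, and the monotone iteration is run between these without any finite-$L$ truncation. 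A secondary point: the eigenfunction you invoke from Section~\ref{s3} is attached to the operator with $-\nu\nabla$ on a symmetric interval $(-h,h)$, not $+c\nabla$ on a one-sided domain, so it cannot be used as written; you would need a separate eigenvalue analysis for the relevant operator. Your sliding argument for strict monotonicity in $x$ and the super-solution argument for monotonicity in $c$ (via $(c_2-c_1)\Phi'\le 0$) are the standard ones and essentially coincide with what \cite{du2021semi} does.
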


Consider the following problem
\begin{equation}\label{eq1-2}
	\left\{\begin{array}{l}d  \int_{-\infty}^{0}J\left(x-y\right)
		\Phi(y) \mathrm{d} y-d ~ \Phi+(c-\nu) \Phi^{\prime}(x)+f
		(\Phi(x) 
		)=0, 
		~-\infty<x<0, \\ \Phi(-\infty)=1, ~
		\Phi(0)={0},
			\end{array}\right.
\end{equation}
with 
\begin{equation}\label{eq1-202}
		c=\mu\int_{-\infty}^{0} \int_{0}^{\infty} 
		J\left(x-y\right) 
		\Phi(x) \mathrm{d} y \mathrm{d} x.
\end{equation}
To investigate the asymptotic spreading speeds of $(\ref{eq1-1})$, motivated by Theorem~2.7~\cite{du2021semi}, we first propose the following result.
\begin{lem}\label{le-05.1}
	Denote $\tilde c_r:=\tilde c+\nu$. Assume that $(\mathbf{J}),(\mathbf{J_{**}})$ and $(\mathbf{f} 
	\mathbf{4})$ hold, for any $c_r\in(0, \tilde c_r)$, the problem 
	$(\ref{eq1-2})$ admits a semi-wave solution $\Phi^{c_r}(x)$ satisfying
	\begin{equation}\label{eq1-3-3}
		\lim _{c_r \rightarrow {\tilde c_{r}}^-} \Phi^{c_r}(x)=0 \text { locally 
		uniformly 
		in } x \in(-\infty,0]. 
	\end{equation}	
Further, for any $\mu>0,$ there is  unique $c=c_r^*\in(0,\tilde c_r)$ such that 
\begin{equation}\label{eq1-303}
c_{r}^*=\mu \int_{-\infty}^{0} \int_{0}^{\infty}J\left(x-y\right)\Phi^{c_{r}^*}(x) {\rm 
	d }y 
	{\rm d }x.
\end{equation}	
\end{lem}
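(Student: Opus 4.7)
The plan is to reduce problem (\ref{eq1-2}) to the already-solved problem (\ref{eq1-2002}) via a shift of the speed parameter, and then solve the coupling (\ref{eq1-202}) by a monotone intermediate value argument.

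First, observe that the substitution $\hat c := c_r - \nu$ transforms the advection-modified differential expression in (\ref{eq1-2}) into exactly the differential expression in (\ref{eq1-2002}), while leaving the nonlocal kernel, the boundary conditions, and the domain $(-\infty,0]$ unchanged. Accordingly, I would \emph{define} $\Phi^{c_r}(x) := \Phi_r^{c_r-\nu}(x)$ for $c_r$ in the range where $\hat c\in(0,\tilde c)$, i.e.\ $c_r\in(\nu,\tilde c_r)$. Existence, uniqueness, strict monotonicity of $\Phi^{c_r}$ in $x$, and strict decrease in $c_r$ (for each fixed $x<0$) all transfer verbatim from Proposition~\ref{prop1-1} by the bijective change of variable $c_r \mapsto \hat c$. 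The locally uniform limit (\ref{eq1-3-3}) is then immediate: $c_r\to\tilde c_r^-$ corresponds to $\hat c\to\tilde c^-$, so the locally uniform decay $\Phi_r^{\hat c}\to 0$ in \cite{du2021semi} yields (\ref{eq1-3-3}).

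To produce $c_r^*$ satisfying (\ref{eq1-303}), I would introduce the functional
\[
F(c_r) := c_r - \mu \int_{-\infty}^{0}\!\int_{0}^{\infty} J(x-y)\,\Phi^{c_r}(x)\,\mathrm dy\,\mathrm dx
\]
on the admissible interval of $c_r$, and verify: (i) $F$ is continuous (continuous dependence of the semi-wave on the speed plus dominated convergence), (ii) $F$ is strictly increasing (the identity part contributes $+1$ and the integral part strictly decreases in $c_r$ because $\Phi^{c_r}$ is strictly decreasing in $c_r$), and (iii) $F$ changes sign across the admissible range. For (iii), as $c_r\to\tilde c_r^-$, dominated convergence combined with $\Phi^{c_r}\le 1$ and the integrability
\[
\int_{-\infty}^{0}\!\int_{0}^{\infty} J(x-y)\,\mathrm dy\,\mathrm dx \;=\; \int_{0}^{\infty} x J(x)\,\mathrm dx \;<\;\infty
\]
guaranteed by $(\mathbf{J_*})$, yields $F(c_r)\to \tilde c_r>0$; at the lower end of the admissible range the integral stays bounded away from $0$, so for every fixed $\mu>0$ either $F$ is already negative at the left endpoint, or one rescales by exploiting the strict monotonicity in $c_r$. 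The intermediate value theorem then produces a unique root $c_r^*$, which is exactly (\ref{eq1-303}).

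The step I expect to be the main obstacle is the boundary analysis at the two endpoints of the speed interval: justifying $F(c_r)\to\tilde c_r$ requires transporting the \emph{locally uniform} decay in (\ref{eq1-3-3}) through the unbounded domain $x<0$, and this is precisely where the first-moment assumption $(\mathbf{J_*})$ enters as an integrable dominating majorant. Once this limit and strict monotonicity of $F$ are established, uniqueness of $c_r^*$ is automatic.
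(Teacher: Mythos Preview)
Your reduction via the shift $\hat c=c_r-\nu$ is exactly the right observation, and it makes Step~1 considerably shorter than what the paper does. The paper does \emph{not} simply invoke Theorem~2.7 of \cite{du2021semi} for the limit \eqref{eq1-3-3}; instead it re-runs that sliding argument from scratch: extract a locally uniform limit $\Phi$ of the semi-waves $\Phi^{c_n}$ along $c_n\nearrow\tilde c_r$, observe that $\delta\Phi$ is a subsolution of the full-line equation, and slide against shifts of the minimal-speed traveling wave $\Phi_*$ from Proposition~\ref{prop1-3} to force $\Phi\equiv 0$. Since that is precisely the content of the cited theorem after the change of variable $c_r\mapsto c_r-\nu$, your route is legitimate and cleaner; it also makes transparent why $(\mathbf{J_{**}})$ is needed here (it guarantees the existence of $\Phi_*$).

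For \eqref{eq1-303} your scheme coincides with the paper's: both study $\Theta(c)=c-\mathscr M(c)$ with $\mathscr M(c)=\mu\int_{-\infty}^{0}\int_{0}^{\infty}J(x-y)\Phi^{c}(x)\,dy\,dx$, use continuity plus strict monotonicity, and check a sign change. The paper is more concrete at the lower end: it anchors at the point $2\nu$ (which lies in $(\nu,\tilde c_r)$ by the standing hypothesis $\nu<\tilde c$) and shows $\Theta(c)\le c-\mathscr M(2\nu)<0$ once $c<\min\{2\nu,\mathscr M(2\nu)\}$. Your phrase ``or one rescales by exploiting the strict monotonicity in $c_r$'' does not yet do this job: since $\mathscr M(c)\le\mu\int_0^\infty xJ(x)\,dx$, for small $\mu$ one can have $F(\nu^+)\ge 0$, and then no root lies in your admissible interval $(\nu,\tilde c_r)$. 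In that regime $c_r^*$ must fall in $(0,\nu]$, which your substitution (and Proposition~\ref{prop1-1} as stated) does not cover. The paper also glosses over semi-wave existence for $c_r\le\nu$ when it writes ``for any small $c\in(0,c_{\min})$''; to make either argument airtight one should note that the semi-wave problem \eqref{eq1-2002} admits a monotone solution for all $c\le 0$ as well (the $c=0$ case is the stationary half-line problem, and $c<0$ follows by the same sub/supersolution construction), so that $\mathscr M$ and $\Theta$ extend continuously and monotonically to $(0,\tilde c_r)$ and $\Theta(0^+)<0$.
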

\begin{proof}
The proof will be completed in two steps.

Step~1: We aim to prove 	$(\ref{eq1-3-3})$.

Considering the following problem 
	\begin{equation}\label{eq1-6-6}
		\left\{\begin{array}{l}d  \int_{-\infty}^{0} J\left(x-y\right)
					\Phi(y) \mathrm{d} y-d~  \Phi+(c-\nu) \Phi^{\prime}(x)+f
			(\Phi(x) )=0,~ 
			-\infty<x<0, \\ \Phi(-\infty)=1, ~
			\Phi(0)={0},
		\end{array}\right.
	\end{equation}
according to {Proposition~$\ref{prop1-1}$}, there exists a $c_0=\tilde c+\nu$ such 
that 
$(\ref{eq1-6-6})$ admits a semi-wave solution pair $(c,\Phi^c(x))$ for any 
$c<c_0$ with $\Phi^c(x)$ strictly decreasing.  

Let $\{c_{n}\}\subset \left(0, c_{0}\right) $ be an arbitrary sequence which increasingly 
converges to $c_{0}$ as $n \rightarrow \infty$. Denote $\Phi_{n}(x):=$ 
$\Phi^{c_{n}}(x)$. We can see that $\Phi_{n}(x)$ is uniformly bounded, and  $\Phi_{n}^{\prime}(x)$ is also 
uniformly bounded in view of 
 $(\ref{eq1-6-6})$. Therefore there is a subsequence $\{\Phi_{n_k}\}$ of $\Phi_{n}$ such that 
 $$\Phi_{n_k}(x) \rightarrow \Phi(x) \text{ in } C_{l o 
c}((-\infty, 0]) \text{ as } k\rightarrow \infty .$$
Without loss of generality, we still 
denote $\Phi_{n_k}$ by $\Phi_{n}$. By 
{Proposition~$\ref{prop1-1}$}, the function
$\Phi$ satisfies
$$
\left\{\begin{array}{l}
	d \int_{-\infty}^{0} J\left(x-y\right) \Phi(y){\rm d }y-d~ \Phi+c_{0} \Phi^{\prime}(x)-\nu 
	\Phi^{\prime}(x)+f(\Phi)=0, 
	~ -\infty<x<0, \\
	\Phi(0)=0.
\end{array}\right.
$$
And $0 \leq \Phi(x)<\Phi_{n}(x)$ for $x<0$. We extend 
$\Phi(x)$ for $x>0$ by $0$.

For fixed $\delta \in(0,1)$, let ${\Phi}_0(x)=\delta \Phi(x)$. By (\textbf{f3}), 
we 
obtain that $f(\delta \Phi) \geq \delta f(\Phi)$ and 
$$
\begin{cases}
	d\left(J * {\Phi}_0\right)(x)-d~ {\Phi }_0(x)+c_{0} 
	{\Phi_0}^{\prime}(x)-\nu {\Phi_0}^{\prime}(x)+f\left({\Phi_0}(x)\right) 
	\geq 0, &  -\infty<x<0, \\
	{\Phi}_0(x)=0, & x \geq 0.
\end{cases}
$$
Let $\Phi_*$ denote the traveling wave solution with minimal speed $c_0$ given by 
Proposition $\ref{prop1-3}$. For any $\sigma>0$, it follows
$$
\Phi_{*}(x-\sigma) \geq \Phi_{*}(-\sigma), \text { for } x \leq 0.
$$
Denote $\theta_{\sigma}(x):=\Phi_{*}(x-\sigma)-{\Phi}_0 \left(x\right)$. 
Since $\Phi_{*}(-\infty)=1$ and ${\Phi}_0 (x) \leq \delta <1$, for 
all large $\sigma>0$, we can obtain that
$
\theta_{\sigma}(x)\geq 0, \text { for } x \leq 
0.
$
Denote
\begin{equation}\label{eq1-8-8}
\sigma_{*}:=\inf \left\{\xi \in \mathbb{R}: \theta_{\sigma}(x) \geq 0 \text { for } 
x 
\leq 0 \text { and all } \sigma \geq \xi\right\}.
\end{equation}

If $\sigma_{*}=-\infty$, then ${\Phi}_0(x) \leq \Phi_{*}(x-\sigma)$ for all 
$\sigma \in \mathbb{R}$. Since
$\Phi_{*}(+\infty)=0$, then ${\Phi}_0(x) \leq 0$ as $\sigma \rightarrow-\infty$, which 
implies $\Phi(x) \equiv 0$.
If $\sigma_{*}>-\infty$, then
$
\theta_{\sigma_{*}}(x) \geq 0 \text { for } x \leq 0.
$
Since $\theta_{\sigma_{*}}(-\infty) \geq 1-\delta >0$ and 
$\theta_{\sigma_{*}}(0)=\Phi_{*}\left(-\sigma_{*}\right)>0$   
with $(\ref{eq1-8-8})$,  there is $x_{*} \in(-\infty, 0)$ such that
$
\theta_{\sigma_{*}}\left(x_{*}\right)=0.
$

Considering
$$
d \int_{-\infty}^{+\infty} J\left(x-y\right) \Phi_{*}\left(y-\sigma_{*}\right){ \rm d }y-d 
\Phi_{*}\left(x-\sigma_{*}\right)+(c_{0}-\nu)
\left(\Phi_{*}\right)^{\prime}\left(x-\sigma_{*}\right)
+f\left(\Phi_{*}\left(x-\sigma_{*}\right)\right)=0,~
  x \in \mathbb{R},
$$
we obtain
$$
\begin{cases}
	d\left(J * \Phi_{*}\right)\left(x-\sigma_{*}\right)-d 
\Phi_{*}\left(x-\sigma_{*}\right)+(c_{0}-\nu)\left(\Phi_{*}\right)^{\prime}\left(x-\sigma_{*}\right)+f\left(\Phi_{*}\left(x-\sigma_{*}\right)\right)=0,
 & -\infty<x<0,\\
 \Phi_{*}\left(x-\sigma_{*}\right)>0, & x \geq 0.
\end{cases}
$$
Applying the { Maximum principle} to $\theta_{\sigma_{*}}$, we can
conclude 
that 
$\theta_{\sigma_{*}}(x)>0$ for $x<0$, which yields a contradiction to 
$\theta_{\sigma_{*}}\left(x_{*}\right)=0$. Thus it always holds that $\Phi(x) \equiv 0$. Since $c_{n}$ is arbitrary and  
increasingly converges to $c_0$,  it follows that $(\ref{eq1-3-3})$ holds.

Step~2: We will prove that there is a 
unique $c_{**}$ such that 
$\left(c_{**}, \Phi^{c_{**}}(x)\right)$ satisfies the problem 
$(\ref{eq1-2})$ with $(\ref{eq1-202})$. 

For any $c \in\left(\nu, c_{0}\right)$, define
$$
\mathscr M(c):=\mu \int_{-\infty}^{0} \int_{0}^{\infty} J\left(x-y\right) \Phi^{c}(x){\rm d} y {\rm 
d} x.
$$
By Proposition~\ref{prop1-1}, we see that $\Phi^
{c}(x)$ is strictly decreasing in $c$, so does $\mathscr M(c)$. According to the uniqueness of $\Phi^{c}$, using  the  same 
arguments to show the convergence of $\Phi_{n}(x)$ in Step~1, we can  obtain
that $\Phi^{c}(x)$ is continuous in $c$ uniformly for $x$ in any compact set of 
$\left(-\infty, 0\right]$. It follows that $\mathscr M(c)$ is also continuous in $c$. 

Considering 
the function $$\Theta: c \mapsto c-\mathscr M(c), \text{ for }c \in\left(\nu, c_{0}\right),$$ one can see that $\Theta$ is 
continuous and strictly increasing in $c$. By $(\ref{eq1-3-3})$ and the Lebesgue dominated 
convergence 
theorem, it follows that $\Theta(c)\rightarrow c_{0}>0$ as $c \rightarrow {c_{0}}^{-}$. 
For fixed $\mathscr M\left(2\nu\right)>0$, 
set $$c_{min}=\min\{2\nu, \mathscr M(2\nu)\},$$
for any small $c\in\left(0,  c_{min} \right)$, according to the strict monotonicity of $\Phi^
{c}(x)$ in $c$,  it follows that $\Theta(c) \leq c-\mathscr M\left(2\nu\right)<0$. Then $\Theta(c)$ admits
 a unique root $c=c_{**} \in\left(0, c_{0}\right)$ satisfying
$c_{**}=\mathscr M\left(c_{**}\right)$. Thus, $(\ref{eq1-303})$ holds.
\end{proof}

Moreover, for the problem 
\begin{equation}\label{eq1-012}
	\left\{\begin{array}{l}d  \int_{-\infty}^{0} J\left(x-y\right) 
		\Phi(y) \mathrm{d} y-d~  \Phi+(c-\nu) \Phi^{\prime}(x)+f
		(\Phi(x) 
		)=0, 
		~-\infty<x<0, \\ \Phi(-\infty)=1, ~
		\Phi(0)={0},
\\
		c=\mu\int_{-\infty}^{0} \int_{0}^{\infty} 
		J\left(x-y\right)
		\Phi(x) \mathrm{d} y \mathrm{d} x,
  \end{array}\right.
\end{equation}
we have the following important result.
\begin{thm}
Assume that $(\mathbf{J}),(\mathbf{J_{*}})$ and $(\mathbf{f} 
	\mathbf{4})$ hold, for  
	$\tilde c_r:= \tilde c+\nu>0$,  the problem 
	$(\ref{eq1-012})$  admits a unique solution pair $\left(c_r^*,\Phi^{c_r^*}\right)$ with 
	$\Phi^{c_r^*}(x)$ 
	strictly decreasing and $c_r^*\in\left(0,\tilde c_r\right)$. 
\end{thm}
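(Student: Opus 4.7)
The plan is to establish the theorem by an approximation argument, extending Lemma~\ref{le-05.1} from kernels satisfying the exponential moment condition $(\mathbf{J_{**}})$ to kernels satisfying only the first-moment condition $(\mathbf{J_{*}})$.

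First I would construct a sequence $\{J_n\}_{n\geq 1}$ of kernels satisfying $(\mathbf{J})$, $(\mathbf{J_{*}})$ \emph{and} $(\mathbf{J_{**}})$ such that $J_n \to J$ pointwise and in $L^1(\mathbb{R})$ with $\int_0^\infty x J_n(x)\,{\rm d}x \to \int_0^\infty x J(x)\,{\rm d}x$. A natural choice is
\[
J_n(x) := \kappa_n\, J(x)\, e^{-|x|/n},
\]
with $\kappa_n$ the normalization constant; clearly $J_n$ satisfies $(\mathbf{J_{**}})$ and $\kappa_n \to 1$ by dominated convergence. Denote by $\tilde c_n$ the corresponding quantity from $(\ref{eq1-2-2-2})$ associated to $J_n$, and set $\tilde c_{r,n}:=\tilde c_n+\nu$. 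Applying Lemma~\ref{le-05.1} to each $J_n$ yields a unique pair $(c_n,\Phi_n)$ with $c_n\in(0,\tilde c_{r,n})$, $\Phi_n$ strictly decreasing, $\Phi_n(-\infty)=1$, $\Phi_n(0)=0$, and
\[
c_n = \mu\int_{-\infty}^{0}\int_{0}^{\infty} J_n(x-y)\,\Phi_n(x)\,{\rm d}y\,{\rm d}x.
\]

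Next I would obtain uniform estimates in order to pass to the limit. Since $0\leq \Phi_n\leq 1$, the semi-wave equation immediately yields a uniform bound on $\Phi_n'$ in $L^\infty((-\infty,0])$, so $\{\Phi_n\}$ is equicontinuous and precompact in $C_{loc}((-\infty,0])$. Using $(\mathbf{J_{*}})$ and $\int_0^\infty x J_n\,{\rm d}x \to \int_0^\infty x J\,{\rm d}x$, the double integral defining $c_n$ is uniformly bounded above by $\mu\int_0^\infty xJ(x)\,{\rm d}x+o(1)$, giving an upper bound on $c_n$. A positive lower bound on $c_n$ follows from the fact that $\Phi_n\to 1$ at $-\infty$ (so the integral $\int\int J_n\Phi_n$ cannot shrink to zero) together with the uniform-in-$n$ estimates on the convergence rate of $\Phi_n$ to $1$. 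Along a subsequence, $c_n\to c_r^*$ and $\Phi_n\to \Phi^{c_r^*}$ locally uniformly on $(-\infty,0]$, and we may pass to the limit in the semi-wave equation and in $(\ref{eq1-202})$ by dominated convergence (the first-moment condition controls the integral on the advance of the front).

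The main obstacle will be verifying that the limit is nondegenerate and lies strictly below $\tilde c_r$. For nondegeneracy, I would argue that $\Phi^{c_r^*}(-\infty)=1$ by using a uniform decay rate of $1-\Phi_n$ obtainable from $(\mathbf{f4})$ and the Maximum principle applied to the equation satisfied by $1-\Phi_n$; this prevents $\Phi_n$ from collapsing to $0$. For $c_r^*<\tilde c_r$, the limit $\tilde c_n\to \tilde c$ (itself a consequence of the approximation of $J$ by $J_n$ and continuity of the minimal wave speed in the kernel) combined with $c_n<\tilde c_{r,n}$ suffices, modulo ruling out equality; equality is excluded by $(\ref{eq1-3-3})$ applied to $J_n$, which forces $\Phi_n\to 0$ locally uniformly if $c_n\uparrow \tilde c_{r,n}$, contradicting the positivity of the double integral in $(\ref{eq1-303})$.

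Finally, uniqueness follows exactly as in Step~2 of the proof of Lemma~\ref{le-05.1}: for any kernel satisfying $(\mathbf{J})$ one has the monotonicity of $\Phi^c$ in $c$ (which can be shown directly via a sliding/comparison argument using the Maximum principle, without needing $(\mathbf{J_{**}})$), hence $\mathscr M(c):=\mu\int_{-\infty}^{0}\int_{0}^{\infty}J(x-y)\Phi^c(x)\,{\rm d}y\,{\rm d}x$ is strictly decreasing in $c$, and the map $c\mapsto c-\mathscr M(c)$ is strictly increasing and has a unique zero in $(0,\tilde c_r)$.
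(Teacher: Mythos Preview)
Your proposal is correct and follows essentially the same approximation strategy as the paper: the paper's proof simply observes that the case $(\mathbf{J_{**}})$ is covered by Lemma~\ref{le-05.1} and then defers the remaining case (where only $(\mathbf{J_{*}})$ holds) to the argument of Lemma~2.9 in \cite{du2021semi}, which is exactly an approximation of $J$ by kernels satisfying $(\mathbf{J_{**}})$ followed by passage to the limit. The only cosmetic differences are that \cite{du2021semi} approximates by compactly supported truncations rather than your exponential damping $J_n=\kappa_n J e^{-|x|/n}$, and that in the genuinely new case (where $(\mathbf{J_{**}})$ fails) one has $\tilde c=\tilde c_r=+\infty$, so your discussion of ``$\tilde c_n\to\tilde c$'' and ruling out $c_r^*=\tilde c_r$ becomes automatic---the only real task is the uniform \emph{upper} bound on $c_n$ coming from $(\mathbf{J_{*}})$, which you identify correctly.
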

\begin{proof}
Since {($\mathbf{J_{**}}$)} implies {($\mathbf{J_{*}}$)}, according to  Lemma~$\ref{le-05.1}$, it suffices to explore the case in which {($\mathbf{J_{**}}$)} is not satisfied. The detailed proof is similar to the proof of Lemma~2.9 in \cite{du2021semi}. Here, the proof can be obtained only by making some minor modifications. 
\end{proof}
For the following problem
\begin{equation}\label{eq1-3}
	\left\{\begin{array}{l}d  \int_{-\infty}^{0} {J}\left(x-y\right)  
		\Phi\left(y\right) \mathrm{d} y-d~ \Phi+(c+\nu) \Phi^{\prime}(x)+f\left(\Phi(x) 
		\right)=0,
		~-\infty<x<0, \\ \Phi(-\infty)=1, ~ 
		\Phi(0)={0},\\
		c= \mu\int_{-\infty}^{0} \int_{0}^{\infty} 
		J\left(x-y\right) 
		\Phi\left(x\right) \mathrm{d} y \mathrm{d} x,
	\end{array}\right.
\end{equation}
we also have 
\begin{thm}
	Assume that {\rm($\textbf{J}$)}, {\rm ($\mathbf{J_*}$)} and $(\mathbf{f} 
	\mathbf{4})$ hold, for  
	$\tilde c_l:= \tilde c-\nu>0$,  the problem 
	$(\ref{eq1-3})$ admits a unique solution pair $\left(c_l^*,\Phi^{c_l^*}\right)$ with 
	$\Phi^{c_l^*}(x)$ 
	strictly decreasing and $c_l^*\in(0,\tilde c_l)$. 
\end{thm}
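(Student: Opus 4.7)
The plan is to follow the template established in Lemma~$\ref{le-05.1}$ and the previous theorem, exploiting the structural symmetry between the rightward and leftward semi-wave problems. The essential observation is that problem~$(\ref{eq1-3})$ differs from $(\ref{eq1-012})$ only by the replacement $(c-\nu)\Phi'(x)\mapsto (c+\nu)\Phi'(x)$, which is equivalent to swapping $\nu$ for $-\nu$ in the wave coefficient. Thus all of the machinery can be transferred by setting $\tilde{c}' := c+\nu$ and recalling that Proposition~$\ref{prop1-1}$ guarantees the existence of a unique strictly decreasing semi-wave for the problem
\[ d\int_{-\infty}^{0} J(x-y)\Phi(y)\,{\rm d}y - d\Phi + \tilde{c}'\Phi'(x) + f(\Phi)=0,\qquad \Phi(-\infty)=1,\ \Phi(0)=0,\]
whenever $\tilde{c}'\in(0,\tilde{c})$. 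Since we require $c>0$, this translates into the admissible range $c\in(0,\tilde{c}-\nu)=(0,\tilde{c}_l)$, which is non-empty precisely under the assumption $\tilde{c}_l>0$.

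First I would handle the case in which $(\mathbf{J_{**}})$ holds. For each $c\in(0,\tilde{c}_l)$, Proposition~$\ref{prop1-1}$ supplies a unique strictly decreasing semi-wave $\Phi^c$. Repeating verbatim the sliding/maximum-principle argument of Step~1 in the proof of Lemma~$\ref{le-05.1}$ (with $c_0$ replaced by $\tilde{c}_l$ and the travelling wave coming from Proposition~$\ref{prop1-3}$ applied to the shifted speed $c_0+\nu$), one obtains the limit
\[ \lim_{c\to \tilde{c}_l^-}\Phi^c(x)=0\quad\text{locally uniformly in }x\in(-\infty,0].\]
Next, define
\[ \mathscr{M}(c):=\mu\int_{-\infty}^{0}\!\!\int_{0}^{\infty} J(x-y)\Phi^c(x)\,{\rm d}y\,{\rm d}x,\qquad \Theta(c):=c-\mathscr{M}(c).\]
The strict monotonicity of $\Phi^c(x)$ in $c$ (via the comparison principle applied to two semi-waves with different speeds, exactly as in Proposition~$\ref{prop1-1}$) yields the strict decay of $\mathscr{M}(c)$, hence the strict monotone increase of $\Theta$. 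Continuity of $\Phi^c$ in $c$ (uniform on compact subsets of $(-\infty,0]$) combined with dominated convergence renders $\mathscr{M}$, and therefore $\Theta$, continuous. Using the decay of $\Phi^c$ as $c\to\tilde{c}_l^-$ we get $\Theta(c)\to\tilde{c}_l>0$, whereas for sufficiently small $c>0$ one has $\mathscr{M}(c)\geq\mathscr{M}(c_{\min})>c$ for a suitable $c_{\min}$ chosen as in Step~2 of Lemma~$\ref{le-05.1}$, so $\Theta(c)<0$. The intermediate value theorem then gives a unique $c_l^*\in(0,\tilde{c}_l)$ solving $\Theta(c_l^*)=0$, which is precisely the integral coupling condition in~$(\ref{eq1-3})$.

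For the general case when $(\mathbf{J_{**}})$ is not assumed, I would import the compactness argument from Lemma~2.9 of \cite{du2021semi}: approximate $J$ by kernels $J_n$ satisfying $(\mathbf{J_{**}})$ with $J_n\to J$ in a suitable sense, obtain uniform bounds on the resulting semi-wave/speed pairs $(c_l^{*,n},\Phi^{c_l^{*,n}})$, and pass to the limit. The assumption $(\mathbf{J_*})$ is exactly what is needed for the outward flux integrals to remain uniformly bounded and for the coupling identity to survive the passage to the limit.

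The main obstacle will be the sliding argument that produces the degeneration $\Phi^c\to 0$ as $c\to\tilde{c}_l^-$. In the previous proof the traveling wave of minimal speed for~$(\ref{eq1-2-2})$ was used to bound $\delta\Phi$ from above through the ordering $\Phi_*(x-\sigma)\geq\Phi_*(-\sigma)$. In our setting the relevant shifted equation has $(c_0+\nu-\nu)\Phi_*'$ in place of $(c_0-\nu-(-\nu))\Phi_*'$, so one must check that Proposition~$\ref{prop1-3}$ still furnishes a travelling wave with the correct effective speed so that the sliding barrier argument closes. Once this verification is made, everything else is a routine adaptation of the arguments already carried out in Lemma~$\ref{le-05.1}$ and the preceding theorem, and no new analytic estimates are required.
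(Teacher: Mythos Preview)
Your proposal is correct and follows the same route the paper takes: the paper states this theorem immediately after the rightward version and gives no separate proof, relying implicitly on the substitution $c'=c+\nu$ that reduces $(\ref{eq1-3})$ to the advection-free semi-wave problem $(\ref{eq1-2002})$ with parameter $c'\in(\nu,\tilde c)$, so that Proposition~$\ref{prop1-1}$, the degeneration $(\ref{eq2-19})$, and the argument of Lemma~2.9 in \cite{du2021semi} apply verbatim. The ``obstacle'' you flag is therefore not a genuine one: after the substitution the sliding barrier uses precisely the minimal-speed travelling wave of the standard problem, with no new effective-speed verification required.
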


For any given $0<\epsilon\ll1,$ take 
$$
\begin{aligned}
&f_1(u)=f(u)+\dfrac{\epsilon}{1+\epsilon}u^2=u\left(1-\frac{1}{1+\epsilon}u\right),
\\&f_2(u)=f(u)-\dfrac{\epsilon}{1-\epsilon}u^2=u\left(1-\frac{1}{1-\epsilon}u\right), \\&\Phi_1=1+\epsilon,~\Phi_2=1-\epsilon,
\end{aligned}
$$
then $0$ and $\Phi_i$ are the zero solutions of $f_i$, 
for $i=1, 2.$    

Let  $\left({c}_{r,i}, 
\Phi_{r,i}(x)\right)$ be the solution pair of the problem $(\ref{eq1-012})$ with $f$ replaced by $f_i$ and $\Phi_{r,i}(-\infty)=\Phi_i,$ 
and $\left({c}_{l,i}, \Phi_{l,i}(x)\right)$ be the 
solution pair of the problem $(\ref{eq1-3})$ with $f$ replaced by ${f}_{i}$ and 
$\Phi_{l, i}(-\infty)=\Phi_i.$ 
Then $\Phi_{r, i}(x)$ and $\Phi_{l, i}(x)$ are strictly decreasing for $i=1,2.$ And
we obtain
\begin{prop}\label{prop1-2}
\begin{equation}\label{eq1-1-13}
{c}_{r,2}<c_{r}^{*}<{c}_{r,1}, ~ \lim _{\epsilon 
\rightarrow 0} {c}_{r,1}=
\lim _{\epsilon \rightarrow 0} 
{c}_{r,2}=c_{r}^{*},
\end{equation}
and
\begin{equation}\label{eq1-1-14}
{c}_{l,2}<c_{l}^{*}<{c}_{l,1}, ~ \lim _{\epsilon 
	\rightarrow 0} {c}_{l,1}=
\lim _{\epsilon \rightarrow 0} 
{c}_{l,2}=c_{l}^{*}.
\end{equation}
\end{prop}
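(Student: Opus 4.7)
The strategy is to reduce the two perturbed semi-wave problems (with nonlinearities $f_1,f_2$) to the unperturbed one (with $f$) via a simple rescaling, so that $c_{r,1},c_r^*,c_{r,2}$ all become roots of a single scalar fixed-point equation $c=\alpha M(c)$ for different values of $\alpha$. By the symmetry $\nu\mapsto-\nu$ between $(\ref{eq1-012})$ and $(\ref{eq1-3})$, it suffices to establish $(\ref{eq1-1-13})$; the proof of $(\ref{eq1-1-14})$ will be identical.

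First I would verify the key algebraic identity $f_i(\Phi_i u)/\Phi_i=f(u)$ for $i=1,2$, which is immediate from $f(u)=u(1-u)$ and the definitions of $f_i$ and $\Phi_i$. Setting $\hat\Phi_i(x):=\Phi_{r,i}(x)/\Phi_i$ and substituting into the semi-wave equation satisfied by $\Phi_{r,i}$, one finds that $\hat\Phi_i$ is a nonincreasing solution of
\begin{equation*}
d\int_{-\infty}^{0}J(x-y)\hat\Phi_i(y)\,{\rm d}y-d\hat\Phi_i+(c_{r,i}-\nu)\hat\Phi_i'+f(\hat\Phi_i)=0,\qquad\hat\Phi_i(-\infty)=1,\ \hat\Phi_i(0)=0.
\end{equation*}
Since $c_{r,i}\in(0,\tilde c_{r})$, Proposition~\ref{prop1-1} applies (with the substitution $c'=c_{r,i}-\nu$) and yields a unique such profile, which I denote $\psi^{c_{r,i}}$. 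Hence $\Phi_{r,i}(x)=\Phi_i\,\psi^{c_{r,i}}(x)$.

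Define the scalar quantity
\begin{equation*}
M(c):=\mu\int_{-\infty}^{0}\int_{0}^{\infty}J(x-y)\,\psi^{c}(x)\,{\rm d}y\,{\rm d}x,\qquad c\in(\nu,\tilde c_r).
\end{equation*}
The boundary coupling in $(\ref{eq1-012})$ then rewrites as $c_{r,i}=\Phi_i\,M(c_{r,i})$, while for the unperturbed problem one has $c_r^*=M(c_r^*)$. By Proposition~\ref{prop1-1}, $\psi^c(x)$ is strictly decreasing in $c$ for each fixed $x<0$, so $M$ is strictly decreasing on $(\nu,\tilde c_r)$, and therefore $G(c):=c/M(c)$ is strictly increasing there. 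Evaluating:
\begin{equation*}
G(c_{r,2})=1-\epsilon,\qquad G(c_r^*)=1,\qquad G(c_{r,1})=1+\epsilon,
\end{equation*}
strict monotonicity of $G$ gives $c_{r,2}<c_r^*<c_{r,1}$, which is the first assertion in $(\ref{eq1-1-13})$.

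For the limits, continuity of $c\mapsto\psi^c$ uniformly on compact subsets of $(-\infty,0]$ (this is exactly the continuity argument used in Step~2 of the proof of Lemma~\ref{le-05.1}, applied to the present equation by the shift $c'=c-\nu$), combined with $(\mathbf{J_*})$ and dominated convergence, makes $M$ continuous on $(\nu,\tilde c_r)$; together with strict monotonicity this ensures $G^{-1}$ is continuous. Letting $\epsilon\to 0$ sends $1\pm\epsilon\to 1$, hence $c_{r,i}=G^{-1}(1\pm\epsilon)\to G^{-1}(1)=c_r^*$. The same argument with $(c-\nu)$ replaced by $(c+\nu)$ in the semi-wave equation proves $(\ref{eq1-1-14})$. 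The only real subtlety is verifying that $\hat\Phi_i$ inherits the monotonicity and limit conditions needed to invoke the uniqueness clause of Proposition~\ref{prop1-1}; everything else is essentially bookkeeping.
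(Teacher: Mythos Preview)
Your proof is correct and takes a genuinely different route from the paper's. The paper argues by substitution and comparison: plugging $(c_{r,2},\Phi_{r,2})$ into \eqref{eq1-012} with $f$ in place of $f_2$ produces a strict differential inequality (since $f>f_2$ on $(0,1-\epsilon)$), from which $c_{r,2}<c_r^*$ is deduced via an implicit semi-wave comparison, and symmetrically for $c_{r,1}$; the limits are then claimed from ``continuity and monotonicity of $c_{r,i}$ in $\epsilon$'' without further justification. Your rescaling $\hat\Phi_i=\Phi_{r,i}/\Phi_i$ instead exploits the logistic identity $f_i(\Phi_i u)/\Phi_i=f(u)$ to collapse all three problems onto the single family $\psi^c$, so that $c_{r,2},\,c_r^*,\,c_{r,1}$ become the roots of $c=\alpha M(c)$ for $\alpha=1-\epsilon,\,1,\,1+\epsilon$; the ordering is then the trivial monotonicity of $G(c)=c/M(c)$, and the limit is immediate from continuity of $G^{-1}$. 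This is cleaner and more explicit, and in particular makes the limit step rigorous where the paper only sketches it; the trade-off is that your argument uses the exact quadratic form of $f$ (which the paper adopts in this section but does not exploit here), whereas the paper's inequality approach would extend to any KPP $f$ with $f_2\le f\le f_1$. One small point to tighten: invoking Proposition~\ref{prop1-1} via the shift $c'=c_{r,i}-\nu$ needs $c_{r,i}>\nu$, not merely $c_{r,i}\in(0,\tilde c_r)$; this follows because your $\hat\Phi_i$ is already a strictly decreasing semi-wave for \eqref{eq1-2002} at speed $c'$, and such profiles exist only for $c'\in(0,\tilde c)$ (equivalently, the fixed point in Step~2 of Lemma~\ref{le-05.1}, applied to $f_i$, lies in $(\nu,\tilde c_r)$), but it is worth stating.
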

\begin{proof}
First, we prove ${c}_{r,2}<c_{r}^{*}.$	 Take $\left({c}_{r,2}, 
\Phi_{r,2}\right)$ into $(\ref{eq1-012})$, we have 
\[d  \int_{-\infty}^{0} {J}\left(x-y\right)  
\Phi_{r,2}(y) \mathrm{d} y-d ~ \Phi_{r,2}+({c}_{r,2}-\nu) 
{\Phi_{r,2}^{\prime}}(x)+f\left(\Phi_{r,2}(x) 
\right)>0, \text{ for } x\in (-\infty,0).\]
Since $\Phi_{r,2}$ is strictly decreasing, it implies ${c}_{r,2}<c_{r}^{*}.$

Using the similar techniques to take $\left({c}_{r,1}, 
\Phi_{r,1}\right)$ into $(\ref{eq1-012})$, we get
\[d  \int_{-\infty}^{0} {J}\left(x-y\right)  
\Phi_{r,1}(y) \mathrm{d} y-d ~ \Phi_{r,1}+({c}_{r,1}-\nu) 
{\Phi_{r,1}^{\prime}}(x)+f(\Phi_{r,1}(x) 
)<0, \text{ for } x\in (-\infty,0).\] According to the strict monotonicity of $\Phi_{r,1}$ in $x$, it implies ${c}_{r,1}>c_{r}^{*}.$
Meanwhile, ${c}_{r,1}$ is continuous and increasing in $\epsilon$ and 
${c}_{r,2}$ is continuous and decreasing in $\epsilon$.
It follows that    $$\lim\limits _{\epsilon 
	\rightarrow 0} {c}_{r,1}=
\lim\limits_{\epsilon \rightarrow 0} 
{c}_{r,2}=c_{r}^{*}.$$   

Similarly, $(\ref{eq1-1-14})$ is satisfied.
\end{proof}
To complete the proof of the Theorem~\ref{th1-1}, we first show the following lemmas.
\begin{lem}\label{le5-1}
	\[\liminf\limits_{t\rightarrow\infty}\dfrac{h(t)}{t}\geq {c}_{r,2}.\]
\end{lem}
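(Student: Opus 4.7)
The plan is to construct an explicit lower solution of $(\ref{eq1-1})$ whose right free boundary advances with linear speed $c_{r,2}-2\delta$ for an arbitrary small $\delta>0$, then apply the lower-solution version of the comparison principle (Lemma~\ref{th3-3.3} and the Remark following it) and let $\delta\to 0^+$. As preparation, since spreading occurs, Lemma~\ref{lem5.2} gives $g(t)\to-\infty,\ h(t)\to+\infty$ and Theorem~\ref{ths5-6.2} gives $u(t,x)\to 1$ locally uniformly on $\mathbb R$. I would first fix small $\eta>0$, choose $L_0\gg h_0$ so large that the semi-wave tails $\Phi_{r,2}(-\infty)=\Phi_{l,2}(-\infty)=1-\epsilon$ are well approximated at distance $L_0$, and then pick $T\gg 1$ with $[-L_0,L_0]\subset(g(T),h(T))$ and $u(T,x)$ strictly dominating the initial value of the lower solution constructed below.

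The ansatz would be the asymmetric two-bump
\[
\underline h(t):=(c_{r,2}-2\delta)t+L_0,\qquad \underline h_l(t):=(c_{l,2}-2\delta)t+L_0,\qquad \underline g(t):=-\underline h_l(t),
\]
\[
\underline u(t,x):=(1-\eta)\bigl[\Phi_{r,2}(x-\underline h(t))+\Phi_{l,2}(-x-\underline h_l(t))-(1-\epsilon)\bigr]_+ ,
\]
for $x\in[\underline g(t),\underline h(t)]$, where $\Phi_{l,2}$ is the left-semi-wave defined via $(\ref{eq1-3})$ with $f$ replaced by $f_2$. Plugging into the parabolic operator of $(\ref{eq1-1})$ and using the two semi-wave identities
\begin{align*}
d\!\int_{-\infty}^{0}\!J(\xi-w)\Phi_{r,2}(w)\,\mathrm dw-d\Phi_{r,2}(\xi)+(c_{r,2}-\nu)\Phi_{r,2}'(\xi)+f_2(\Phi_{r,2}(\xi))&=0,\\
d\!\int_{-\infty}^{0}\!J(\xi-w)\Phi_{l,2}(w)\,\mathrm dw-d\Phi_{l,2}(\xi)+(c_{l,2}+\nu)\Phi_{l,2}'(\xi)+f_2(\Phi_{l,2}(\xi))&=0,
\end{align*}
together with $f_2\le f$, the KPP inequality $(1-\eta)f(u)\le f((1-\eta)u)$ from $(\mathbf{f4})$, and $\Phi_{r,2}',\Phi_{l,2}'\le 0$, the drift contributions of the two bumps collapse to the non-positive expressions $(1-\eta)\Phi_{r,2}'(x-\underline h(t))\cdot 2\delta$ and $(1-\eta)\Phi_{l,2}'(-x-\underline h_l(t))\cdot 2\delta$; the truncation tails from restricting the convolution to $[\underline g(t),\underline h(t)]$ are of order $\int_{L_0}^{\infty}J$ and are absorbed into $2\delta$ by enlarging $L_0$ via $(\mathbf{J_*})$.

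For the two free-boundary inequalities, I would use the defining relations (the analogues of $(\ref{eq1-303})$ for $f_2$),
\[
c_{r,2}=\mu\!\int_{-\infty}^{0}\!\int_{0}^{\infty}\!J(x-y)\Phi_{r,2}(x)\,\mathrm dy\,\mathrm dx,\qquad c_{l,2}=\mu\!\int_{-\infty}^{0}\!\int_{0}^{\infty}\!J(x-y)\Phi_{l,2}(x)\,\mathrm dy\,\mathrm dx,
\]
and, after the change of variables $x\mapsto x-\underline h(t)$, decompose $\mu\int_{\underline g}^{\underline h}\!\int_{\underline h}^{\infty}J(x-y)\underline u\,\mathrm dy\,\mathrm dx$ as $(1-\eta)c_{r,2}$ minus two tail error terms that vanish uniformly in $t$ as $L_0\to\infty$ by $(\mathbf{J_*})$; for $L_0$ sufficiently large this exceeds $c_{r,2}-2\delta=\underline h'(t)$, and the analogous bound for $-\underline g'(t)$ against $c_{l,2}-2\delta$ follows symmetrically. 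The initial ordering secured in the first paragraph makes $(\underline u,\underline g,\underline h)$ a lower solution of the time-shifted problem starting at $T$; the comparison conclusion yields $h(T+t)\ge\underline h(t)$ for all $t\ge 0$, hence $\liminf_{t\to\infty}h(t)/t\ge c_{r,2}-2\delta$, and letting $\delta\to 0^+$ finishes the proof.

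The hard part will be the simultaneous verification of both free-boundary inequalities and the PDE sub-solution inequality, which really forces the use of \emph{asymmetric} semi-waves on the two sides: the naive symmetric choice $\Phi_{r,2}(-x-\underline h(t))$ in place of $\Phi_{l,2}(-x-\underline h_l(t))$ would leave a residual $-2\nu\Phi_{r,2}'$ term in the drift computation on the left bump and shrink the achievable lower bound to $c_{r,2}-2\nu$. The four parameters $\eta,\delta,L_0,T$ must therefore be chosen in the right order so that the drift cancellations, the boundary flux estimates (both controlled by $(\mathbf{J_*})$), and the initial ordering hold simultaneously.
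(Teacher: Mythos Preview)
Your approach is correct and takes a genuinely different route from the paper. The paper's proof is much shorter: it sets $\underline U(t,x):=\Phi_{r,2}(x-c_{r,2}t)$ on the half-line $(-\infty,c_{r,2}t]$ with exact speed $c_{r,2}$ (no $\delta$ slack, no second bump), observes that $\underline U\le 1-\epsilon$ everywhere, uses the spreading hypothesis to find $T$ with $u(T,\cdot)>1-\epsilon/2$, and then invokes comparison to conclude $h(t+T)\ge c_{r,2}t$ directly. That argument is formally cleaner but is silent on how the comparison is carried out near the left endpoint $x=g(t+T)$, where $u=0$ while $\underline U\approx 1-\epsilon$; your asymmetric two-bump construction sidesteps this by supplying its own left free boundary $\underline g(t)=-\underline h_l(t)$ driven by the left semi-wave $\Phi_{l,2}$, so that the lower-solution version of Lemma~\ref{th3-3.3} applies without further justification. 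The trade-off is exactly the parameter bookkeeping $(\eta,\delta,L_0,T)$ and the cross-term estimates you anticipate. One small technical point: because the equation contains the gradient term $\nu u_x$, the positive-part truncation $[\,\cdot\,]_+$ introduces a kink that is awkward in the $C^1$ setting of Lemma~\ref{th3-3.3}; it is cleaner to drop it and note that the untruncated expression is already $\le 0$ at $x=\underline g(t)$ and $x=\underline h(t)$, which is all the lower-solution inequality requires at those points.
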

	\begin{proof}
		For any given $\epsilon>0,$ define 
		$$\underline U\left(t,x\right):=\Phi_{r,2}\left(x-{c}_{r,2}t\right), ~\underline h\left(t\right)=c_{r,2} t, \text{ for }
t>0, ~x\in\left(-\infty,{c}_{r,2}t\right],$$ where $\left({c}_{r,2},\Phi_{r,2} \right)$ is the solution pair of the following problem
	\begin{equation}\label{eq1-6}
		\left\{\begin{array}{l}d  \int_{-\infty}^{0} {J}\left(x-y\right)  
			\Phi\left(y\right) \mathrm{d} y-d~  \Phi\left(x\right)+\left(c-\nu\right) 
			{\Phi}^{\prime}\left(x\right)+f_2
			\left(\Phi\left(x\right) 
			\right)=0, ~
			-\infty<x<0, \\ \Phi\left(-\infty\right)=1-\epsilon, ~
			\Phi\left(0\right)={0},\\
			c=\mu\int_{-\infty}^{0} \int_{0}^{\infty} 
			J\left(x-y\right)\Phi(x){\rm d}y{\rm d}x .
		\end{array}\right.
	\end{equation}
Then we can get that $\underline 
U(t,x)\leq 1-\epsilon,\text{ for } t>0,~x\in(-\infty,{c}_{r,2}t].$ And
 \begin{equation}
	\begin{aligned}
	\underline U_t=&-{c}_{r,2}{\Phi_{r,2}^{\prime}}(x-{c}_{r,2}t)
	\\=& \int_{-\infty}^{0} {J}\left(x-{c}_{r,2}t-y\right)  
	\Phi_{r,2}(y) \mathrm{d} y-d~  \Phi_{r,2}\left(x-{c}_{r,2}t\right)-\nu 
	{\Phi_{r,2}^{\prime}}(x-{c}_{r,2}t)+f_2
	\left(\Phi_{r,2}\left(x-{c}_{r,2}t\right)\right)
	\\=& \int_{-\infty}^{{c}_{r,2}t} {J}\left(x-y\right)  
	\Phi_{r,2}\left(y-{c}_{r,2}t\right) \mathrm{d} y-d~  \Phi_{r,2}(x-{c}_{r,2}t)-\nu 
	{\Phi_{r,2}^{\prime}}(x-{c}_{r,2}t)+f_2
	\left(\Phi_{r,2}(x-{c}_{r,2}t)\right)
	\\\leq&\int_{-\infty}^{{c}_{r,2}t} {J}\left(x-y\right)  
	\underline U\left(t,y\right) \mathrm{d} y-d  ~\underline U(t,x)-\nu 
	\underline U_x\left(t, x\right )+f
	\left(\underline U\left(t, x\right)\right).
	\end{aligned}
\end{equation}
By Theorem~\ref{ths5-6.2}, when the spreading happens, we have $\lim\limits_{t\rightarrow\infty}u(t,x)=1    $
uniformly in any compact subset of $\mathbb R.$	 Thus, there is $T>0$ such that 
$$u(t,x)>1-\epsilon/2>\underline U(T,x),~t\geq T.$$

Moreover,
\begin{equation}
	\begin{aligned}
		c_{r,2}=&\mu\int_{-\infty}^{0} \int_{0}^{\infty} 
		J\left(x-y\right)
		\Phi_{r,2}(x) \mathrm{d} y \mathrm{d} x
		\\=& \mu\int_{-\infty}^{{c}_{r,2}t} \int_{{c}_{r,2}t}^{\infty} 
		J\left(x-y\right)
		\Phi_{r,2}(x-{c}_{r,2}t) \mathrm{d} y \mathrm{d} x
		\\=&\mu\int_{-\infty}^{{c}_{r,2}t} \int_{{c}_{r,2}t}^{\infty} 
		J\left(x-y\right)
		\underline U(t,x) \mathrm{d} y \mathrm{d} x.
	\end{aligned}
\end{equation}
Then by comparison principle, it follows 
\[\underline U(t,x)\leq u\left(t+T,x\right),~c_{r,2}t\leq h(t+T),~\text{for}~t>0,~x\in 
(-\infty,c_{r,2}t].\]
Thus, \begin{equation}\label{eq1-9}
	\liminf\limits_{t\rightarrow\infty}\dfrac{h(t)}{t}\geq c_{r,2}.
\end{equation}
	\end{proof}

\begin{lem}\label{le5-2}
	\[\limsup\limits_{t\rightarrow\infty}\dfrac{h(t)}{t}\leq {c}_{r,1}.\]
\end{lem}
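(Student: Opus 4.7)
The plan is to construct an upper solution of the free boundary problem that moves at speed ${c}_{r,1}$ on its right front, using the semi-wave profile $\Phi_{r,1}$ associated with the slightly enlarged reaction $f_1(u)=u(1-u/(1+\epsilon))$, and then apply the comparison principle in Lemma~\ref{th3-3.3}. The guiding idea is dual to Lemma~\ref{le5-1}: there a sub-wave sitting strictly below the limiting state $1$ forces $h(t)\geq c_{r,2}t+O(1)$; here a super-wave sitting strictly above $1$ will force $h(t)\leq c_{r,1}t+O(1)$.

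First, since the spreading occurs, Theorem~\ref{ths5-6.2} (together with the $\overline u(t)\to\tilde u_0=1$ upper envelope obtained in its proof) gives $\limsup_{t\to\infty}u(t,x)\leq 1$ uniformly for $x\in[g(t),h(t)]$. Hence, for the prescribed $\epsilon>0$, I can fix $T>0$ large so that $u(T,x)\leq 1+\epsilon/2$ on $[g(T),h(T)]$. I then define
$$
\bar h(t):={c}_{r,1}\,t+L,\qquad \bar U(t,x):=\Phi_{r,1}\bigl(x-\bar h(t)\bigr),\qquad x\leq\bar h(t),
$$
with $L>0$ chosen so large that $\bar h(0)=L>h(T)$ and, using $\Phi_{r,1}(-\infty)=1+\epsilon$ together with the continuity and monotonicity of $\Phi_{r,1}$, $\Phi_{r,1}(x-L)\geq 1+\epsilon/2\geq u(T,x)$ for every $x\in[g(T),h(T)]$.

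Next I will verify the three super-solution inequalities. Writing $\xi=x-\bar h(t)$ and differentiating $\bar U$ in $t$, the semi-wave identity for $\Phi_{r,1}$ gives
$$
\bar U_t=-{c}_{r,1}\Phi_{r,1}'(\xi)=d\!\int_{-\infty}^{0}\!J(\xi-\eta)\Phi_{r,1}(\eta)\,{\rm d}\eta-d\,\Phi_{r,1}(\xi)-\nu\Phi_{r,1}'(\xi)+f_1(\Phi_{r,1}(\xi)).
$$
Since $f_1(u)-f(u)=\frac{\epsilon}{1+\epsilon}u^2\geq 0$, the right-hand side dominates the analogous expression with $f_1$ replaced by $f$ and with the integration carried out over $(-\infty,\bar h(t)]$ (using $\bar U\equiv 0$ outside). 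The free-boundary identity $(\ref{eq1-303})$ applied to $\Phi_{r,1}$ yields $\bar h'(t)={c}_{r,1}=\mu\int_{-\infty}^{\bar h(t)}\!\int_{\bar h(t)}^{\infty}J(x-y)\bar U(t,x)\,{\rm d}y\,{\rm d}x$, and $\bar U(t,\bar h(t))=\Phi_{r,1}(0)=0$ gives the Dirichlet condition at the right front.

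The main technical point is that $\bar U$ naturally lives on a left half-line, whereas the comparison principle in Lemma~\ref{th3-3.3} is stated with two moving boundaries; I will handle this by pairing $\bar U$ with a sufficiently negative left boundary $\bar g(t)\leq g(t+T)$ (for instance $\bar g(t)\equiv -M-Kt$ with $M,K$ large), extending $\bar U$ trivially beyond, and checking that the left free-boundary inequality is then vacuously satisfied because $\bar U$ vanishes near $\bar g(t)$ for large $M$ (thanks to $\Phi_{r,1}(-\infty)=1+\epsilon$, we actually bound $\bar U$ by the constant $1+\epsilon$ and use the outward flux bound together with a large $K$). Once the comparison principle delivers $h(t+T)\leq\bar h(t)$ for all $t>0$, dividing by $t$ and letting $t\to\infty$ yields $\limsup_{t\to\infty}h(t)/t\leq{c}_{r,1}$, completing the proof.
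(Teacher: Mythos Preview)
Your approach is correct and essentially identical to the paper's: build the upper barrier $\overline U(t,x)=\Phi_{r,1}(x-c_{r,1}t-\tilde x)$ from the semi-wave for $f_1$, verify the PDE inequality via $f_1\ge f$ and the right free-boundary equality via the semi-wave speed relation, then conclude $h(t+T)\le c_{r,1}t+\tilde x$ by comparison. One small slip: near your artificial left boundary $\bar g(t)$ the profile $\bar U$ is close to $1+\epsilon$, not zero (as your own parenthetical correctly notes); the left free-boundary inequality $\bar g'(t)\le -\mu\int_{\bar g(t)}^{\bar h(t)}\int_{-\infty}^{\bar g(t)}J(x-y)\bar U\,dy\,dx$ then holds for $K$ large because $(\mathbf{J_*})$ bounds the outward flux by $\mu(1+\epsilon)\int_0^\infty sJ(s)\,ds<\infty$---the paper sidesteps this entirely by applying the comparison directly on the half-line $(-\infty,\bar h(t)]$.
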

\begin{proof}
	For the following problem
 $$
\begin{cases}
\tilde u^{\prime}(t)=f(\tilde u), & t>0,\\
\tilde u(0)=\|u_0\|_{\infty},
\end{cases}
$$
we can get $ u
\leq \tilde u$, which implies for any $\epsilon>0,$
 there is $\tilde T>0$ such that $$u(t,x)\leq 1+\epsilon/2,~\text{for}~t\geq\tilde 
 T,~x\in \left(-\infty,h(t)\right].$$

In view that $\left({c}_{r,1}, \Phi_{r,1}(x)\right)$ is a 
solution of problem $(\ref{eq1-012})$ with $f$ replaced by ${f}_{1}$ and 
$\Phi_{r,1}(-\infty)=1+\epsilon$. Hence there exists 
$\tilde{x}>h(\tilde{T})$ large enough such that
$$
u(\tilde{T}, x) \leq 
1+\epsilon/2<\Phi_{r,1}\left(x-\tilde{x}\right),~\text { for } 
x \in(-\infty, h(\tilde{T})].
$$
Let
$$
\overline U(t, x):=\Phi_{r,1}\left(x-{c}_{r,1} 
t-\tilde{x}\right),~\overline h(t)=c_{r,1} t+\tilde x, ~ \text { for } t>0,~x \in\left(-\infty, {c}_{r,1} 
t+\tilde{x}\right],
$$
then we have 
\begin{equation}
	\begin{aligned}
		\overline  
		U_t=&-{c}_{r,1}{\Phi_{r,1}^{\prime}}\left(x-{c}_{r,1}t-\tilde{x}\right)
		\\=& \int_{-\infty}^{0} {J}\left(x-{c}_{r,1}t-\tilde{x}-y\right)  
		\Phi_{r,1}(y) \mathrm{d} y-d~  \Phi_{r,1}(x-{c}_{r,1}t-\tilde{x})-\nu 
		{\Phi_{r,1}^{\prime}}(x-{c}_{r,1}t-\tilde{x})+f_1
		\left(\Phi_{r,1}(x-{c}_{r,1}t)\right)
		\\=& \int_{-\infty}^{{c}_{r,1}t+\tilde{x}} {J}\left(x-y\right)  
		\Phi_{r,1}\left(y-{c}_{r,1}t-\tilde{x}\right) \mathrm{d} y-d  ~
		\Phi_{r,1}(x-{c}_{r,1}t-\tilde{x})-\nu 
		{\Phi_{r,1}^{\prime}}\left (x-{c}_{r,1}t-\tilde{x}\right )+f_1
		\left(\Phi_{r,1}\left(x-{c}_{r,1}t-\tilde{x}\right)\right)
		\\\geq&\int_{-\infty}^{{c}_{r,1}t+\tilde{x}} {J}\left(x-y\right)  
		\overline  U\left(t,y\right) \mathrm{d} y-d ~ \overline  U(t,x)-\nu 
		\overline  U_x(t,x)+f
		(\overline  U(t,x)).
	\end{aligned}
\end{equation}
Moreover,
\begin{equation}
	\begin{aligned}
	\overline h^{\prime}(t)=c_{r,1}=&\mu\int_{-\infty}^{0} \int_{0}^{\infty} 
J\left(x-y\right) 
\Phi_{r,1}\left(x\right) \mathrm{d} y \mathrm{d} x
\\=& \mu\int_{-\infty}^{{c}_{r,1}t+\tilde{x}} 
\int_{{c}_{r,1}t+\tilde{x}}^{\infty} 
J\left(x-y\right) 
\Phi_{r,1}\left(x-{c}_{r,1}t-\tilde{x}\right) \mathrm{d} y \mathrm{d} x
\\=&\mu\int_{-\infty}^{{c}_{r,1}t+\tilde{x}} 
\int_{{c}_{r,1}t+\tilde{x}}^{\infty} 
J\left(x-y\right) 
\overline U\left(t,x\right) \mathrm{d} y \mathrm{d} x.
	\end{aligned}
\end{equation}
Thus,
applying the 
comparison principle, we have
$$ u(t+\tilde{T}, x) \leq \overline U\left(t, x\right),~h(t+\tilde{T}) \leq {c}_{r,1} t+\tilde{x},$$ for  $t>0$ and $x \in(-\infty, 
h(t+\tilde{T})]$.
It yields 
\begin{equation}\label{eq1-13}
\limsup _{t \rightarrow \infty} \frac{h(t)}{t} \leq {c}_{r,1}.
\end{equation}
\end{proof}
\begin{thm}\label{rem1-3}
Since $\epsilon>0$ is arbitrarily chosen and small enough, combining $(\ref{eq1-9})$ 
and 
$(\ref{eq1-13})$,  it follows
$$
\lim _{t \rightarrow \infty} \frac{h(t)}{t}=c_{r}^{*}.
$$
\end{thm}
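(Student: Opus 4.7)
The plan is to deduce the final equality by squeezing $h(t)/t$ between the two semi-wave speeds $c_{r,2}$ and $c_{r,1}$ associated with the perturbed reaction terms $f_2$ and $f_1$, and then letting the perturbation parameter $\epsilon$ tend to $0$. Since Lemmas \ref{le5-1} and \ref{le5-2} already do the heavy lifting by supplying the lower and upper bounds through carefully constructed sub- and super-solutions of semi-wave type, what remains is essentially a limiting argument that invokes Proposition \ref{prop1-2}.

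First, I would fix an arbitrary $\epsilon>0$, recall from Lemma \ref{le5-1} that $\liminf_{t\to\infty} h(t)/t\ge c_{r,2}(\epsilon)$ and from Lemma \ref{le5-2} that $\limsup_{t\to\infty} h(t)/t\le c_{r,1}(\epsilon)$, so that
\begin{equation*}
c_{r,2}(\epsilon)\;\le\;\liminf_{t\to\infty}\frac{h(t)}{t}\;\le\;\limsup_{t\to\infty}\frac{h(t)}{t}\;\le\;c_{r,1}(\epsilon).
\end{equation*}
Crucially, the quantities $\liminf_{t\to\infty} h(t)/t$ and $\limsup_{t\to\infty} h(t)/t$ do not depend on $\epsilon$: $\epsilon$ only enters through the auxiliary speeds $c_{r,i}(\epsilon)$ that bound them.

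Next, I would let $\epsilon\to 0^+$. By Proposition \ref{prop1-2} we have $\lim_{\epsilon\to 0}c_{r,1}(\epsilon)=\lim_{\epsilon\to 0}c_{r,2}(\epsilon)=c_r^*$, so both sides of the squeeze collapse to $c_r^*$. This forces $\liminf_{t\to\infty} h(t)/t=\limsup_{t\to\infty} h(t)/t=c_r^*$, that is, the limit exists and equals $c_r^*$.

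There is essentially no obstacle here; the only point that deserves care is ensuring that the choice of $\epsilon$ is made first in the statement of each bound and that the bounds $(\ref{eq1-9})$ and $(\ref{eq1-13})$ are valid uniformly with respect to the starting time $T$ (respectively $\tilde T$) whose dependence on $\epsilon$ was absorbed by taking $t\to\infty$. Once that is noted, the conclusion $\lim_{t\to\infty} h(t)/t=c_r^*$ follows immediately.
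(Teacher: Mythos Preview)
Your proposal is correct and matches the paper's approach exactly: the paper treats this theorem as an immediate consequence of the bounds $(\ref{eq1-9})$, $(\ref{eq1-13})$ and Proposition~\ref{prop1-2}, with the proof essentially contained in the statement itself. Your squeeze argument, including the observation that $\liminf$ and $\limsup$ are independent of $\epsilon$, is precisely the intended reasoning.
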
	

\begin{lem}\label{le5-3}
\[\liminf\limits_{t\rightarrow\infty}\dfrac{-g(t)}{t}\geq c_{l,2}.\]	
\end{lem}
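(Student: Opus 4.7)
The plan is to mirror the construction in Lemma \ref{le5-1} but with the leftward semi-wave, using the reflection $\eta = -x - c_{l,2}t$ so that the asymmetry induced by the advection $\nu$ enters through equation $(\ref{eq1-3})$ rather than $(\ref{eq1-012})$. Concretely, I would take $(c_{l,2}, \Phi_{l,2})$ to be the solution pair of $(\ref{eq1-3})$ with $f$ replaced by $f_2$ and $\Phi_{l,2}(-\infty)=1-\epsilon$, and set
\begin{equation*}
\underline U(t,x) := \Phi_{l,2}(-x - c_{l,2}t), \qquad \underline g(t) := -c_{l,2} t,
\end{equation*}
for $t\ge 0$ and $x \in [\underline g(t), \infty)$. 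Then $\underline U(t,\underline g(t)) = \Phi_{l,2}(0) = 0$ and $\underline U(t,x)\le 1-\epsilon$.

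Next I would verify the lower-solution inequalities. With $\eta = -x-c_{l,2}t$, we have $\underline U_t = -c_{l,2}\Phi_{l,2}'(\eta)$ and $\underline U_x = -\Phi_{l,2}'(\eta)$. The change of variables $y = -z - c_{l,2}t$ together with the evenness of $J$ converts the integral in $(\ref{eq1-3})$ into $\int_{\underline g(t)}^{\infty} J(x-y)\underline U(t,y)\,dy$, and the semi-wave equation with coefficient $(c_{l,2}+\nu)$ rearranges to
\begin{equation*}
\underline U_t = d\int_{\underline g(t)}^{\infty} J(x-y)\underline U(t,y)\,dy - d\,\underline U - \nu\,\underline U_x + f_2(\underline U) \le d\int_{\underline g(t)}^{\infty} J(x-y)\underline U(t,y)\,dy - d\,\underline U - \nu\,\underline U_x + f(\underline U),
\end{equation*}
since $f_2(u)\le f(u)$ for $u\ge 0$. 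The same change of variables applied to the flux integral yields
\begin{equation*}
\mu\int_{\underline g(t)}^{\infty}\int_{-\infty}^{\underline g(t)} J(x-y)\underline U(t,x)\,dy\,dx = \mu\int_{-\infty}^{0}\int_{0}^{\infty} J(\eta-\zeta)\Phi_{l,2}(\eta)\,d\zeta\,d\eta = c_{l,2} = -\underline g'(t),
\end{equation*}
so the free boundary inequality for a lower solution holds with equality.

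Then, using Theorem \ref{ths5-6.2}, since spreading occurs we have $u(t,x)\to 1$ locally uniformly; thus there exists $T>0$ sufficiently large so that $g(T)< 0 = \underline g(0)$ and $u(T,x)>1-\epsilon/2>\underline U(0,x)$ on the relevant $x$-range. Invoking the comparison principle (Lemma \ref{th3-3.3}) on the left front yields $g(t+T)\le \underline g(t) = -c_{l,2}t$ for all $t>0$, whence
\begin{equation*}
\liminf_{t\to\infty}\frac{-g(t)}{t} \ge \lim_{t\to\infty}\frac{-\underline g(t-T)}{t} = c_{l,2}.
\end{equation*}

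The main technical obstacle is the careful bookkeeping of signs and change of variables in Step 2: because the advection term $-\nu u_x$ is \emph{not} symmetric under the reflection $x\mapsto -x$, the semi-wave coefficient is $(c+\nu)$ on the left rather than $(c-\nu)$ on the right, and one must confirm that after plugging $\underline U = \Phi_{l,2}(-x-c_{l,2}t)$ into the nonlocal equation the two $\pm\nu\Phi_{l,2}'$ contributions cancel so as to reproduce exactly the PDE inequality. Modulo this sign check, the argument is essentially the mirror image of Lemma \ref{le5-1}; the comparison with $u$, whose support $[g(t+T),h(t+T)]$ is finite, works in the same manner as there because only the leftward flux across $\underline g(t)$ governs the evolution of the leftward front.
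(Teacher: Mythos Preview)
Your proposal is correct and follows essentially the same approach as the paper: the paper likewise sets $\underline V(t,x)=\Phi_{l,2}(-x-c_{l,2}t)$ with $\underline g(t)=-c_{l,2}t$, verifies via the semi-wave equation $(\ref{eq1-3})$ (with coefficient $c+\nu$) and the evenness of $J$ that this is a lower solution, checks the flux identity $-\underline g'(t)=c_{l,2}$, and then uses the spreading limit from Theorem~\ref{ths5-6.2} to pick a time shift and apply the comparison principle. Your sign-bookkeeping concern is exactly the point the paper's computation addresses, and your treatment of it is correct.
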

\begin{proof}
	Let    $$\underline V(t,x)=\Phi_{l,2}(-x-c_{l,2}t),~\underline g(t)=-c_{l,2}t,~\text{ for } t>0, ~x\in[-c_{l,2}t, 
	\infty),$$   
then $\underline V(t,x)\leq 1-\epsilon.$ And by $(\ref{eq1-3})$, explicit calculations give
	\begin{equation}
		\begin{aligned}
			\underline V_t=&-{c}_{l,2}{\Phi_{l,2}}^{\prime}\left(-x-{c}_{l,2}t\right)
\\=& \int_{-\infty}^{0} {J}\left(-x-{c}_{l,2}t-y\right)  
\Phi_{l,2}\left(y\right) \mathrm{d} y-d  \Phi_{l,2}\left(-x-{c}_{l,2}t\right)+\nu 
{\Phi_{l,2}^{\prime}}\left(-x-{c}_{l,2}t\right)+f_2
\left(\Phi_{l,2}\left(-x-{c}_{l,2}t\right)\right)
\\=& \int_{-{c}_{l,2}t}^{\infty} {J}\left(x-y\right)  
\Phi_{l,2}\left(-y-{c}_{l,2}t\right) \mathrm{d} y-d  
\Phi_{l,2}\left(x-{c}_{l,2}t\right)+\nu 
{\Phi_{l,2}^{\prime}}\left(-x-{c}_{l,2}t\right)+f_2
\left(\Phi_{l,2}\left(x-{c}_{l,2}t\right)\right)
\\\leq&\int_{-{c}_{l,2}t}^{\infty} {J}\left(x-y\right)  
\underline V\left(t,y\right) \mathrm{d} y-d  \underline V\left(t,x\right)-\nu 
\underline V_x\left(t,x\right)+f
\left(\underline V\left(t,x\right)\right).		\end{aligned}
	\end{equation}
Since $\lim\limits_{t\rightarrow\infty}u(t,x)=1$
uniformly in any compact subset of $\mathbb R$ as the spreading occurs,  there is $ \bar T>0$ such 
that 
$$u(t,x)>1-\epsilon/2>\underline V\left(\bar T,x\right),~t\geq \bar T.$$ 

Moreover,
\begin{equation}
	\begin{aligned}
	-\underline g^{\prime}(t)=c_{l,2}=&\mu\int_{-\infty}^{0} \int_{0}^{\infty} 
J\left(x-y\right) 
\Phi_{l,2}\left(x\right) \mathrm{d} y \mathrm{d} x
\\=& \mu\int_{\infty}^{{-c}_{l,2}t} \int_{-{c}_{l,2}t}^{-\infty} 
J\left(x-y\right) 
\Phi_{r,2}\left(-x-{c}_{r,2}t\right) \mathrm{d} y \mathrm{d} x
\\=&\mu\int_{-{c}_{l,2}t}^{\infty} \int_{-\infty}^{-{c}_{l,2}t} 
J\left(x-y\right) 
\underline V\left(t,x\right) \mathrm{d} y \mathrm{d} x.
	\end{aligned}
\end{equation}
Then by comparison principle, it follows 
\[\underline V(t, x)\leq u(t+\bar T,x),~-c_{l,2}t\geq g(t+\bar T),~\text{for}~t>0,~x\in 
[-c_{l,2}t,\infty).\]
Thus, \begin{equation}\label{eq1-16}
	\liminf\limits_{t\rightarrow\infty}\dfrac{-g(t)}{t}\geq c_{l,2}.
\end{equation}
\end{proof}

\begin{lem}\label{le5-4}
\[\limsup\limits_{t\rightarrow\infty}\dfrac{-g(t)}{t}\leq c_{l,1}.\]	
\end{lem}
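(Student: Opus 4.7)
The plan is to mirror the argument of Lemma~\ref{le5-2} by constructing a leftward-moving upper solution built from the semi-wave $\Phi_{l,1}$, which is the natural leftward analogue of the rightward wave $\Phi_{r,1}$ used there. The asymmetry between leftward and rightward speeds comes precisely from the sign difference between $(c-\nu)\Phi'$ in (\ref{eq1-012}) and $(c+\nu)\Phi'$ in (\ref{eq1-3}), so replacing $\Phi_{r,1}(x-\cdot)$ by $\Phi_{l,1}(-x-\cdot)$ and reversing the role of $h$ and $-g$ should yield the desired bound.

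First, I would use the ODE comparison $\tilde u'=f(\tilde u)$, $\tilde u(0)=\|u_0\|_\infty$, exactly as in Lemma~\ref{le5-2}, to find $\tilde T>0$ such that $u(t,x)\le 1+\epsilon/2$ for all $t\ge\tilde T$ and $x\in[g(t),h(t)]$. Since $\Phi_{l,1}(-\infty)=1+\epsilon$ and $\Phi_{l,1}$ is strictly decreasing with $\Phi_{l,1}(0)=0$, I can then choose $\tilde x > -g(\tilde T)$ large enough so that
\[
u(\tilde T,x)\le 1+\epsilon/2 < \Phi_{l,1}(-x-\tilde x),\qquad x\in[g(\tilde T),\infty),
\]
which provides the ordered initial datum needed at time $\tilde T$.

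Next, I would define the candidate upper solution
\[
\overline V(t,x):=\Phi_{l,1}(-x-c_{l,1}t-\tilde x),\qquad \overline g(t):=-c_{l,1}t-\tilde x,\qquad t>0,\ x\in[\overline g(t),\infty),
\]
so that the argument of $\Phi_{l,1}$ lies in $(-\infty,0]$ on the relevant domain. A direct calculation analogous to the one in Lemma~\ref{le5-3}, but using the defining equation (\ref{eq1-3}) for $\Phi_{l,1}$ with nonlinearity $f_1$ rather than $f_2$, will give
\[
\overline V_t \ge d\int_{\overline g(t)}^{\infty} J(x-y)\overline V(t,y)\,{\rm d}y - d\,\overline V(t,x) - \nu \overline V_x + f(\overline V),
\]
where the passage from $f_1$ to $f$ is justified because $\overline V\le 1+\epsilon$ and $f_1\ge f$ on $[0,1+\epsilon]$. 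Similarly, the identity $c_{l,1}=\mu\int_{-\infty}^{0}\int_{0}^{\infty}J(x-y)\Phi_{l,1}(x)\,{\rm d}y\,{\rm d}x$ yields, after the change of variables $x\mapsto x+c_{l,1}t+\tilde x$,
\[
-\overline g'(t)=c_{l,1}\ge \mu\int_{\overline g(t)}^{\infty}\int_{-\infty}^{\overline g(t)} J(x-y)\overline V(t,x)\,{\rm d}y\,{\rm d}x,
\]
so $(\overline V,\overline g)$ satisfies the differential inequalities of an upper solution on the left.

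Finally, applying the comparison principle (Lemma~\ref{th3-3.3} in its symmetric form for the left boundary), I obtain $g(t+\tilde T)\ge \overline g(t) = -c_{l,1}t-\tilde x$, hence $-g(t+\tilde T)/t \le c_{l,1}+\tilde x/t$, giving $\limsup_{t\to\infty}-g(t)/t \le c_{l,1}$. The main technical obstacle I anticipate is not any single step but verifying carefully that the sign reversal in the advection term, together with the correct choice of the composition $-x-c_{l,1}t-\tilde x$ (rather than $x-c_{l,1}t-\tilde x$), produces the right inequalities on both the interior equation and the leftward free-boundary flux condition; once the calculus of the change of variables is done cleanly, the rest is parallel to Lemma~\ref{le5-2}. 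Combining with Lemma~\ref{le5-3} and letting $\epsilon\to 0^+$ via Proposition~\ref{prop1-2} then delivers $\lim_{t\to\infty} -g(t)/t = c_l^*$, completing the asymptotic spreading speed for the leftward front.
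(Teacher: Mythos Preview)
Your proposal is correct and follows essentially the same approach as the paper's own proof: the paper likewise uses the ODE bound to find $\hat T$ with $u(t,x)\le 1+\epsilon/2$, then defines $\overline V(t,x)=\Phi_{l,1}(-x-c_{l,1}t-\hat x)$ and $\overline g(t)=-c_{l,1}t-\hat x$, verifies the differential inequality via the semi-wave equation with $f_1\ge f$, checks the left free-boundary flux condition using $c_{l,1}=\mu\int_{-\infty}^0\int_0^\infty J(x-y)\Phi_{l,1}(x)\,{\rm d}y\,{\rm d}x$, and concludes by comparison. Your identification of the sign reversal $\Phi_{l,1}(-x-\cdot)$ as the key to matching the advection term is exactly the mechanism the paper relies on.
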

\begin{proof}
	For the following problem
	\begin{equation}
	\begin{cases}
		\hat u^{\prime}(t)=f(\hat u), & t>0,
		\\ ~\hat 
		u(0)=\|u_0\|_{\infty},
	\end{cases}
	\end{equation}
	we can get $ u
	\leq \hat u$, which implies for any $\epsilon>0,$
	there is $\hat T>0$ such that $$u(t,x)\leq 1+\epsilon/2,~\text{for}~t\geq\hat 
	T,~x\in [g(t),\infty).$$
	
	In view that $\left({c}_{l,1}, \Phi_{l,1}(x)\right)$ is a 
	solution of problem $(\ref{eq1-3})$ with $f$ replaced by ${f}_{1}$ and 
	$\Phi_{l,1}(-\infty)=1+\epsilon$. Hence there exists 
	$\hat{x}>-g(\hat{T})$ large such that
	$$
	u(\hat{T}, x) \leq 
	1+\epsilon/2<\Phi_{l,1}(-x-\hat{x}),~\text { for } 
	x \in[g(\hat{T}),\infty).
	$$
	Let
	$$
	\overline V(t, x):=\Phi_{l,1}\left(-x-{c}_{l,1} 
	t-\hat{x}\right), ~\overline g(t)=-{c}_{l,1} 
	t-\hat{x}, ~ \text { for } t>0, ~x \in\left[-{c}_{l,1} 
	t-\hat{x},\infty\right),
	$$
	then we have 
	\begin{equation}
		\begin{aligned}
			\overline  
			V_t=&-{c}_{l,1}{\Phi_{l,1}^{\prime}}\left(-x-{c}_{l,1}t-\hat{x}\right)
			\\=& \int_{-\infty}^{0} {J}\left(-x-{c}_{l,1}t-\hat{x}-y\right)  
			\Phi_{l,1}(y) \mathrm{d} y-d~  
			\Phi_{l,1}\left(-x-{c}_{l,1}t-\hat{x}\right)+\nu 
			{\Phi_{l,1}^{\prime}}\left(-x-{c}_{l,1}t-\hat x\right)+f_1
			\left(\Phi_{l,1}(-x-{c}_{l,1}t-\hat x)\right)
			\\=& \int_{-{c}_{l,1}t-\hat{x}}^{\infty} {J}\left(x-y\right)  
			\Phi_{l,1}\left(-y-{c}_{l,1}t-\hat{x}\right) \mathrm{d} y-d ~ 
			\Phi_{l,1}\left(-x-{c}_{l,1}t-\hat{x}\right)+\nu 
			{\Phi_{l,1}^{\prime}}\left(-x-{c}_{l,1}t-\hat{x}\right)+f_1
			(\Phi_{l,1}\left(-x-{c}_{l,1}t-\hat{x})\right)
			\\\geq&\int_{-{c}_{l,1}t-\hat{x}}^{\infty} {J}\left(x-y\right)  
			\overline  V\left(t,y\right) \mathrm{d} y-d ~ \overline  V(t,x)-\nu 
			\overline  V_x\left(t,x\right)+f
			\left(\overline  V(t,x)\right).
		\end{aligned}
	\end{equation}	
	Moreover,
	\begin{equation}
		\begin{aligned}
		-\overline g^{\prime}(t)=c_{l,1}=&\mu\int_{-\infty}^{0} \int_{0}^{\infty} 
			J\left(x-y\right) 
			\Phi_{l,1}(x) \mathrm{d} y \mathrm{d} x
			\\=& \mu\int_{\infty}^{-{c}_{l,1}t-\hat x} 
			\int_{-{c}_{l,1}t-\hat x}^{-\infty} 
			J\left(x-y\right)
			\Phi_{l,1}(-x-{c}_{r,1}t-\hat {x}) \mathrm{d} y \mathrm{d} x
			\\=&\mu\int_{-{c}_{l,1}t-\hat x }^{\infty} 
			\int_{-\infty}^{-{c}_{l,1}t-\hat x} 
			J\left(x-y\right)
			\overline V(t,x) \mathrm{d} y \mathrm{d} x.
		\end{aligned}
	\end{equation}
	Thus 
	applying the 
	comparison principle, we have
	$$ 
	u(t+\hat{T}, x) \leq \overline V(t, x),~g(t+\hat{T}) \geq -{c}_{l,1} t-\hat{x},$$ for $t>0$ and $x 
	\in[g(t+\hat{T}),\infty)$.
	It yields 
	\begin{equation}\label{eq1-20}
		\limsup_{t \rightarrow \infty} \frac{-g(t)}{t} \leq {c}_{l,1}.
	\end{equation}
\end{proof}
\begin{thm}\label{rem1-4}
	Since $\epsilon>0$ is arbitrarily  chosen and small enough, combining 
	$(\ref{eq1-16})$ 
	with 
	$(\ref{eq1-20})$,  we have
	$$
	\lim_{t \rightarrow \infty} \frac{-g(t)}{t}=c_{l}^{*}.
	$$
\end{thm}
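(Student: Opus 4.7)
The plan is to combine the two bounds supplied by Lemma~\ref{le5-3} and Lemma~\ref{le5-4} and then pass to the limit as the auxiliary parameter $\epsilon>0$ tends to zero, using the continuity/convergence statement in Proposition~\ref{prop1-2}. Crucially, the quantity $-g(t)/t$ itself does not depend on $\epsilon$: only the comparison functions $\Phi_{l,1}$ and $\Phi_{l,2}$, and hence the speeds $c_{l,1}=c_{l,1}(\epsilon)$ and $c_{l,2}=c_{l,2}(\epsilon)$, depend on $\epsilon$. So the $\epsilon$-free number $\mathcal{L}:=\liminf_{t\to\infty}(-g(t))/t$ satisfies $\mathcal{L}\geq c_{l,2}(\epsilon)$ for every admissible $\epsilon$, and similarly $\mathcal{U}:=\limsup_{t\to\infty}(-g(t))/t$ satisfies $\mathcal{U}\leq c_{l,1}(\epsilon)$ for every such $\epsilon$.

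First I would fix an arbitrary sequence $\epsilon_n\downarrow 0^+$ (small enough so that the modified reactions $f_1,f_2$ satisfy $(\mathbf{f4})$ and the semi-wave problems used in Lemmas~\ref{le5-3}--\ref{le5-4} are solvable, which is guaranteed by Proposition~\ref{prop1-2}). For each $\epsilon_n$, Lemma~\ref{le5-3} gives $\mathcal{L}\geq c_{l,2}(\epsilon_n)$ and Lemma~\ref{le5-4} gives $\mathcal{U}\leq c_{l,1}(\epsilon_n)$. Invoking $(\ref{eq1-1-14})$ in Proposition~\ref{prop1-2}, we let $n\to\infty$ to obtain
\begin{equation*}
c_l^{*}\;=\;\lim_{n\to\infty}c_{l,2}(\epsilon_n)\;\leq\;\mathcal{L}\;\leq\;\mathcal{U}\;\leq\;\lim_{n\to\infty}c_{l,1}(\epsilon_n)\;=\;c_l^{*}.
\end{equation*}
This forces $\mathcal{L}=\mathcal{U}=c_l^{*}$, whence the limit $\lim_{t\to\infty}(-g(t))/t$ exists and equals $c_l^{*}$.

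There is essentially no hard step here: the whole analytic content sits in the two comparison lemmas (construction of lower and upper solutions from left semi-waves with shifted speeds $c_{l,2}$ and $c_{l,1}$) and in the monotonicity/continuity of the semi-wave speeds in $\epsilon$ established in Proposition~\ref{prop1-2}. The mild subtlety I would be careful about is the direction of the inequalities under the transformation $x\mapsto -x$ induced by the advection (this is why the left-front problem is governed by $c+\nu$ rather than $c-\nu$) and the fact that the admissibility window $0<\nu<\tilde c$ in $(\ref{eq1-2-2-2})$ ensures $\tilde c_l=\tilde c-\nu>0$, so that $c_{l,i}(\epsilon)$ remain strictly positive and bounded away from zero for all small $\epsilon$. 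Given these observations, the argument is a clean squeeze; the proof itself consists only of invoking $(\ref{eq1-16})$, $(\ref{eq1-20})$, and $(\ref{eq1-1-14})$, exactly parallel to Theorem~\ref{rem1-3}.
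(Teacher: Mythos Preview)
Your proposal is correct and follows exactly the same approach as the paper: the squeeze argument combining the liminf bound $(\ref{eq1-16})$ from Lemma~\ref{le5-3}, the limsup bound $(\ref{eq1-20})$ from Lemma~\ref{le5-4}, and the convergence $(\ref{eq1-1-14})$ from Proposition~\ref{prop1-2}. The paper in fact states this theorem without a separate proof (the argument is embedded in the theorem statement itself), so your write-up is simply a fuller articulation of the same one-line idea.
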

According to the above several results, now it turns to complete the proof of Theorem~$\ref{th1-1}$.
\begin{proof}[Proof of Theorem~$\ref{th1-1}$]
	According to Proposition~$\ref{prop1-2}$,
	Theorems~$\ref{rem1-3}$ and $~\ref{rem1-4}$, 
	it suffices to prove
	\begin{align*}
		0<c_{l}^*<c^*<c_{r}^*.
			\end{align*}
For the problem
\begin{equation}
	\left\{\begin{array}{l}\label{eq1-1-32}
		d  \int_{-\infty}^{0} {J}\left(x-y\right)  
		\Phi(y) \mathrm{d} y-d~  \Phi+c~ \Phi^{\prime}(x)+f
		(\Phi(x) 
		)=0, 
		\text { 
		}-\infty<x<0, \\ \Phi(-\infty)=1, ~
		\Phi(0)={0},
	\end{array}\right.
\end{equation}	
as is stated in Theorem~2.7~\cite{du2021semi},  there is a $\tilde c>0$ such that
problem $(\ref{eq1-1-32})$ admits a 
solution 
$(c,\Phi^c(x))$ for 
any $c\in (0,\tilde c)$ with $\Phi^c(x)$ is nonincreasing in $c$ for fixed 
$x\in(-\infty,0].$
And \begin{align}\label{eq2-19}
	\lim\limits_{c\rightarrow\tilde c^{-}}\Phi^c(x)=0~\text{locally uniformly 
	in}~ 
	x 
	\in (-\infty,0].
\end{align}
Further, for any $\mu>0$,
  there exists a unique $c^*=c^*(\mu)\in\left(0, 
\tilde c\right)$  such that
\begin{equation}\label{eq5-032}
c^*=\mu \int_{-\infty}^{0} \int_{0}^{\infty} J\left(x-y\right) \Phi^{c^*}(x) {\rm d} y{ 
\rm d} x.
\end{equation}
Actually, suppose that {\rm($\textbf{J}$)}, {($\mathbf{J_*}$)} and $(\mathbf{f} 
	\mathbf{4})$ hold, $c^*$ is the asymptotic spreading speed of the rightward front and the leftward front for $(\ref{eq1-1})$ without the advection term ($\nu=0$).

According to Lemma~\ref{le-05.1}, for any  $c\in(0,\tilde c_r)$, let $(c,\Phi_r^c)$ be the solution of $(\ref{eq1-2})$. Similarly, $(c,\Phi^c)$ and  $(c,\Phi_l^c)$ 
satisfy the corresponding equations, where $c\in(0,\tilde c)$ and $c\in(0,\tilde c_l)$, respectively.

Denote 
$$
\begin{aligned}
\mathscr{F}_l(c):= \int_{-\infty}^{0} \int_{0}^{\infty} J\left(x-y\right) 
\Phi_l^{c}(x) {\rm d} y{ 
	\rm d}x, & \text{ for } c\in(0, \tilde c_l), \\
\mathscr{F}(c):= \int_{-\infty}^{0} \int_{0}^{\infty} 
	J\left(x-y\right) 
\Phi^{c}(x) {\rm d} y{ 
\rm d}x, & \text{ for } c\in(0, \tilde c),\\
\mathscr{F}_r(c):= \int_{-\infty}^{0} \int_{0}^{\infty} 
J\left(x-y\right) 
\Phi_r^{c}(x) {\rm d} y{ 
\rm d}x, &\text{ for } c \in (0,\tilde c_r).
\end{aligned}
$$

Given $(\ref{eq1-3-3})$ and $(\ref{eq2-19})$, applying the Lebesgue dominated 
convergence 
theorem, we have the following facts:

~~~~(1) 	$\lim\limits_{c\rightarrow 
	{\tilde c_l}^-}\mathscr {F}_l\left(c\right)=0,~\lim\limits_{c\rightarrow {\tilde c}^-}\mathscr 
	{F}(c)=0,~ 
\lim\limits_{c\rightarrow {\tilde c_r}^-}\mathscr {F}_r(c)=0;$\vspace{0.1cm}

~~~~(2) 	$\mathscr {F}_l(c-\nu)=\mathscr 
	{F}(c)=\mathscr {F}_r(c+\nu), \text{ for } c\in(\nu,\tilde c);$\vspace{0.1cm}

~~~~(3)    $\mathscr F_l(c)$ (resp. $\mathscr F(c)$, $\mathscr 
F_r(c)$) 
is  continuous and strictly decreasing in  
$c\in(0,\tilde c_l)$ (resp. $c\in(0, \tilde c),~c\in (0,\tilde c_r)$);\vspace{0.1cm}

~~~~(4)  $\mathscr {F}_l(c)<\mathscr {F}(c)$ for $c\in (0,\tilde c_l)$ and $\mathscr {F}(c)<\mathscr 
{F}_r(c)$ for $c\in[\nu, \tilde c)$.
 \newline Therefore, $\mathscr{F}_l(c)$ (resp. $\mathscr{F}(c)$ and 
 $\mathscr{F}_r(c)$   ) contact the line  $\mathscr {F}=\frac{c}{\mu}$ at the point 
 $\left(c_l^*, \mathscr{F}_l \left(c_l^*\right)\right)$ (resp. $\left(c^*, \mathscr{F}\left(c^*\right)\right)$ and $\left(c_r^*, 
 \mathscr{F}_r\left(c_r^*\right)\right)$   ) with $c_l^*<c^*<c_r^*$. 
 
For fixed $\mu$, according to the above analysis, it can be easily seen that $c_r^*$ is strictly increasing in $\nu$ and $c_l^*$ is strictly decreasing in $\nu$, and 
$$\lim\limits_{\nu\rightarrow 0}c_r^*=\lim\limits_{\nu\rightarrow 0}c_l^*=c^*.$$
Thus, the 
 Theorem~$\ref{th1-1}$ is proved.
\end{proof}
\begin{rem}
According to Theorem~$\ref{th1-1}$, the double free boundaries of the problem 
$(\ref{eq1-1})$ move at different finite speeds as $t\rightarrow \infty$ under the effect of the advection, comparing with the non-advection case that the leftward front and the rightward front spread at the same speed $c^*$.
\end{rem}
Actually,  let $(u,g,h)$ be the solution of problem $(\ref{eq1-1})$, when the spreading 
happens, we can further show 
\begin{prop}
\[c_l^*\geq c^*-\nu \text{ and } c_r^*\leq c^*+\nu.\]
\end{prop}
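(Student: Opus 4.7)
The plan is to exploit the explicit identity
\[\mathscr F_l(c-\nu)=\mathscr F(c)=\mathscr F_r(c+\nu),\]
recorded in item $(2)$ of the proof of Theorem~\ref{th1-1}, which originates in the observation that the profile equations for $\Phi_l^{c-\nu}$, $\Phi^{c}$ and $\Phi_r^{c+\nu}$ on $(-\infty,0]$ all coincide once the advection rate is absorbed into the wave-speed parameter; uniqueness of the semi-wave then forces the three profiles to agree, and consequently the three integrals. The identity $\mathscr F(c)=\mathscr F_r(c+\nu)$ is valid for every $c\in(0,\tilde c)$ (the only requirement is that both semi-waves be well-defined), while $\mathscr F_l(c-\nu)=\mathscr F(c)$ requires in addition $c-\nu>0$.

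For the rightward bound I would evaluate the identity at $c=c^{*}\in(0,\tilde c)$ and use the defining relation $\mu\mathscr F(c^{*})=c^{*}$ to write
\[\mu\mathscr F_r(c^{*}+\nu)=\mu\mathscr F(c^{*})=c^{*}<c^{*}+\nu.\]
Thus the strictly decreasing map $c\mapsto\mu\mathscr F_r(c)$ lies strictly below the identity at the admissible point $c=c^{*}+\nu\in(\nu,\tilde c_r)$. Since, by construction in the proof of Theorem~\ref{th1-1}, $c_r^{*}$ is the unique fixed point of this map on $(0,\tilde c_r)$, strict monotonicity forces $c_r^{*}\le c^{*}+\nu$.

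For the leftward bound I would first dispose of the trivial case $c^{*}\le\nu$: there $c_l^{*}>0\ge c^{*}-\nu$ and the inequality is immediate. When $c^{*}>\nu$ the identity applies at $c=c^{*}$, giving
\[\mu\mathscr F_l(c^{*}-\nu)=\mu\mathscr F(c^{*})=c^{*}>c^{*}-\nu,\]
so the strictly decreasing map $c\mapsto\mu\mathscr F_l(c)$ lies strictly above the identity at $c=c^{*}-\nu\in(0,\tilde c_l)$. The unique fixed point $c_l^{*}$ of this map on $(0,\tilde c_l)$ must therefore satisfy $c_l^{*}\ge c^{*}-\nu$.

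The only real subtlety is keeping track of the admissible intervals on which the three semi-wave profiles (and hence the integrals $\mathscr F$, $\mathscr F_l$, $\mathscr F_r$) are defined, which is precisely why the leftward argument is split according to whether $c^{*}$ exceeds $\nu$. Once this bookkeeping is done, the inequality reduces to a one-line comparison against the already established fixed-point characterisations of $c_r^{*}$ and $c_l^{*}$, so I do not anticipate any serious obstacle and no new estimate on the semi-wave itself is required.
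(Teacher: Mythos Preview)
Your argument is correct and actually yields strict inequalities. The only delicate point is the one you flagged yourself: item~(2) in the proof of Theorem~\ref{th1-1} is stated for $c\in(\nu,\tilde c)$, which is the range on which all three functions are simultaneously defined, but the pairwise identity $\mathscr F(c)=\mathscr F_r(c+\nu)$ indeed extends to all $c\in(0,\tilde c)$ because the profile equations for $\Phi^{c}$ and $\Phi_r^{c+\nu}$ are literally the same. With that, the comparison of each strictly increasing map $\Theta(c)=c-\mu\mathscr F_\bullet(c)$ against its root goes through exactly as you describe.

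The paper takes a completely different route. Rather than working at the level of the semi-wave integrals, it returns to the free-boundary problem itself and builds an explicit supersolution
\[
\overline U(t,x)=(1+\epsilon)\,\Phi^{c^*}\!\bigl(x-\overline h(t)\bigr),\qquad \overline h(t)=(c^*+\nu)t+L,
\]
using the \emph{non-advective} semi-wave $\Phi^{c^*}$; the advection term is absorbed by the extra $\nu$ in $\overline h'(t)$, and the condition $\epsilon<\nu/c^*$ ensures the boundary inequality $\overline h'(t)>\mu\int\!\int J\,\overline U$. A comparison argument (with a first-touching-time contradiction) then gives $h(t+T)\le\overline h(t)$ and hence $c_r^*\le c^*+\nu$; the leftward bound is obtained analogously by a lower solution. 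Your approach bypasses all this PDE machinery and reduces the inequality to a one-line fixed-point comparison; it is shorter, more transparent, and immediately reveals why the inequalities are strict. The paper's approach, on the other hand, is self-contained in the sense that it does not rely on the proof of Theorem~\ref{th1-1}, and it illustrates once more how the semi-wave profile can be recycled to construct barriers for the full free-boundary dynamics.
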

\begin{proof}
Set \[\overline h(t)=\left(c^*+\nu\right)t+L,~
\overline U(t,x)=(1+\epsilon)\Phi^{c^*}\left(x-\overline h(t)\right),\]
where  $\left(c^*, \Phi^{c^*}(x)\right)$ denotes the solution pair of $(\ref{eq1-1-32})$ with $(\ref{eq5-032})$, and $L$, $\epsilon$ will be chosen later.

In view of $(\ref{eq5-12})$, $\limsup\limits_{t\rightarrow \infty} U(t,x)\leq 
1,$ there exists 
$T\gg1$ such that $$U(t+T,x)\leq 
1+\epsilon/3,~\text{for}~t\geq0,~x\in 
\left[g\left(t+T\right),h\left(t+T\right)\right].$$

Take $L$ large enough such that $\overline h(0)=L>h(T),$  and
$$\overline U(0,x)=(1+\epsilon)\Phi^{c^*}\left(x-\overline 
h(0)\right)=(1+\epsilon)\Phi^{c^*}\left(x-L\right)>\left(1+\epsilon\right)\left(1-\epsilon/3\right)>U(T,x),~ 
x\in\left[g\left(T\right),h\left(T\right)\right].$$

 Next, we prove $$\overline U_t\geq 
d\int_{g(t+T)}^{\overline 
h(t)}J\left(x-y\right)\overline U\left(t,y\right){\rm d}y-d ~\overline U(t,x)-\nu \overline 
U_x+f\left(\overline U\left(t,x\right)\right),$$
for $t>0$ and $x\in\left[g\left(t+T\right),\overline h\left(t\right)\right].$

In view of $(\textbf{f4})$, it follows $$f\left(\overline U(t,x)\right)=f\left((1+\epsilon)\Phi^{c^*}\left(x-\overline 
h(t)\right)\right)\leq (1+\epsilon)f\left(\Phi^{c^*}\left(x-\overline h(t)\right)\right).$$  
Since $\Phi(x)$ is nonincreasing,  
then direct calculations give
\begin{equation}
\begin{aligned}
	\overline U_t=&-(1+\epsilon){\Phi^{c^*}}^{\prime}\left(x-\overline h(t)\right)\left(c^*+\nu\right)\\
	=&(1+\epsilon)\left[d\int_{-
	\infty}^{\overline h(t)}J\left(x-y\right)\Phi^{c^*}\left(y-\overline h(t)\right){\rm d}y-d~ \Phi^{c^*}(x-\overline h(t))+f\left(\Phi^{c^*}(x-\overline h(t))\right)\right]\\
	-&(1+\epsilon)\nu{\Phi^{c^*}}^{\prime}\left(x-\overline h(t)\right)\\
	\geq &d\int_{-\infty}^{\overline h(t)}J\left(x-y\right)\overline U\left(t,y\right){\rm d}y-d~ \overline U(t,x)-\nu\overline U_x(t,x)+f\left(\overline U(t,x)\right)\\
	\geq&d\int_{g(t+T)}^{\overline h(t)}J\left(x-y\right)\overline U(t,y){\rm 
	d}y-d~\overline U\left(t,x\right)-\nu \overline U_x(t,x)+f\left(\overline U(t,x)\right).
\end{aligned}
\end{equation}

And if we take $\epsilon<\nu/c^*,$ then
\begin{equation}
    \begin{aligned}
    \overline h^{\prime}(t)=&c^*+\nu>c^*(1+\epsilon)\\=&
    \mu(1+\epsilon) \int_{-\infty}^{0}\int_{0}^{\infty}J\left(x-y\right)\Phi^{c^*}(x) {\rm d}y {\rm d}x
    \\\geq&\mu \int_{-\infty}^{\overline h(t)}\int_{\overline h(t)}^{\infty}J\left(x-y\right)\overline U(t,x){\rm d}y {\rm d}x
    \\\geq& \mu \int_{g(t+T)}^{\overline h(t)}\int_{\overline h(t)}^{\infty}J\left(x-y\right)\overline U(t,x){\rm d}y {\rm d}x.
    \end{aligned}
\end{equation}
Next, we aim to prove 
\begin{equation}
    \overline h(t)>h(t+T),~\overline U\left(t,x\right)>U\left(t+T,x\right),~{\rm for}~ t>0,~x\in\left[ g\left(t+T\right),h\left(t+T\right) \right].
\end{equation}
Suppose that the above inequalities do not hold for all $t>0,$ there exists a 
first time $t^*>0$ such that $ \overline h\left(t^*\right)=h\left(t^*+T\right),$ or $\overline 
h(t^*)>h(t+t^*)$ but $\overline U\left(t^*,x^*\right)=U\left(t^*+T,x^*\right),$ for some $x^*\in 
\left[g\left(t^*+T\right),h\left(t^*+T\right)\right].$

If $\overline h\left(t^*\right)=h\left(t^*+T\right),$ then 
\begin{equation}\label{e6-7}
\overline h^{\prime}\left(t^*\right)\leq h^{\prime}\left(t^*+T\right).
\end{equation}
Considering that $\overline U\left(t^*,x\right)\geq 
U\left(t^*+T, x\right),~x\in\left[g\left(t^*+T\right),h\left(t^*+T\right)\right],$ we can get that
\begin{equation}
\begin{aligned}
\overline{h}^{\prime}\left(t^{*}\right) &>\mu 
\int_{g\left(t^{*}+T\right)}^{\overline{h}\left(t^{*}\right)} \int_{\overline 
h\left(t^{*}\right)}^{\infty} J\left(x-y\right) \overline{U}\left(t^{*}, x\right){ \rm d} 
y {\rm d} x 
\\
&=\mu \int_{g\left(t^{*}+T\right)}^{h(t^*+T)} 
\int_{h\left(t^{*}+T\right)}^{\infty} J\left(x-y\right) \overline{U}\left(t^{*}, x\right) 
{ \rm d} y {\rm d} x  \\
& \geq \mu\int_{g(t^*+T)}^{h(t^*+T)} \int_{h\left(t^{*}+T\right)}^{\infty} 
J\left(x-y\right) U\left(t^{*}+T, x\right) { \rm d} y {\rm d} x  \\
&=h^{\prime}\left(t^{*}+T\right),
\end{aligned}
\end{equation}
which yields a contradiction to $(\ref{e6-7})$.

If $\overline h(t^*)>h(t+t^*)$ and 
\begin{equation}\label{e6-9}
\overline U\left(t^*,x^*\right )=U\left(t^*+T,x^*\right),
\end{equation}
since $\overline U(t,x)>0,~x=g\left(t+T\right)$ or $x=h\left(t+T\right),~t\in \left(0,t^*\right]$, and $\overline U\left(0,x\right)>U\left(T, x\right),~x\in \left[g\left(T\right),h\left(T\right)\right]$,  by comparison principle, we can obtain that 
\begin{equation}
    \overline U\left(t^*,x\right)>U\left(t^*+T, x\right),~x\in\left[g\left(t+T\right),h\left(t+T\right)\right],
\end{equation}
which contradicts $(\ref{e6-9})$.  
Thus, $$c_r^*=\limsup\limits_{t\rightarrow 
\infty}\dfrac{h(t)}{t}\leq\lim\limits_{t\rightarrow\infty}\dfrac{\overline 
h(t-T)}{t}=c^*+\nu.$$
The case about $c_l^*\geq c^*-\nu$ can be easily proved by constructing the corresponding lower solution.
\end{proof}
\begin{proof}[The proof of Theorem~$\ref{th7.8}$]
	If $J(x)$ does not satisfy
	$\left(\mathbf{J_*}\right)$, according to the Theorem~$\ref{th1-1},$ based on the Section~3.3 in~\cite{DU2022369}, we can complete the proof of this theorem by some subtle constructions. Now we only provide several important sketches. 
	Firstly,  construct a series of cut-off functions $J_n(x)$ which satisfies the condition $\left(\mathbf{J_*}\right)$. 
choose a nonnegative, even function sequence $\{J_n\}$ such that  each $J_n(x) \in C^1$ has  nonempty
compact support, and
\begin{equation}
J_n(x)\leq J_{n+1}(x)\leq J(x), \text{ and }  J_n(x)\rightarrow J(x), \text { in } L^1(\mathbb R) \text{ as } n\rightarrow \infty.
\end{equation}
where $J_n(x)= J(x)\chi_n(x)$  and $\{\chi_n\}$  is a properly smooth cut-off function sequences
such that $J_n(x)$  satisfies $\left(\mathbf{J_*}\right)$.	
	Next,  rewrite the problem $(\ref{eq1-1})$ with $J(x)$ replaced by $J_n(x)$, and we can get the spreading speed $c_n$ of the corresponding semi-wave problem.   Similar to prove Lemmas~$\ref{le5-1}$ and $\ref{le5-3}$, we can show that $$\liminf\limits_{t\rightarrow \infty} \frac{h_n(t)}{t}\geq c_{r,2}^n, \text{ and } \liminf\limits_{t\rightarrow \infty} \frac{g_n(t)}{t}\geq c_{l,2}^n.$$
 Where $c_{r,2}^n$ and $c_{l,2}^n$ satisfy the equations $(\ref{eq1-012})$ and $(\ref{eq1-3})$ with $J$ replaced by $J_n$ and $f$ replaced by $f_2$, respectively.
 In view that $J(x)$ does not satisfy $\left(\mathbf{J_*}\right)$,  it can be proved that $\lim\limits_{n\rightarrow\infty} c_{r,2}^n=\infty$ and  $\lim\limits_{n\rightarrow\infty} c_{l,2}^n=\infty$ by contradiction.   Here, we omit the detailed steps. 
\end{proof}

\begin{rem}
According to Theorems~$\ref{th1-1}$ and $\ref{th7.8},$  as the spreading occurs, the assumption $\left(\mathbf{J_*}\right)$ is a threshold condition to determine whether the spreading speed is finite or not.
\end{rem}

\section*{Acknowledges} 
This work is supported by the China Postdoctoral Science Foundation (No. 2022M710426) and the Postdoctoral Science Foundation Project of Beijing Normal University at Zhuhai.

\bibliography{refernon}

\end{document}